\newtheorem{lemma}{Lemma}[section]
\newtheorem{theorem}{Theorem}[section]
\newtheorem{remark}{Remark}[section]
\numberwithin{equation}{section}
\newcommand{\dis}{\displaystyle}
\newcommand{\D}{\mathbb{D}}
\newcommand{\R}{\mathbb{R}}
\newcommand{\comml}{[\![}
\newcommand{\commr}{]\!]}
\newcommand{\FB}{\mathbf{B}}
\newcommand{\FM}{\mathbf{M}}
\newcommand{\FP}{\mathbf{P}}
\newcommand{\FL}{\mathbf{L}}
\newcommand{\FI}{\mathbf{I}}
\newcommand{\CD}{\mathcal{D}}
\newcommand{\CE}{\mathcal{E}}
\newcommand{\CN}{\mathcal{N}}
\newcommand{\CT}{\mathcal{T}}
\newcommand{\na}{\nabla}
\newcommand{\al}{\alpha}
\newcommand{\be}{\beta}
\newcommand{\ga}{\gamma}
\newcommand{\om}{\omega}
\newcommand{\la}{\lambda}
\newcommand{\de}{\delta}
\newcommand{\si}{\sigma}
\newcommand{\pa}{\partial}
\newcommand{\ka}{\kappa}
\newcommand{\eps}{\epsilon}
\newcommand{\vth}{\vartheta}
\newcommand{\De}{\Delta}
\newcommand{\Ga}{\Gamma}
\newcommand{\lag}{\langle}
\newcommand{\rag}{\rangle}
\newcommand{\trn}{|\!|\!|}
\newcommand{\coml}{{[\![}}
\newcommand{\comr}{{]\!]}}
\begin{document}

\title[Vlasov-Poisson-Boltzmann System]{The Vlasov-Poisson-Boltzmann System for Soft Potentials}

\author[R.-J. Duan]{Renjun Duan}
\address[RJD]{Department of Mathematics, The Chinese University of Hong Kong,
Shatin, Hong Kong}
\email{rjduan@math.cuhk.edu.hk}

\author[T. Yang]{Tong Yang}
\address[TY]{Department of Mathematics, City University of Hong  Kong,
Kowloon, Hong Kong
and
School of Mathematics and Statistics, Wuhan University, P.R.~China}
\email{matyang@cityu.edu.hk}

\author[H.-J. Zhao]{Huijiang Zhao}
\address[HJZ]{School of Mathematics and Statistics, Wuhan University, P.R.~China}
\email{hhjjzhao@hotmail.com}



\begin{abstract}
An important physical model describing the dynamics of
dilute weakly ionized plasmas in the collisional kinetic theory
is the Vlasov-Poisson-Boltzmann system for which the plasma responds strongly to the
self-consistent electrostatic force.
This paper is concerned with the electron dynamics of  {kinetic plasmas} in the whole space when the
positive charged ion flow provides a spatially uniform background.  We establish the global existence and optimal convergence rates of solutions near a global Maxwellian to the Cauchy problem on the Vlasov-Poisson-Boltzmann system for  angular cutoff soft potentials with $-2\leq \ga<0$. The
main idea is to introduce a time dependent weight function in the velocity
variable to capture the singularity of the cross-section at zero
relative velocity.
\end{abstract}

\maketitle
\thispagestyle{empty}

\setcounter{tocdepth}{1}
\tableofcontents

\section{Introduction}


\subsection{Problem}

The Vlasov-Poisson-Boltzmann  (called VPB in the sequel for simplicity) system is a physical model describing the motion of
dilute weakly ionized plasmas (e.g., electrons and ions in the case of two-species) in the collisional kinetic theory, where  plasmas respond strongly to the self-consistent electrostatic force. In physics, the ion mass is usually much larger than the electron mass so that the electrons move much faster than the ions. Thus, the ions are often
described by a fixed background $n_{\rm b}(x)$ and only the electrons move rapidly. In this simplified case, the VPB system takes the form of
\begin{eqnarray}
&& \pa_t f + \xi \cdot \na_x f + \na_x\phi \cdot \na_\xi f =Q(f,f),\label{eq1}\\
&& \De_x \phi =\int_{\R^3} f\,d\xi-1,\quad \phi(x)\to 0\ \text{as}\ |x|\to \infty,\label{eq2}\\
&& f(0,x,\xi)=f_0(x,\xi).\label{eq3}
\end{eqnarray}
Here the unknown $f=f(t,x,\xi)\geq 0$ is the density distribution function of the particles located at
$x=(x_1,x_2,x_3)\in\R^3$ with velocity
$\xi=(\xi_1,\xi_2,\xi_3)\in\R^3$ at time $t\geq 0$. The potential function $\phi=\phi(t,x)$ generating the
self-consistent electric field $\na_x\phi$ in \eqref{eq1} is coupled with
$f(t,x,\xi)$ through the Poisson equation \eqref{eq2}, where $n_{\rm b}(x)\equiv 1$ is chosen to be a unit constant.

The bilinear  collision operator $Q$ acting only on the velocity variable, cf.~\cite{CIP-Book,Gl,Vi-Re},
is defined by
\begin{eqnarray}\label{def.Q}
Q(f,g)=\iint_{\R^3\times {\mathbb{S}}^2} |\xi-\xi_\ast|^\ga q_0(\theta)\Big(f(\xi_\ast')g(\xi')-f(\xi_\ast)g(\xi)\Big)\, d \om d\xi_\ast,
\end{eqnarray}
where $(\xi,\xi_\ast)$ and $(\xi',\xi_\ast')$, denoting  velocities of two particles
before and after their collisions  respectively, satisfy
\begin{eqnarray*}
\xi'=\xi-[(\xi-\xi_\ast)\cdot \om]\om,\ \
\xi_\ast'=\xi_\ast+[(\xi-\xi_\ast)\cdot \om]\om,\ \ \om\in {\mathbb{S}}^2,
\end{eqnarray*}
by the conservation of momentum and energy
\begin{equation*}
   \xi+\xi_\ast=\xi'+\xi_\ast',\quad |\xi|^2+|\xi_\ast|^2=|\xi'|^2+|\xi_\ast'|^2.
\end{equation*}
Note that the identity $|\xi-\xi_\ast|=|\xi'-\xi_\ast'|$ holds.

The function $|\xi-\xi_\ast|^\ga q_0(\theta)$ in \eqref{def.Q} is the cross-section
 depending only on $|\xi-\xi_\ast|$ and $\cos\theta=(\xi-\xi_\ast)\cdot \om/|\xi-\xi_\ast|$, and it satisfies $-3<\ga\leq 1$ and it is
assumed to satisfy the  Grad's angular cutoff assumption $0\leq q_0(\theta)\leq C |\cos \theta|$. The exponent $\gamma$ is determined by the potential of intermolecular forces, which is called the soft potential when $-3<\ga <0$, the Maxwell model when $\ga=0$ and the
hard potential when $0<\ga\leq 1$  including the hard sphere model $\ga=1$, $q_0(\theta)={\rm cont.}|\cos\theta|$. For the soft potential, the case $-2< \ga<0$ is called the moderately soft potential and $-3<\ga<-2$ very soft potential, cf.~\cite{Vi-Re}. Notice that the Coulomb potential coincides with the limit at $\ga=-3$ for which the Boltzmann collision operator should be replaced by the Landau operator under the grazing collisions \cite{Guo-VPL,SG,Vi-Re}.

It is an interesting problem to consider the nonlinear stability and convergence rates of  a spatially homogeneous steady state $\FM$  for the Cauchy problem \eqref{eq1}, \eqref{eq2}, \eqref{eq3} when initial data $f_0$ is sufficiently close to $\FM$ in a certain sense, where
$$
\FM=(2\pi )^{-3/2} e^{-|\xi|^2/2}
$$
is a normalized global Maxwellian. This problem was first solved by Guo \cite{Guo2}  for the VPB system on torus in the case of the hard sphere model. Since then, the case of non hard sphere models, i.e. $-3<\ga<1$, under the angular cutoff assumption, has remained open. In a recent work \cite{DYZ} for the VPB system over the whole space, we studied the problem for the case $0\leq \gamma\leq 1$ that includes both the Maxwell model and general hard potentials. However, the method used in \cite{DYZ} can not be applied  to the soft potentials, mainly because the collision frequency $\nu(\xi)\sim (1+|\xi|)^\ga$ with $\ga<0$ is degenerate in the large-velocity domain.

In this paper we shall study the problem in the soft potential case by further developing the approach in \cite{DYZ} with the following extra ingredients to overcome the specific mathematical difficulties in dealing with soft potentials:
\begin{itemize}
  \item  a new time-velocity-dependent  weight in the form of
 $\exp\{\lag\xi\rag^2 [q+\la/(1+t)^{\vth}]\}$ to capture the dissipation for controlling the velocity growth in the nonlinear term for non hard sphere models, and
  \item  a time-frequency/time weighted method to overcome the large-velocity degeneracy in the energy dissipation.
\end{itemize}
The approach and techniques that this paper together with \cite{DYZ} developed
can be applied to some other collisional kinetic models for the non hard sphere interaction potential when an external forcing is present.

\subsection{Main results}
To this end, as in \cite{Guo2},
set the perturbation $u=u(t,x,\xi)$ by
$$
f(t,x,\xi)-\FM = \FM^{1/2} u(t,x,\xi).
$$
Then, the Cauchy problem \eqref{eq1}-\eqref{eq3} of the VPB system is reformulated as
\begin{eqnarray}
&& \pa_t u + \xi \cdot \na_x u+ \na_x\phi \cdot \na_\xi u -\frac{1}{2} \xi \cdot \na_x\phi u-\na_x \phi \cdot \xi \FM^{1/2} =\FL u +\Ga(u,u),\label{vpb1}\\
&& \De_x \phi=\int_{\R^3} \FM^{1/2} u \,d\xi, \quad \phi(x)\to 0\ \text{as}\ |x|\to \infty,\label{vpb2}\\
&& u(0,x,\xi)=u_0(x,\xi)=\FM^{-1/2} (f_0-\FM),\label{vpb3}
\end{eqnarray}
where the linearized collision operator $\FL$ and the quadratic nonlinear term $\Ga$ are defined by
\begin{eqnarray*}
&&\FL u ={\FM^{-\frac 12}} \left\{Q\left(\FM, \FM^{1/2} u\right)+ Q\left(\FM^{1/2} u , \FM\right)\right\},\\
&& \Ga(u,u)={\FM^{-\frac 12}} Q\left(\FM^{1/2}u,\FM^{1/2}u\right),
\end{eqnarray*}
respectively. Here, notice that due to \eqref{vpb2}, $\phi$ can be determined in terms of $u$ by
\begin{equation}\label{def.phi}
    \phi(t,x)=-\frac{1}{4\pi |x|}\ast_x\int_{\R^3} \FM^{1/2} u(t,x,\xi) \,d\xi.
\end{equation}
By plugging the above formula into the  equation \eqref{vpb1} for the reformulated VPB system, one has a single evolution equation for the perturbation $u$, cf.~\cite{DY-09VPB,DS-VPB}.

Before stating our main result, we first introduce the following mixed time-velocity weight function
\begin{equation}\label{def.w}
    w_{\tau,q} (t,\xi)=\lag \xi \rag ^{\ga \tau} e^{\lag \xi \rag^2 \left[q+\frac{\la}{(1+t)^\vth}\right]},
\end{equation}
where $\tau\in \R$, $0\leq q\ll 1$, $0<\la\ll 1$, $0<\vth\leq 1/4$, and $\lag \xi \rag =(1+|\xi|^2)^{1/2}$. Notice that even though $w_{\tau,q} (t,\xi)$ depends also on parameters $\la$ and $\vth$, we skip them for notational simplicity,  and in many places we also use $q(t)$ to denote $q+\frac{\la}{(1+t)^{\vth}}$ for brevity.
For given $u(t,x,\xi)$ with the corresponding function $\phi(t,x)$ given by \eqref{def.phi}, we then define a temporal energy norm
\begin{equation}\label{def.tri}
    \trn u \trn_{N,\ell,q} (t)=\sum_{|\al|+|\be|\leq N}\left\|w_{|\be|-\ell,q}(t,\xi)\pa_\be^\al u(t)\right\|+\|\na_x \phi(t)\|_{H^N},
\end{equation}
where $N\geq 0$ is an integer, and $\ell\geq N$ is a constant.

The main result of this paper is stated as follows. Some more notations will be explained at the end of this section.

\begin{theorem}\label{thm.m}
Let $-2\leq \ga<0$, $N\geq 8$,  $\ell_0>\frac{5}{2}$,  {$\ell\geq 1+ \max\left\{N, \frac{\ell_0}{2}-\frac1\ga\right\}$} and $q(t)=q+\frac{\la}{(1+t)^{\vth}} \geq 0$ with $0\leq q\ll 1$, $0<\la \ll 1$, and $0<\vth\leq 1/4$.
Assume that $f_0=\FM +\FM^{1/2}u_0\geq 0$ and
\begin{equation}\label{ass.neu}
  \iint_{\R^3\times \R^3} \FM^{1/2} u_0\,dxd\xi=0.
\end{equation}
There are constants $\eps_0>0$, $C_0>0$
such that if
\begin{equation}\label{thm.ge.1}
    \sum_{|\al|+|\be|\leq N}\left\|{w_{|\beta|-\ell,q}}(0,\xi) \pa_\be^\al u_0\right\|+\left\|\left(1+|x|+|\xi|^{-\frac {\ga \ell_0}{2}}\right)u_0\right\|_{Z_1}\leq \eps_0,
\end{equation}
then the Cauchy problem \eqref{vpb1}, \eqref{vpb2}, \eqref{vpb3} of the VPB system admits a unique global solution $u(t,x,\xi)$ satisfying $f(t,x,\xi)=\FM+\FM^{1/2}u(t,x,\xi)\geq 0$ and
\begin{multline}\label{thm.ge.2}
   \sup_{t\geq 0} \left\{{\trn u\trn_{N,\ell,q} (t)+(1+t)^{\frac{3}{4}} \trn u \trn_{N,\ell-1,q} (t)}\right.\\
   \left.+(1+t)^{\frac{5}{4}}\left\|\na_x^2\phi (t)\right\|_{H^{N-1}}\right\} \leq C_0 \eps_0.
\end{multline}
\end{theorem}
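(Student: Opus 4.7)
The plan is to run a continuity argument for the temporal norm
\[
\CE(t) \eqdef \trn u\trn_{N,\ell,q}(t) + (1+t)^{3/4}\trn u\trn_{N,\ell-1,q}(t) + (1+t)^{5/4}\|\na_x^2\phi(t)\|_{H^{N-1}}.
\]
After a standard local existence result for \eqref{vpb1}--\eqref{vpb3} in the weighted norm, the goal is to show that under the a priori assumption $\CE(T)\leq \delta$ on $[0,T]$ one has $\CE(T)\leq C\eps_0 + C\delta^2$; picking $\eps_0$ and $\delta$ small then closes the bootstrap and yields a global solution satisfying \eqref{thm.ge.2}. The proof will rest on two pillars: a weighted energy--dissipation inequality that uses the time-velocity weight $w_{|\beta|-\ell,q}$ to compensate the soft-potential degeneracy, and a linear time-decay analysis combined with Duhamel's formula to extract the decay rates.

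For the weighted estimate, I would apply $\pa_\be^\al$ to \eqref{vpb1}, pair with $w_{|\beta|-\ell,q}^2(t,\xi)\,\pa_\be^\al u$, and sum over $|\al|+|\be|\leq N$. The standard part of $\FL$ contributes the degenerate dissipation $\|\lag\xi\rag^{\ga/2} w_{|\beta|-\ell,q}\{\FI-\FP\}\pa_\be^\al u\|^2$. The decisive ingredient is that differentiating the factor $\exp\{\lag\xi\rag^2[q+\la/(1+t)^\vth]\}$ in time generates an extra dissipation
\[
\CD^{\mathrm{ex}}(t) \sim \frac{\vartheta\la}{(1+t)^{1+\vartheta}}\sum_{|\al|+|\be|\leq N}\bigl\|\lag\xi\rag\, w_{|\beta|-\ell,q}\pa_\be^\al u\bigr\|^2,
\]
which supplies the $\lag\xi\rag^2$ growth needed to absorb the worst terms $\frac12\xi\cdot\na_x\phi\,u$, $\na_x\phi\cdot\xi\FM^{1/2}$, and the high-velocity parts of $\Ga(u,u)$ and $\na_x\phi\cdot\na_\xi u$. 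The exponent $|\beta|-\ell$ is tuned so that $\pa_\xi$-commutators hitting $\FL$ or the transport term introduce $\lag\xi\rag^\ga$ factors that are exactly compensated by the weight, which forces $\ell\geq 1+\max\{N,\ell_0/2-1/\ga\}$ as in the hypothesis. Dissipation for the macroscopic modes $(a,b,c)=\FP u$, absent from $\FL$, will be recovered by Guo's macroscopic test-function method applied to the fluid-like system derived from \eqref{vpb1}; the Poisson equation \eqref{vpb2} together with the preserved neutrality \eqref{ass.neu} promotes $a$ itself to a dissipative variable via $\De\phi=a$, which in turn dissipates $\na_x\phi$.

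The second pillar is the time-decay analysis, which I expect to be the main technical obstacle. Since $\nu(\xi)\sim\lag\xi\rag^\ga$ degenerates at infinity, the full energy is not controlled by its own dissipation and one must instead interpolate against a weaker-weighted norm that does decay. A Fourier analysis of the linearized operator of \eqref{vpb1}--\eqref{vpb2}, whose spectrum encodes the electronic plasma oscillation, yields a semigroup decay $(1+t)^{-3/4}$ in $L^2_{x,\xi}$ for zero-charge $L^1_x$-type data, with an extra $(1+t)^{-1/2}$ per spatial derivative; these rates, applied to the data in \eqref{thm.ge.1}, account for the $(1+t)^{3/4}$ and $(1+t)^{5/4}$ factors in $\CE$. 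Via Duhamel, the nonlinearity $\Ga(u,u)$ together with the force--coupling terms lose velocity weights that only the reduced-weight decay norm $\trn u\trn_{N,\ell-1,q}$ is designed to supply, while the high-velocity remainders are controlled by writing $\lag\xi\rag^2=\lag\xi\rag^{2-\ga}\cdot\lag\xi\rag^\ga$ to exploit $\CD^{\mathrm{ex}}$ in place of the missing $\CD$. The restriction $\vth\leq 1/4$ guarantees that time-convolutions of the form $\int_0^t(1+t-s)^{-5/4}(1+s)^{-3/2-\epsilon}\,ds$ remain uniformly bounded, so the decay rates close without losses. Assembling the weighted energy inequality with these Duhamel estimates gives $\CE(t)\leq C\eps_0+C\delta^2$, completing the bootstrap and establishing both global existence and the decay claimed in \eqref{thm.ge.2}.
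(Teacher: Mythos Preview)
Your overall architecture matches the paper's: a continuity argument built on (i) the Lyapunov inequality $\frac{d}{dt}\CE_{N,\ell,q}+\kappa\CD_{N,\ell,q}\le 0$ obtained from weighted energy estimates, with the time derivative of the exponential factor in $w_{|\beta|-\ell,q}$ supplying the extra $(1+t)^{-(1+\vth)}\lag\xi\rag^2$--dissipation; (ii) macroscopic dissipation via the fluid moments; (iii) linear semigroup decay in time--frequency variables; and (iv) Duhamel to close the decay of the macro part. Two points, however, are misdescribed and would mislead you if you tried to fill in details.

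First, your stated reason for $\vth\le 1/4$ is wrong: the convolution $\int_0^t(1+t-s)^{-5/4}(1+s)^{-3/2-\eps}\,ds$ is bounded by $C(1+t)^{-5/4}$ for \emph{any} $\vth$, so it constrains nothing. The actual constraint arises inside the energy estimate: to absorb $\frac12\xi\cdot\na_x\phi\,\{\FI-\FP\}u$ and $\na_x\phi\cdot\na_\xi\{\FI-\FP\}u$ by the extra dissipation $\CD^{\mathrm{ex}}$, you need $\|\na_x^2\phi\|_{H^{N-1}}\le C\de(1+t)^{-(1+\vth)}$ under the bootstrap, and the best attainable rate for $\|\na_x^2\phi\|_{H^{N-1}}$ is $(1+t)^{-5/4}$; hence $1+\vth\le 5/4$. (Relatedly, the linear term $\na_x\phi\cdot\xi\FM^{1/2}$ decays exponentially in $\xi$ and never needs $\CD^{\mathrm{ex}}$.)

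Second, the decay of the full weighted energy $\trn u\trn_{N,\ell-1,q}$ is \emph{not} obtained by Duhamel; the nonlinearity carries too many velocity weights for that. The paper instead multiplies the Lyapunov inequality at levels $\ell-1$, $\ell-\tfrac12$, $\ell$ by $(1+t)^{3/2+\eps}$, $(1+t)^{1/2+\eps}$, $1$ respectively, and uses the key comparison $\CD_{N,\tilde\ell,q}+\|(b,c)\|^2+\|\na_x\phi\|^2\ge\kappa\,\CE_{N,\tilde\ell-1/2,q}$ to telescope: the right side at each step is absorbed by the dissipation at the next-higher weight, and only the residual $\|(b,c)\|^2+\|\na_x\phi\|^2$ survives, which \emph{is} handled by the Duhamel/semigroup estimate. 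Your phrase ``interpolate against a weaker-weighted norm'' gestures at this, but without the half-step iteration the argument does not close, since $\CD_{N,\ell,q}$ by itself does not dominate any $\CE_{N,\ell',q}$ when $\ga<0$.
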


\begin{remark}
The condition \eqref{ass.neu} implies that the ionized plasma system with electrons and ions are neutral at initial time. Due to the conservation of mass for \eqref{eq1} or \eqref{vpb1}, the neutral condition \eqref{ass.neu} holds for all time $t>0$. Notice that \eqref{ass.neu} together with $\|(1+|x|)u_0\|_{Z_1}<\infty$  can be used to remove the singularity of the Poisson kernel and to induce the same time-decay rate as in the case of the  Boltzmann equation; see also Remark \ref{rem.lide}. Moreover, in \eqref{thm.ge.1}, we put the extra space-velocity weight $1+|x|+|\xi|^{-\frac {\ga\ell_0}{2}}$ on $u_0$ in $Z_1$-norm. This is necessary in the proof of Lemma \ref{lem.X} concerning the desired uniform-in-time {\it a priori} estimate.
\end{remark}

\begin{remark}
The problem of the global existence for the very soft potential case $-3<\ga<-2$ is still left open. Despite this, we shall discuss in the next subsections the main difficulty.
\end{remark}

We conclude this subsection by pointing out some possible extensions of Theorem \ref{thm.m}. First,
the arguments used in this paper can be adopted straightforwardly to deal with either the VPB system with soft potentials on torus with additional conservation laws as in \cite{Guo2} or the two-species VPB system with soft potentials as in \cite{Guo1,YY-CMP}. Next, it could be quite interesting to apply the current approach as well as techniques in \cite{Guo-L,SG} to consider the Vlasov-Poisson-Landau system \cite{Guo-VPL},  where $Q$ in \eqref{eq1} is replaced by the Landau operator
\begin{equation*}
    Q^{L}(f,f)=\na_\xi\cdot \left\{\int_{\R^3}B^L(\xi-\xi_\ast)
    [f(\xi_\ast)\na_\xi f(\xi)-f(\xi)\na_\xi f(\xi_\ast)]\,d\xi_\ast\right\},
\end{equation*}
where $B^L(\xi)$  is a non-negative matrix given by
\begin{equation*}
    B^L_{ij}(\xi)=\left(\de_{ij}-\frac{\xi_i\xi_j}{|\xi|^2}\right)|\xi|^{\ga+2},\
    \ \ga\geq -3.
\end{equation*}
Note that the soft potential for the Landau operator corresponds to the case $-3\leq \ga<-2$.

\subsection{Strategy of the proof}
We first recall several elementary properties of the linearized operator $\FL$. First of all, $\FL$ can be written as  $\FL=-\nu+ K$, where
\begin{equation*}
   \nu=\nu(\xi)\sim (1+|\xi|)^\ga
\end{equation*}
denotes the collision frequency, and $K$ is a velocity integral operator with a real symmetric integral kernel $K(\xi,\xi_\ast)$. The explicit representation of $\nu$ and $K$ will be given in the next section.
It is known that $\FL$ is non-positive, the null space of $\FL$ is given by
\begin{equation*}
    \CN={\rm span}\left\{\FM^{1/2},\xi_i\FM^{1/2}~(1\leq i\leq 3),
    |\xi|^2\FM^{1/2} \right\},
\end{equation*}
and $-\FL$ is coercive in
the sense that there is a constant $\kappa_0>0$ such that, cf.~\cite{CIP-Book,Guo-BE-s,Mo}
\begin{equation}\label{coerc}
    -\int_{\R^3}u\FL u\,d\xi\geq \ka_0\int_{\R^3}\nu(\xi)|\{\FI-\FP\}u|^2d\xi
\end{equation}
holds for $u=u(\xi)$, where $\FI$ means the identity operator and $\FP$ denotes the orthogonal projection from
$L^2_\xi$ to $\CN$. As in \cite{Guo-IUMJ,Guo-L},  for any
given any $u(t,x,\xi)$, one can write
\begin{equation*}
\left\{\begin{split}
\dis &  \FP u= \left\{a(t,x)+b(t,x)\cdot \xi+c(t,x)\left(|\xi|^2-3\right)\right\}\FM^{1/2},\\
  \dis   & a= \int_{\R^3} \FM^{1/2} u\,d\xi= \int_{\R^3} \FM^{1/2} \FP u\,d\xi,
  \\
  \dis  & b_i=\int_{\R^3}\xi_i \FM^{1/2}u\,d\xi
=\int_{\R^3} \xi_i \FM^{1/2}\FP u\,d\xi,\ \ 1\leq i\leq 3,
\\
  \dis & c= \frac{1}{6}\int_{\R^3}\left(|\xi|^2-3\right) \FM^{1/2} u\,d\xi
= \frac{1}{6}\int_{\R^3} \left(|\xi|^2-3\right) \FM^{1/2} \FP u\,d\xi.
    \end{split}\right.
\end{equation*}
so that we have the macro-micro decomposition introduced in \cite{Guo-IUMJ}
\begin{equation}\label{mm.decom}
    u(t,x,\xi)=\FP u(t,x,\xi)+\{\FI-\FP\} u(t,x,\xi).
\end{equation}
Here, $\FP u$ is called the macroscopic component of $u(t,x,\xi)$ and $\{\FI-\FP\}u$ the
microscopic component of $u(t,x,\xi)$.
For later use, one can rewrite $\FP $ as
\begin{equation*}
\left\{\begin{array}{l}
 \dis \FP u= \FP_0 u\oplus \FP_1 u,\\[3mm]
 \dis \FP_0 u=a(t,x)\FM^{1/2},\quad \\[3mm]
\dis \FP_1u= \left\{b(t,x)\cdot \xi+c(t,x)\left(|\xi|^2-3\right)\right\}\FM^{1/2},
\end{array}\right.
\end{equation*}
where $\FP_0$ and $\FP_1$ are the projectors corresponding to the
hyperbolic and parabolic parts of the macroscopic component,
respectively, cf.~\cite{DS-VPB}.

To prove Theorem \ref{thm.m}, we introduce the equivalent temporal energy functional
\begin{equation}\label{def.e}
    \CE_{N,\ell,q}(t)\sim \trn u \trn_{N,\ell,q}^2 (t),
\end{equation}
and the corresponding dissipation rate functional
\begin{multline}\label{def.ed}
   \CD_{N,\ell,q}(t) =  \sum_{|\al|+|\be|\leq N}\left\|\nu^{1/2}w_{|\be|-\ell,q}(t,\xi)\pa_\be^\al\{\FI-\FP\} u(t)\right\|^2\\
   +\frac{1}{(1+t)^{1+\vth}} \sum_{|\al|+|\be|\leq N}\left\|\lag \xi \rag w_{|\be|-\ell,q}(t,\xi)\pa_\be^\al\{\FI-\FP\} u(t)\right\|^2\\
   +\|a\|^2+\sum_{|\al|\leq N-1}\left\|\na_x\pa^\al (a,b,c)\right\|^2.
\end{multline}
We also introduce the time-weighted temporal sup-energy
\begin{multline}\label{def.x}
    X_{N,\ell,q}(t)=\sup_{0\leq s\leq t}\CE_{N,\ell,q}(s)+\sup_{0\leq s\leq t} (1+s)^{\frac{3}{2}}\CE_{N,\ell-1,q}(s)\\
    +\sup_{0\leq s\leq t} (1+s)^{\frac{5}{2}}\left\|\na_x^2\phi (s)\right\|_{H^{N-1}}^2.
\end{multline}
Here, the exact  definitions of   $\CE_{N,\ell,q}(t)$  {is given by \eqref{lem.ee.p10}} in the proof of Lemma \ref{lem.ee}.

The strategy to prove Theorem \ref{thm.m} is to obtain the uniform-in-time  estimates under the {\it a priori} assumption that $X_{N,\ell,q}(t)$ is small enough over $0\leq t<T$ for any given $T>0$. Indeed,
\begin{itemize}
  \item
  one can deduce that there is a temporal energy functional $ \CE_{N,\ell,q}(t)$ satisfying \eqref{def.e} such that
  \begin{eqnarray}
    &&\frac{d}{dt} \CE_{N,\ell,q}(t)+\kappa \CD_{N,\ell,q}(t)\leq 0,\label{est.e}
  \end{eqnarray}
  for $0\leq t<T$, where $\CD_{N,\ell,q}(t)$ is given by \eqref{def.ed}, and moreover,
  \item by combining the time-decay property of the linearized system, one can prove that
  \begin{equation}\label{est.X}
  X_{N,\ell,q}(t)\leq C\left\{\eps_{N,\ell,q}^2+X_{N,\ell,q}(t)^2\right\}
  \end{equation}
   for $0\leq t<T$, where $\eps_{N,\ell,q}$ depends only on initial data $u_0$.
\end{itemize}
\noindent Therefore, by the local existence of solutions as well as the continuity argument, $ X_{N,\ell,q}(t)$ is bounded uniformly in all time $t\geq 0$ as long as $\eps_{N,\ell,q}$ is sufficiently small, which then implies Theorem \ref{thm.m}.

We remark that this kind of strategy used here was first developed in \cite{DUYZ-CMP} for the study of the time-decay property of the linearized Boltzmann equation with external forces and was later revisited in \cite{DUY} for further generalization of the approach.

\subsection{Difficulties and ideas}
We now outline detailed ideas to carry out the above strategy with emphasize on the specific mathematical difficulties.
Before that, it is worth to pointing out that for the VPB system with soft potentials, the only result available so far is \cite{Guo-Vacuum} in which global classical solutions near vacuum are constructed.
One may expect that the work \cite{Ukai-1974} and its recent improvement \cite{UY-AA} by using the spectral analysis and the contraction mapping principle can be adapted to deal with this problem.  However, we note that when the self-induced potential force is taken into account, even for the hard-sphere interaction, with the spectral property obtained in \cite{GS-DCDS}, the spectral theory corresponding to \cite{Ukai-1974} has not been known so far, partially because the Poisson equation produces an additional nonlocal term with singular kernels.

Fortunately, the energy method recently developed in \cite{Guo-L,Guo-IUMJ} works well  in the presence of the self-induced electric field \cite{Guo2,YZ-CMP,DY-09VPB} or even electromagnetic field \cite{Guo1,St-VMB,D-1VMB}. But due to the appearance of the nonlinear term $\xi\cdot\nabla_x\phi \{{\bf I}-{\bf P}\}u$, the direct application of the coercive estimate \eqref{coerc} of the linearized collision operator ${\bf L}$ works only for the hard-sphere interaction with $\ga=1$. Indeed with a bit weaker (softer) than hard-sphere interaction, such a term is beyond the control by either the usual energy or the energy dissipation rate so that even the local-in-time solutions can not be constructed within this framework.

To overcome this difficulty, a new weighed energy method was introduced in \cite{DYZ} by the authors of this paper to deal with the VPB system with hard potentials. Such a method is based on the use of a mixed time-velocity weight function
\begin{equation}\label{def.w-hard}
    w^{hp}_\ell (t,\xi)=\lag \xi \rag ^{\frac{\ell}{2}} e^{\frac{\la |\xi|}{(1+t)^\vth}},
\end{equation}
where $\ell\in \R$, $\la>0$ and $\vth>0$ are suitably chosen constants. One of the most important ingredients is to combine the time-decay of solutions with the usual weighted energy inequalities in order to obtain the uniform-in-time  estimates.

The main purpose of this paper is  to generalize the above argument so that it can be adapted to deal with the soft potential case. Compared with the hard potential case, the main difference lies in the fact that at high velocity the dissipation is much weaker than the instant energy. Our main ideas to overcome this are explained as follows.

Firstly, similar to the hard potential case \cite{DYZ}, we introduce an exponential weight factor
$$
w^e_{\tau,q} (t,\xi)=\exp\left\{\lag \xi \rag^2 \left[q+\frac{\la}{(1+t)^\vth}\right]\right\}
$$
in the weight function $w_{\tau,q}(t,\xi)$ given by \eqref{def.w}. Notice that $q(t):=q+\la/(1+t)^{\vth}$ satisfies $0<q(t)\ll 1$ by choices of $q$, $\la$ and $\vth$ as stated in Theorem \ref{thm.m}, and hence all the estimates on $\nu$ and $K$ obtained in \cite{SG}  can still be applied with respect to the weight function $w_{\tau,q}(t,\xi)$ considered here. A key observation to use the exponential factor is based on the identity
$$
w^e_{\tau,q} (t,\xi)\pa_t g(t,x,\xi)=\pa_t\left\{w^e_{\tau,q} (t,\xi) g(t,x,\xi)\right\}+\frac{\lambda \vth\lag \xi \rag^2}{(1+t)^{1+\vth}}w^e_{\tau,q} (t,\xi) g(t,x,\xi).
$$
Thus one can gain from the second term on the right hand side of the above identity an additional second-order velocity moment $\lag \xi\rag^2$ with a compensation that the magnitude of the additional dissipation term decays in time with a rate $(1+t)^{-(1+\vth)}$. With this, the nonlinear term $\xi\cdot\nabla_x\phi \{{\bf I}-{\bf P}\}u$ can be controlled as long as the electric field $\na_x\phi$ has the time-decay rate not slower than $(1+t)^{-(1+\vth)}$. Notice that in the case of the whole space, $\|\na_x^2\phi\|_{H^{N-1}}$ as a part of the high-order energy functional decays  at most as $(1+t)^{-\frac{5}{4}}$ and thus it is natural to require $0<\vth\leq 1/4$.

Moreover, as used for hard potentials in \cite{DYZ}, the first-order velocity moment $\lag \xi\rag$ in the exponential factor of \eqref{def.w-hard} is actually enough to deal with the large-velocity growth in $\xi\cdot\nabla_x\phi \{{\bf I}-{\bf P}\}u$. The reason why we have chosen $\lag \xi\rag^2$ for soft potentials is that
one has to control another nonlinear term $\na_x\phi\cdot \na_\xi \{\FI-\FP\} u$. We shall clarify this point in more details later.

The second idea is to control the velocity $\xi$ derivative of $u$. It is well-known that $\nabla_\xi u$ may grow in time. To overcome such a difficulty, we apply the algebraic velocity weight factor, introduced in \cite{Guo-L,Guo-BE-s},
$$
w^v_{|\beta|-\ell} (\xi)=\lag \xi \rag^{\ga(|\beta|-\ell)},\quad \ell\geq |\beta|,
$$
in the weight $w_{\tau,q}(t,\xi)$ given in \eqref{def.w} with $\tau=|\be|-\ell$. This algebraic factor implies two  properties of the velocity weight: one is that $w^v_{|\beta|-\ell} (\xi) \geq 1$ holds true due to $\ell\geq |\be|$, and the other one is that the higher the order of the $\xi$-derivatives, the more negative velocity weights. The restriction $\ell\geq |\be|$, more precisely
$$
\ell-1\geq  \max\left\{N, \frac{\ell_0}{2}-\frac1\ga\right\}\geq |\be|,
$$
results from the fact that one has to use the positive-power algebraic velocity weight $w^v_{|\beta|-\ell} (\xi)$ so as to obtain the closed estimate \eqref{est.X} on $X_{N,\ell,q}(t)$, because not only the nonlinear term $\xi\cdot \na_x\phi u$ contains the first-order velocity growth but also initial data is supposed to have an extra  positive-power velocity weight $\lag \xi\rag^{-\frac{\ga\ell_0}{2}}$ in the time-decay  estimate \eqref{thm.lide.3} for the evolution operator of the linearized VPB system.

As pointed out in \cite{Guo-BE-s} for the study of the Boltzmann equation with soft potentials, the dependence of the weight $w^v_{|\beta|-\ell} (\xi)$ on $|\be|$ makes it possible to control the linear term $\xi\cdot \na_x u$ in terms of the purely $x$-derivative dissipation for the weighted energy estimate on the mixed $x$-$\xi$ derivatives. However, the algebraic velocity factor $w^v_{|\beta|-\ell} (\xi)$ produces an additional difficulty on the nonlinear term $\na_x\phi\cdot \na_\xi u$ in the presence of the self-consistent electric field for the VPB system with soft potentials. To obtain the velocity weighted derivative  estimate on such a nonlinear term, one should put an extra negative-power function $\lag \xi\rag^{\ga}$ in front of $\na_\xi \{\FI-\FP\} u$ in the  nonlinear term $\na_x\phi\cdot \na_\xi u$ so that the velocity growth $\lag \xi\rag^{-\ga}$ comes up to have a balance. Then, as long as
$$
-2\leq \ga<0,
$$
it is fortunate that the dissipation functional $\CD_{N,\ell,q}(t)$ given by \eqref{def.ed} containing the second-order moment $\lag \xi\rag^2$ can be used to control the term $\na_x\phi\cdot \na_\xi u$.

At this point, it is not known how to use  the argument of this paper to deal with the very soft potential case $-3<\ga<-2$.   However, inspired by \cite{Guo-VPL}, we remark that one possible way is to have a stronger dissipation property of the linearized collision operator ${\bf L}$, particularly the possible velocity diffusion effect. In fact, for the Boltzmann equation without angular cutoff assumption, from the coercive estimate on the linearized Boltzmann collision operator ${\bf L}$ obtained in \cite{AMUXX, GrSt}, it seems hopeful to deal with the case $-3<\ga<-2$ for such a physical situation, which will be reported in the future work.

Another ingredient of our analysis is the decay of solutions to the VPB system for soft potentials. Recall that the large-time behavior of global solutions has been studied extensively in recent years by using different approaches. One approach which usually leads to slower time-decay than in the linearized level is used in \cite{YZ-CMP} on the basis of the improved energy estimates together with functional inequalities. The method of thirteen moments and compensation functions proposed by Kawashima in  \cite{Ka-BE13} which gives the optimal time rate without using the spectral theory; see \cite{GS-TTSP} and \cite{YY-CMP} for two applications. Recently, concerning with the optimal time rate, a time-frequency analysis method has been developed in \cite{DS-VPB,DS-VMB,D-1VMB}. Precisely,  in the same spirit of \cite{Vi}, some time-frequency functionals or interactive energy functionals are constructed in \cite{DS-VPB,DS-VMB,D-1VMB} to capture the dissipation of the degenerate components of the full system.

Back to  {the nonlinear VPB system, the main difficulty of deducing the decay rates of solutions for the soft potential} is caused by the lack of a spectral gap for the linearized collision operator ${\bf L}$.  {Unlike the periodic domain \cite{SG}, we need a careful and delicate estimate on the time decay of solutions to the corresponding linearized equation in the case of the whole space $\R^3$. Following the recent work \cite{DS-VPB,DS-VMB}, our analysis} is based on the weighted energy estimates, a time-frequency analysis method, and the construction of some interactive energy functionals, which gives a new and concise proof of the decay of the solution to the linearized equation with soft potentials. Here, we should mention the recent work \cite{St-Op} by Strain. Different from this work, by starting from the inequality
\begin{equation*}
   \pa_t E_\ell(\hat{u})+ \kappa \rho(k) E_{\ell-1}(\hat{u})\leq 0,
\end{equation*}
we use a new time-frequency weighted approach together with the time-frequency splitting technique in order to deduce the time-decay estimate on the solution $u$ for the linearized VPB system; see Step 3 in the proof of Theorem \ref{thm.lide} and the identity \eqref{split.tf}.

Finally we also point out a difference between the soft potential case and hard potentials studied in \cite{DYZ} for obtaining the time-decay of the energy functional $\CE_{N,\ell,q}(t)$. The starting point is the Lyapunov inequality \eqref{est.e}. In the hard potential case, the Gronwall inequality can be directly used when the lowest-order terms $\|(a,b,c)\|^2$ and $\|\na_x\phi\|^2$ are added into $\CD_{N,\ell,q}(t)$. However, this fails for the soft potentials because $\CD_{N,\ell,q}(t)$ given by \eqref{def.ed} contains the extra factor $\nu(\xi)\sim\lag \xi\rag^\ga$ which is degenerate at large-velocity domain when $\ga<0$. To overcome this difficulty,  {we have used the time-weighted estimate on \eqref{est.e} as well as an iterative technique on the basis of the inequality
\begin{equation*}
  \CD_{N,\tilde{\ell},q}(t)+ \|(b,c)(t)\|^2+\|\na_x\phi(t)\|^2\geq   \kappa\CE_{N,\tilde{\ell}-\frac{1}{2},q}(t)
\end{equation*}
for any $\tilde{\ell}$; see Step 2 in the proof of Lemma \ref{lem.X} for details.}

Before concluding this introduction, we mention some previous results concerning the study of the VPB system in other respects, cf.~\cite{DD,GJ}, the Boltzmann equation with soft potentials,  cf.~\cite{UA-s,Ca1,Ca2}, and also the exponential rate for the Boltzmann equation  with general potentials in the collision kernel but under additional conditions on the regularity of solutions, cf.~\cite{DV}.

The rest of this paper is organized as follows. In Section \ref{sec2}, we will list some estimates on $\nu$ and $K$ proved in \cite{SG}. In Section \ref{sec3},  we will study the time-decay property of the linearized VPB system without any source. In Section \ref{sec4}, we give the  estimates on the nonlinear terms with respect to the weight function $w_{\tau,q}(t,\xi)$. In Section \ref{sec5}, we will close the {\it a priori} estimates on the solution to derive the desired inequality \eqref{est.e}. In the last section, we will prove \eqref{est.X} to conclude Theorem \ref{thm.m}.

\subsection{Notations}
Throughout this paper,  $C$  denotes
some positive (generally large) constant and $\ka$ denotes some positive (generally small) constant, where both $C$ and
$\ka$ may take different values in different places. $A\lesssim B$ means $A\leq \frac{1}{\kappa} B$ and
$A\sim B$ means $\ka A\leq B \leq \frac{1}{\ka} A$, both for a generic
constant $0<\ka<1$. For an
integer $m\geq 0$, we use $H^m_{x,\xi}$, $H^m_x$, $H^m_\xi$ to denote the
usual Hilbert spaces $H^m(\R^3_x\times\R^3_\xi)$, $H^m(\R^3_x)$,
$H^m(\R^3_\xi)$, respectively, and $L^2$, $L^2_x$, $L^2_\xi$ are
used for the case when $m=0$. When without any confusion, we use $H^m$ to denote $H^m_x$ and use $L^2$ to denote $L^2_x$
 or $L^2_{x,\xi}$. We denote $\lag \cdot,\cdot \rag$ by the inner product over $L^2_{x,\xi}$.
For $q\geq 1$, we also define the  mixed  velocity-space Lebesgue
space $Z_q=L^2_\xi(L^q_x)=L^2(\R^3_\xi;L^q(\R^3_x))$ with the norm
\begin{equation*}
\|u\|_{Z_q}=\left(\int_{\R^3}\left(\int_{\R^3}
    |u(x,\xi)|^q \,dx\right)^{2/q}d\xi\right)^{1/2},\ \ u=u(x,\xi)\in Z_q.
\end{equation*}
For
multi-indices $\al=(\al_1,\al_2,\al_3)$ and
$\be=(\be_1,\be_2,\be_3)$, we denote $\pa^{\al}_\be=\pa_x^\al\pa_\xi^\be$, that is,
$
 \pa^{\al}_\be=\pa_{x_1}^{\al_1}\pa_{x_2}^{\al_2}\pa_{x_3}^{\al_3}
    \pa_{\xi_1}^{\be_1}\pa_{\xi_2}^{\be_2}\pa_{\xi_3}^{\be_3}.
$
The length of $\al$ is $|\al|=\al_1+\al_2+\al_3$ and the length of
$\be$ is $|\be|=\be_1+\be_2+\be_3$.

\section{Preliminaries}\label{sec2}

Recall that $\FL=-\nu+K$ is defined by
\begin{equation*}
    \nu(\xi)=\iint_{\R^3\times S^2} |\xi-\xi_\ast|^\ga q_0(\theta) \FM (\xi_\ast)\,d\om
  d\xi_\ast\sim (1+|\xi|)^\ga,
\end{equation*}
and
\begin{eqnarray*}
Ku(\xi)&=&\iint_{\R^3\times S^{2}}|\xi-\xi_\ast|^\ga q_0(\theta)\FM^{1/2}(\xi_\ast)\FM^{1/2}(\xi_\ast') u(\xi')\,d\om
  d\xi_\ast \\
  &&+\iint_{\R^3\times S^{2}}|\xi-\xi_\ast|^\ga q_0(\theta)\FM^{1/2}(\xi_\ast)\FM^{1/2}(\xi') u(\xi_\ast')\,d\om
  d\xi_\ast\nonumber \\
  &&-\iint_{\R^3\times S^{2}}|\xi-\xi_\ast|^\ga q_0(\theta)\FM^{1/2}(\xi_\ast)\FM^{1/2}(\xi) u(\xi_\ast)\,d\om
  d\xi_\ast\nonumber \\
  &=&\int_{\R^3}K(\xi,\xi_\ast)u(\xi_\ast)d\xi_\ast.\nonumber
\end{eqnarray*}
We list in the following lemma velocity weighted estimates on the collision frequency $\nu(\xi)$ and the integral operator $K$ with respect to the velocity  weight function
\begin{equation*}
    w_{\tau,\tilde{q}}(\xi)=\lag \xi\rag^{\ga \tau}e^{\tilde{q}\lag \xi\rag^2},\quad \tau\in \R,\quad 0\leq \tilde{q}\ll 1.
\end{equation*}

\begin{lemma}[\cite{SG}]
Let $-3<\gamma<0$,  $\tau\in \R$, and $0\leq \tilde{q}\ll 1$. If $|\be|>0$, then for any $\eta>0$, there is $C_\eta>0$ such that
\begin{multline*}
\int_{\R^3}w_{\tau,\tilde{q}}^2(\xi) \pa_\be (\nu u)\pa_\be u\,d\xi \geq \int_{\R^3} \nu(\xi) w_{\tau,\tilde{q}}^2(\xi)
|\pa_\be u|^2\,d\xi\\
 -\eta \sum_{|\be_1|\leq |\be|}\int_{\R^3} \nu(\xi)w_{\tau,\tilde{q}}^2(\xi)|\pa_{\be_1} u|^2\,d\xi
-C_\eta \int_{\R^3} \int_{\R^3}\chi_{|\xi|\leq 2C_\eta} \lag \xi\rag^{2\ga \tau}|u|^2\,d\xi.
\end{multline*}
If $|\be|\geq 0$, then for any $\eta>0$, there is $C_\eta>0$ such that
\begin{multline}\label{lem.nuk.2}
\left|\int_{\R^3} w_{\tau,\tilde{q}}^2(\xi)\pa_\be \left(K f\right) g\,d\xi\right|\leq \left\{\eta \sum_{|\be_1|\leq |\be|} \left(\int_{\R^3}\nu(\xi) w_{\tau,\tilde{q}}^2(\xi) |\pa_{\be_1} f|^2\,d\xi\right)^{\frac{1}{2}}\right.\\
\left.+C_\eta \left(\int_{\R^3}\chi_{|\xi|\leq 2C_\eta}\lag \xi \rag^{2\ga \tau} |f|^2\,d\xi\right)^{\frac{1}{2}}\right\}\times
\left(\int_{\R^3}\nu(\xi) w_{\tau,\tilde{q}}^2(\xi) |g|^2\,d\xi\right)^{\frac{1}{2}}.
\end{multline}
\end{lemma}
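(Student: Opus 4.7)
The plan is to prove the two displayed inequalities separately, reducing each to a decomposition of velocity space into a small-velocity compact region and a large-velocity perturbative region where the soft-potential decay of $\nu$ furnishes the smallness.

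For the pointwise $\nu$-bound I would begin with the Leibniz expansion
$$
\pa_\be(\nu u)=\nu\,\pa_\be u+\sum_{0<\be_1\leq\be}C_{\be,\be_1}\,\pa_{\be_1}\nu\,\pa_{\be-\be_1}u,
$$
and use the symbol-type bound $|\pa_{\be_1}\nu(\xi)|\lesssim \lag\xi\rag^{\ga-|\be_1|}$ valid for all $|\be_1|\geq 1$, which follows by differentiating under the $\om$-integral in the definition of $\nu$ and using that $\FM(\xi_*)$ is Schwartz. The diagonal contribution is exactly $\int\nu w_{\tau,\tilde q}^2|\pa_\be u|^2 d\xi$, and each off-diagonal contribution is controlled, via Cauchy--Schwarz against the dissipation measure $\nu w_{\tau,\tilde q}^2$, by
$$
\eta\int\nu w_{\tau,\tilde q}^2|\pa_\be u|^2\,d\xi+C_\eta\int\lag\xi\rag^{\ga-2|\be_1|}w_{\tau,\tilde q}^2|\pa_{\be-\be_1}u|^2\,d\xi.
$$
The $\eta$-term is absorbed on the left, and since $|\be_1|\geq 1$ we have $\lag\xi\rag^{\ga-2|\be_1|}\leq \lag\xi\rag^{-2}\nu(\xi)$. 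Splitting $\R^3_\xi=\{|\xi|\leq 2C_\eta\}\cup\{|\xi|>2C_\eta\}$, on the exterior $\lag\xi\rag^{-2}\leq (2C_\eta)^{-2}$ is as small as we wish, producing an absorbable $\eta\sum_{|\be_1|\leq|\be|}\int\nu w_{\tau,\tilde q}^2|\pa_{\be_1}u|^2$ term; on the interior, $\nu$ and $w_{\tau,\tilde q}$ are bounded, and a standard smooth-cutoff interpolation in $H^{|\be|}_\xi(\{|\xi|\leq 2C_\eta\})$, iterated in $|\be|$, converts the local derivative norms into the desired compactly-supported weighted $L^2$-norm of $u$ itself, at the cost of enlarging $C_\eta$.

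For the $K$-estimate I would work from the kernel form $Ku(\xi)=\int_{\R^3}K(\xi,\xi_*)u(\xi_*)\,d\xi_*$. Differentiating in $\xi$ and passing the derivatives through the collisional change of variables (via the Carleman representation) reduces $\pa_\be(Kf)$ to a finite sum of operators with Grad-type kernels $K_{\be_1}$ (for $|\be_1|\leq|\be|$) satisfying
$$
|K_{\be_1}(\xi,\xi_*)|\lesssim |\xi-\xi_*|^\ga(1+|\xi-\xi_*|)^{-2}\exp\!\Big\{-c|\xi-\xi_*|^2-c\tfrac{(|\xi|^2-|\xi_*|^2)^2}{|\xi-\xi_*|^2}\Big\}.
$$
The crucial structural observation, already exploited in \cite{SG}, is that the second exponential combined with $\tilde q\ll 1$ gives the pointwise weight-ratio control $w_{\tau,\tilde q}(\xi)/w_{\tau,\tilde q}(\xi_*)\leq C$ on the effective support of $K_{\be_1}$. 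With this, a weighted Schur test against the measure $\nu(\xi)w_{\tau,\tilde q}(\xi)^2$ applies. Splitting the $\xi_*$-integration at $|\xi_*|=2C_\eta$, the exterior piece yields an operator norm which can be made $O(\eta)$ by choosing $C_\eta$ large, giving the first bracketed term of \eqref{lem.nuk.2} via Cauchy--Schwarz together with the redistribution of derivatives from the Leibniz step, while the interior piece is Hilbert--Schmidt on a compact set and produces the $\chi_{|\xi|\leq 2C_\eta}\lag\xi\rag^{2\ga\tau}$ contribution.

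The step I expect to be the main obstacle is the uniform pointwise kernel estimate on $\pa_\be K(\xi,\xi_*)$ for general $|\be|\geq 1$: the Grad representation involves an $\S^2$-integral of the singular factor $|\xi-\xi_*|^\ga$ multiplied by Gaussians evaluated at the post-collisional velocities, and one must redistribute velocity derivatives so that the resulting kernel retains both the $|\xi-\xi_*|^\ga$ singularity and enough Gaussian decay to absorb the factor $e^{\tilde q\lag\xi\rag^2}$ uniformly in $\xi,\xi_*$. This is the place where the Carleman change of variables and careful bookkeeping of the $\ga$-exponent (in particular, controlling what happens when $|\xi-\xi_*|$ is either very small or very large relative to $\lag\xi\rag$) become essential; once that pointwise bound is in hand, the weighted operator-norm estimates via Schur's test and the interior/exterior velocity split are essentially routine.
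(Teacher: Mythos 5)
The paper does not prove this lemma at all --- it is quoted verbatim from Strain--Guo \cite{SG} --- and your outline is a faithful reconstruction of the argument given there: Leibniz expansion with the symbol bounds on $\pa_\be\nu$, absorption of off-diagonal terms via the interior/exterior velocity split and local interpolation for the first inequality; and for the second, the Grad-type pointwise kernel bounds on the derivatives of $K$, the observation that the Gaussian factor $\exp\{-c(|\xi|^2-|\xi_\ast|^2)^2/|\xi-\xi_\ast|^2-c|\xi-\xi_\ast|^2\}$ dominates the ratio $w_{\tau,\tilde q}(\xi)/w_{\tau,\tilde q}(\xi_\ast)$ for $\tilde q\ll 1$, and a weighted Schur test split at $|\xi_\ast|=2C_\eta$. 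I see no gap in the structure; the only caveat is that the heavy lifting (the pointwise bounds on the differentiated kernels, which you correctly identify as the main obstacle) is precisely the content of the lemmas in \cite{SG} and would need to be carried out in full to make this self-contained.
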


For later use, let us write down the macroscopic equations of the VPB system up to third-order moments by applying the macro-micro decomposition \eqref{mm.decom} introduced in \cite{Guo-IUMJ}.
For that, as in \cite{DS-VPB}, define moment functions $\Theta_{ij}(\cdot)$ and $\Lambda_i(\cdot)$, $1\leq i,j\leq 3$, by
\begin{equation}\label{def.moment}
    \Theta_{ij}(v)=\int_{\R^3}(\xi_i\xi_j -1)\FM^{1/2} v\,d\xi,\ \ \Lambda_i(v)=\frac{1}{10}\int_{\R^3} (|\xi|^2-5)\xi_i\FM^{1/2} v\,d\xi,
\end{equation}
for any $v=v(\xi)$. Then, one can derive from \eqref{vpb1}-\eqref{vpb2} a fluid-type system of equations
\begin{equation}\label{moment.l}
    \left\{\begin{array}{l}
       \dis \pa_t a + \na_x\cdot b =0,\\[3mm]
\dis \pa_t b+ \na_x (a + 2c) + \na_x \cdot \Theta (\{\FI-\FP\} u) -\na_x\phi=\na_x\phi a,\\[3mm]
\dis \pa_t c + \frac{1}{3}\na_x\cdot b +\frac{5}{3} \na_x\cdot  \Lambda (\{\FI-\FP\} u)=\frac{1}{3} \na_x\phi \cdot b,\\[3mm]
\dis \De_x \phi =a,
    \end{array}\right.
\end{equation}
and
\begin{equation}\label{moment.h}
    \left\{\begin{array}{l}
\dis \pa_t \Theta_{ij}(\{\FI-\FP\} u)+\pa_i b_j +\pa_j b_i -\frac{2}{3} \de_{ij} \na_x\cdot b -\frac{10}{3} \de_{ij}\na_x\cdot \Lambda (\{\FI-\FP\} u)\\[3mm]
\dis \qquad\qquad =\Theta_{ij} (r+G)-\frac{2}{3}\de_{ij}\na_x\phi \cdot b,\\[3mm]
\dis \pa_t \Lambda_i (\{\FI-\FP\} u) +\pa_i c =\Lambda_i(r+G)
    \end{array}\right.
\end{equation}
with
\begin{equation*}
    r=-\xi\cdot \na_x \{\FI-\FP\}u+\FL u,\quad G=\Ga(u,u)+\frac{1}{2} \xi\cdot \na_x\phi u-\na_x\phi \cdot \na_\xi u,
\end{equation*}
where $r$ is a linear term related only to the micro component $\{\FI-\FP\} u$ and $G$ is a quadratic nonlinear term.  Here and hereafter, for simplicity,
we used $\pa_j$ to denote $\pa_{x_j}$ for each $j=1,2,3$. The above fluid-type system \eqref{moment.l}-\eqref{moment.h} plays a key role in the analysis of the zero-order energy estimate  and the dissipation of the macroscopic component $(a,b,c)$ in the case of the whole space; see \cite{Guo-IUMJ,DY-09VPB,DS-VPB,DS-VMB,D-1VMB}.

\section{Time decay for the evolution operator}\label{sec3}

Consider the linearized homogeneous equation
\begin{equation}
\pa_t u +\xi \cdot \na_x u -\na_x \phi \cdot \xi \FM^{1/2} =\FL u, \label{lvpb1}
\end{equation}
with
\begin{equation}
  \phi=-\frac{1}{4\pi |x|}\ast_x\int_{\R^3} \FM^{1/2} u(t,x,\xi) \,d\xi\to 0 \ \text{as}\ |x|\to \infty.\label{lvpb2}
\end{equation}
Given $u_0=u_0(x,\xi)$,  $e^{t B} u_0$ is  the solution to the Cauchy problem \eqref{lvpb1}-\eqref{lvpb2} with $u|_{t=0}=u_0$.  For an integer $m$, set the index $ \si_{m}$ of the time-decay rate  by
\begin{equation*}
    \si_{m}=\frac{3}{4}+\frac{m}{2},
\end{equation*}
which corresponds to the one for the case of the usual heat kernel  in three dimensions.

The main result of this section is stated as follows.

\begin{theorem}\label{thm.lide}
Set $\mu=\mu(\xi):=\lag \xi \rag^{-\frac{\ga}{2}}$. Let $-3<\ga<0$, $\ell\geq 0$, $\al\geq 0$, $m=|\al|$,  $\ell_0>2\si_m$,  and assume
\begin{equation}\label{thm.lide.1}
    \int_{\R^3}a_0\,dx=0,\quad \int_{\R^3}(1+|x|)|a_0|\,dx<\infty,
\end{equation}
and
\begin{equation}\label{thm.lide.2}
   \left\|\mu^{\ell+\ell_0}u_0\right\|_{Z_1}+\left\|\mu^{\ell+\ell_0} \pa^\al u_0\right\|<\infty.
\end{equation}
Then, the evolution operator $e^{tB}$ satisfies
\begin{multline}\label{thm.lide.3}
\left\|\mu^{\ell}\pa^\al e^{tB}u_0\right\| +\left\|\pa^\al \na_x\De_x^{-1}\FP_0 e^{tB} u_0\right\|\\
\leq C(1+t)^{-\si_{m}} \left(\left\|\mu^{\ell+\ell_0}u_0\right\|_{Z_1}+\left\|\mu^{\ell+\ell_0} \pa^\al u_0\right\|+\left\|(1+|x|)a_0\right\|_{L^1_x}\right)
\end{multline}
for any $t\geq 0$.
\end{theorem}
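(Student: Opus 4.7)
The plan is to Fourier transform \eqref{lvpb1}--\eqref{lvpb2} in $x$, turning the problem into a family of $\xi$-ODEs parametrized by frequency $k$. The Poisson coupling contributes a term proportional to $\rmi k\hat\phi(t,k)\cdot\xi\,\FM^{1/2}=-\rmi \hat a(t,k)\,k|k|^{-2}\cdot\xi\,\FM^{1/2}$, singular at $k=0$. The hypotheses $\int a_0\,dx=0$ and $(1+|x|)a_0\in L^1_x$ give $\hat a_0(0)=0$ and $\na_k\hat a_0\in L^\infty$, so $|\hat a_0(k)|\lesssim|k|$ near the origin; this effectively removes the low-frequency singularity and makes $\na_x\phi$ behave like a heat-kernel type multiplier, which is what ultimately produces the sharp $(1+t)^{-\si_m}$ rate.

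Next I would construct a family of weighted Lyapunov functionals $E_\ell(\hat u)(t,k)$ at each $k$, equivalent to $\|\mu^\ell\hat u(t,k,\cdot)\|_{L^2_\xi}^2$ plus small interactive correction terms built from the macroscopic components $(\hat a,\hat b,\hat c)$ and $\hat\phi$, modelled on the fluid system \eqref{moment.l}--\eqref{moment.h} in the spirit of the Kawashima / time-frequency approach mentioned in the introduction. Combining the coercivity \eqref{coerc}, the weighted estimates on $\nu$ and $K$ from Section \ref{sec2}, and standard hypocoercive manipulations on the macroscopic equations, these functionals satisfy a differential inequality
\begin{equation*}
\pa_t E_\ell(\hat u)(t,k) + \ka\,\rho(k)\,E_{\ell-1}(\hat u)(t,k) \leq 0, \qquad \rho(k)=\tfrac{|k|^2}{1+|k|^2}.
\end{equation*}
The decisive feature is the weight loss: $E_{\ell-1}$ is strictly weaker than $E_\ell$ because $\nu(\xi)\sim\lag\xi\rag^\ga$ is degenerate at large $|\xi|$ when $\ga<0$, so the dissipation does not dominate the energy directly.

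To convert this degenerate dissipation into time decay, I would run a time-frequency weighted iteration: multiply by $(1+t)^j$, integrate in $t$, and trade one order of weight for a factor $\rho(k)^{-1}(1+t)^{-1}$; iterating $\ell_0$ times with weight indices decreasing from $\ell+\ell_0$ down to $\ell$ yields the modewise bound
\begin{equation*}
E_\ell(\hat u)(t,k)\lesssim (1+\rho(k)t)^{-\ell_0}\,E_{\ell+\ell_0}(\hat u_0)(k).
\end{equation*}
The decay in $L^2_k$ is then obtained by splitting at the threshold $\rho(k)t\sim 1$. For $|k|^2\gtrsim(1+t)^{-1}$ the algebraic factor produces a genuine polynomial decay which combines with $\|\mu^{\ell+\ell_0}\pa^\al u_0\|$ after inserting the $(\rmi k)^\al$ coming from $\pa^\al$. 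For $|k|^2\lesssim(1+t)^{-1}$ one uses the dispersive $Z_1\to L^2_x$ heat-kernel bound made available by the removed Poisson singularity, together with $\|\mu^{\ell+\ell_0}u_0\|_{Z_1}$ and $\|(1+|x|)a_0\|_{L^1_x}$. The two regions give matching contributions of order $(1+t)^{-\si_m}$, losing $m/2$ powers per $x$-derivative. The bound on $\pa^\al\na_x\De_x^{-1}\FP_0 e^{tB}u_0$ follows by the same splitting applied to $\hat a(t,k)/|k|$, once we observe that this quantity is controlled by $E_0(\hat u)(t,k)^{1/2}/|k|$ and that the factor $1/|k|$ is absorbed using $|\hat a_0(k)|\lesssim|k|$ near $k=0$.

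The main obstacle is precisely the degenerate coercivity inequality: a direct Gronwall argument yields no polynomial-in-time decay because $E_{\ell-1}$ is genuinely weaker than $E_\ell$. The iteration-plus-time-frequency-splitting is what trades the weight budget $\ell_0$ for the sharp heat-kernel rate, and making it work uniformly in $k$ requires the interactive functional at the Fourier level to capture, in one single inequality, the hypocoercive dissipation of $\hat b$, $\hat c$, and $\hat\phi$ despite the nonlocal Poisson coupling, while carefully tracking the weight indices through every iteration.
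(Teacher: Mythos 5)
Your proposal is correct and follows essentially the same route as the paper: Fourier transform plus a hypocoercive interactive functional yielding the degenerate inequality $\pa_t E_\ell(\hat u)+\kappa\rho(k)E_{\ell-1}(\hat u)\leq 0$, a time--frequency weighted argument that trades the weight budget $\ell_0$ for the modewise decay $E_\ell(\hat u)\lesssim(1+\rho(k)t)^{-J}E_{\ell+\ell_0}(\hat u_0)$, and a final frequency splitting using the $Z_1$-norm and the neutrality/moment condition on $a_0$ to neutralize the Poisson singularity. The only cosmetic difference is that the paper multiplies once by $[1+\eps\rho(k)t]^J$ and splits the velocity domain at $\mu^2(\xi)=1+\eps\rho(k)t$ (obtaining exponent $J$ with $2\si_m<J<\ell_0$) rather than iterating $\ell_0$ times, which changes nothing in the conclusion.
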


\begin{proof}
The proof is divided by three steps.

\medskip

\noindent{\bf Step 1.} The Fourier transform of \eqref{lvpb1}-\eqref{lvpb2} gives
\begin{equation}\label{thm.lide.p1}
    \pa_t \hat{u}+i\xi\cdot k \hat{u}-ik\hat{\phi}\cdot \xi \FM^{1/2}=\FL \hat{u},
\end{equation}
with $-|k|^2\hat{\phi}=\hat{a}$. By taking the complex inner product of the above equation with $\hat{u}$, integrating it over $\R^3_\xi$ and using \eqref{coerc}, as in \cite{DS-VPB}, one has
\begin{equation}\label{thm.lide.p2}
    \pa_t \left(\|\hat{u}\|_{L^2_\xi}^2+\frac{|\hat{a}|^2}{|k|^2}\right)+ \kappa \left\|\nu^{1/2}\{\FI-\FP\} \hat{u}\right\|_{L^2_\xi}^2
    \leq 0.
\end{equation}
Furthermore, similar to derive \eqref{moment.l} and \eqref{moment.h} from the nonlinear VPB system,
one can also derive from \eqref{lvpb1}-\eqref{lvpb2} a fluid-type system of linear equations
\begin{eqnarray*}
&& \pa_t a + \na_x\cdot b =0,\\
&& \pa_t b+ \na_x (a + 2c) + \na_x \cdot \Theta (\{\FI-\FP\} u) -\na_x\phi=0,\\
&& \pa_t c + \frac{1}{3}\na_x\cdot b +\frac{5}{3} \na_x\cdot  \Lambda (\{\FI-\FP\} u)=0,\\
&& \De_x \phi =a,
\end{eqnarray*}
and
\begin{eqnarray*}
&& \pa_t \Theta_{ij}(\{\FI-\FP\} u)+\pa_i b_j +\pa_j b_i -\frac{2}{3} \de_{ij} \na_x\cdot b \\
&&\qquad\qquad\qquad\qquad-\frac{10}{3} \de_{ij}\na_x\cdot \Lambda (\{\FI-\FP\} u) =\Theta_{ij} (r),\\
&& \pa_t \Lambda_i (\{\FI-\FP\} u) +\pa_i c =\Lambda_i(r)
\end{eqnarray*}
with $r=-\xi\cdot \na_x \{\FI-\FP\}u+\FL u$, where $\Theta(\cdot)$ and $\Lambda(\cdot)$ are defined in \eqref{def.moment}. From the above fluid-type system, as in \cite{D-1VMB}, one can deduce
\begin{equation}\label{thm.lide.p3}
\pa_t \Re\, \CE^{\rm int}(\hat{u}(t,k))+ \kappa \left\{\frac{|k|^2}{1+|k|^2} \left|\widehat{(a,b,c)}\right|^2+|\hat{a}|^2\right\}
\leq C\left\|\nu^{1/2}\{\FI-\FP\} \hat{u}\right\|_{L^2_\xi}^2,
\end{equation}
where $\CE^{\rm int}(\hat{u}(t,k))$ is given by
\begin{eqnarray*}
\CE^{\rm int}(\hat{u}(t,k))&=&\frac{1}{1+|k|^2}\Bigg\{ \big(ik\hat{c}\mid \Lambda (\{\FI-\FP\} \hat{u})\big)\nonumber\\
&&\quad +\sum_{jm}\left(ik_j \hat{b}_m+i k_m \hat{b}_j-\frac{2}{3}\de_{jm} ik\cdot \hat{b}\mid \Theta_{jm}(\{\FI-\FP\} \hat{u})\right)\\
&&\quad+\kappa_1\left( ik\hat{a}\mid \hat{b} \right)\Bigg\}.\nonumber
\end{eqnarray*}
Therefore, for $0<\kappa_2\ll 1$, a suitable linear combination of \eqref{thm.lide.p2} and \eqref{thm.lide.p3} gives
\begin{multline}\label{thm.lide.p4}
\pa_t \left[\|\hat{u}\|_{L^2_\xi}^2+\frac{|\hat{a}|^2}{|k|^2}+\kappa_2\Re\, \CE^{\rm int}(\hat{u}(t,k)) \right]\\
+\kappa\left\{\left\|\nu^{1/2}\{\FI-\FP\} \hat{u}\right\|_{L^2_\xi}^2+\frac{|k|^2}{1+|k|^2} \left|\widehat{(a,b,c)}\right|^2+\left|\hat{a}\right|^2\right\}\leq 0.
\end{multline}
Here, notice that since
\begin{equation*}
   \left|\CE^{\rm int}(\hat{u}(t,k))\right|\leq C\left\{\left\|\{\FI-\FP\}\hat{u}\right\|_{L^2_\xi}^2+\left|\widehat{(a,b,c)}\right|^2\right\},
\end{equation*}
we have taken $\kappa_2>0$ small enough such that
\begin{equation}\label{thm.lide.p4-1}
    \|\hat{u}\|_{L^2_\xi}^2+\frac{|\hat{a}|^2}{|k|^2}+\kappa_2\Re\, \CE^{\rm int}(\hat{u}(t,k)) \sim \|\hat{u}\|_{L^2_\xi}^2+\frac{|\hat{a}|^2}{|k|^2}.
\end{equation}

\medskip

\noindent{\bf Step 2.} Applying $\{\FI-\FP\}$ to \eqref{thm.lide.p1}, we have
\begin{equation*}
   \pa_t \{\FI-\FP\}\hat{u}+i\xi\cdot k \{\FI-\FP\}\hat{u}=\FL \{\FI-\FP\}\hat{u} +\FP i\xi\cdot k\hat{u}-i\xi\cdot k \FP\hat{u}.
\end{equation*}
By further taking the complex inner product of the above equation with $\mu^{2\ell}(\xi)\{\FI-\FP\}\hat{u}$ and integrating it over $\R^3_\xi$, one has
\begin{multline}\label{thm.lide.p5}
\frac{1}{2}\pa_t \left\|\mu^\ell\{\FI-\FP\}\hat{u}\right\|_{L^2_\xi}+\kappa \left\|\mu^{\ell-1}\{\FI-\FP\}\hat{u}\right\|_{L^2_\xi}^2\\
\leq \Re\,\int_{\R^3}\left( K \{\FI-\FP\}\hat{u}\mid \mu^{2\ell}(\xi)\{\FI-\FP\}\hat{u}\right)\,d\xi \\
+\Re\,\int_{\R^3}\left( \FP i\xi\cdot k\hat{u}-i\xi\cdot k \FP\hat{u}\mid \mu^{2\ell}(\xi)\{\FI-\FP\}\hat{u}\right)\,d\xi.
\end{multline}
It follows from \eqref{lem.nuk.2} that for any $\eta>0$,
\begin{multline}\label{thm.lide.p5-1}
\left|\int_{\R^3}\left( K \{\FI-\FP\}\hat{u}\mid \mu^{2\ell}(\xi)\{\FI-\FP\}\hat{u}\right)\,d\xi\right|\\
\leq \left\{\eta  \left\|\nu^{1/2}\mu^{\ell}\{\FI-\FP\}\hat{u}\right\|_{L^2_\xi}
+C_\eta \left\|\chi_{|\xi|\leq 2C_\eta}\lag \xi\rag^{-\frac{\ga \ell}{2}}\{\FI-\FP\}\hat{u}\right\|\right\}\\
\times
\left\|\nu^{1/2}\mu^{\ell}\{\FI-\FP\}\hat{u}\right\|,
\end{multline}
which further by the Cauchy-Schwarz inequality implies
\begin{multline*}
\left|\int_{\R^3}\left( K \{\FI-\FP\}\hat{u}\mid \mu^{2\ell}(\xi)\{\FI-\FP\}\hat{u}\right)\,d\xi\right|\\
\leq \eta  \left\|\nu^{1/2}\mu^{\ell}\{\FI-\FP\}\hat{u}\right\|_{L^2_\xi}^2
+C_\eta \left\|\nu^{1/2}\{\FI-\FP\}\hat{u}\right\|^2.
\end{multline*}
Notice that over $|k|\leq 1$,
\begin{multline*}
\Re\,\int_{\R^3}\left( \FP i\xi\cdot k\hat{u}-i\xi\cdot k \FP\hat{u}\mid \mu^{2\ell}(\xi)\{\FI-\FP\}\hat{u}\right)\,d\xi\,\chi_{|k|\leq 1}\\
\leq C\left(\left\|\nu^{1/2}\{\FI-\FP\}\hat{u}\right\|_{L^2_\xi}^2+\frac{|k|^2}{1+|k|^2}\left|\widehat{(a,b,c)}\right|^2\right).
\end{multline*}
Plugging the above two inequalities into \eqref{thm.lide.p5} and fixing a  small enough constant $\eta>0$ give
\begin{multline}\label{thm.lide.p6}
\pa_t \left\|\mu^\ell\{\FI-\FP\}\hat{u}\right\|_{L^2_\xi}\,\chi_{|k|\leq 1}+\kappa \left\|\mu^{\ell-1}\{\FI-\FP\}\hat{u}\right\|_{L^2_\xi}^2\,\chi_{|k|\leq 1}\\
\leq C\left(\left\|\nu^{1/2}\{\FI-\FP\}\hat{u}\right\|_{L^2_\xi}^2+\frac{|k|^2}{1+|k|^2}\left|\widehat{(a,b,c)}\right|^2\right).
\end{multline}

To obtain the velocity-weighted estimate for the pointwise time-frequency variables over $|k|\geq 1$, we directly take the complex inner product of \eqref{thm.lide.p1} with $\mu^{2\ell}(\xi)\hat{u}$ and integrate it over $\R^3_\xi$ to get that
\begin{multline}\label{thm.lide.p7}
\frac{1}{2}\pa_t \left\|\mu^\ell\hat{u}\right\|_{L^2_\xi}^2+\left\|\nu^{1/2}\mu^{\ell}\{\FI-\FP\}\hat{u}\right\|_{L^2_\xi}^2\\
= -\Re\,\int_{\R^3}\left( \nu(\xi)\{\FI-\FP\}\hat{u}\mid \mu^{2\ell}(\xi)\FP \hat{u}\right)\,d\xi+\Re\,\int_{\R^3}\left( K\{\FI-\FP\} \hat{u}\mid \mu^{2\ell}(\xi)\hat{u}\right)\,d\xi\\
+\Re\,\int_{\R^3}\left( ik\hat{\phi}\cdot \xi \FM^{1/2}\mid \mu^{2\ell}(\xi)\hat{u}\right)\,d\xi.
\end{multline}
Here, Cauchy-Schwarz's inequality implies
\begin{equation*}
 -\Re\,\int_{\R^3}\left( \nu(\xi)\{\FI-\FP\}\hat{u}\mid \mu^{2\ell}(\xi)\FP \hat{u}\right)\,d\xi
 \leq C\left(\left\|\nu^{1/2}\{\FI-\FP\}\hat{u}\right\|_{L^2_\xi}^2+\left|\widehat{(a,b,c)}\right|^2\right),
\end{equation*}
and
\begin{multline*}
\Re\,\int_{\R^3}\left( ik\hat{\phi}\cdot \xi \FM^{1/2}\mid \mu^{2\ell}(\xi)\hat{u}\right)\,d\xi\\
=\Re\,\int_{\R^3}\left( ik\hat{\phi}\cdot \xi \FM^{1/2}\mid \mu^{2\ell}(\xi)[\FP\hat{u}+\{\FI-\FP\}\hat{u}]\right)\,d\xi\\
\leq C\left\{ \left\|\nu^{1/2}\{\FI-\FP\} \hat{u}\right\|_{L^2_\xi}^2+\left|\widehat{(a,b,c)}\right|^2+\frac{|\hat{a}|^2}{|k|^2}\right\},
\end{multline*}
where $|k|^2|\hat{\phi}|^2=|\hat{a}|^2/|k|^2$ due to the Poisson equation $-|k|^2\hat{\phi}=\hat{a}$ was used. From  \eqref{lem.nuk.2}, similar to the proof of \eqref{thm.lide.p5-1}, one has
\begin{multline*}
\Re\,\int_{\R^3}\left( K\{\FI-\FP\} \hat{u}\mid \mu^{2\ell}(\xi)\hat{u}\right)\,d\xi\\
=\Re\,\int_{\R^3}\left( K\{\FI-\FP\} \hat{u}\mid \mu^{2\ell}(\xi)[\FP\hat{u}+\{\FI-\FP\}\hat{u}]\right)\,d\xi\\
\leq \eta \left\|\nu^{1/2}\mu^{\ell}\{\FI-\FP\} \hat{u}\right\|_{L^2_\xi}^2
 +C_\eta\left(\left\|\nu^{1/2}\{\FI-\FP\}\hat{u}\right\|_{L^2_\xi}^2+\left|\widehat{(a,b,c)}\right|^2\right).
\end{multline*}
By plugging the above two estimates into \eqref{thm.lide.p7}, fixing a small constant $\eta>0$ and using $1\leq \frac{2|k|^2}{1+|k|^2}\chi_{|k|\geq 1}$, it follows that
\begin{multline}\label{thm.lide.p8}
\pa_t \left\|\mu^\ell\hat{u}\right\|_{L^2_\xi}^2\chi_{|k|\geq 1}+\kappa \left\|\mu^{\ell-1}\{\FI-\FP\}\hat{u}\right\|_{L^2_\xi}^2\chi_{|k|\geq 1}\\
\leq C\left(\left\|\nu^{1/2}\{\FI-\FP\}\hat{u}\right\|_{L^2_\xi}^2+\frac{|k|^2}{1+|k|^2}\left|\widehat{(a,b,c)}\right|^2\right).
\end{multline}

Now, for properly chosen constants $0<\kappa_3,\kappa_4\ll 1$, a suitable linear combination of \eqref{thm.lide.p4}, \eqref{thm.lide.p6} and \eqref{thm.lide.p8} yields that 
whenever $\ell\geq 0$,
\begin{equation}\label{ad.p1}
\pa_t E_\ell(\hat{u}) +\kappa D_\ell(\hat{u})\leq 0,
\end{equation}
holds true for any $t\geq 0$, $k\in \R^3$,
where $E_\ell(\hat{u})$ and $D_\ell(\hat{u})$ are given by
\begin{eqnarray*}
   E_\ell(\hat{u})&=&\|\hat{u}\|_{L^2_\xi}^2+\frac{|\hat{a}|^2}{|k|^2}+\kappa_2\Re\, \CE^{\rm int}(\hat{u}(t,k))\\
   && +\kappa_3\left\|\mu^\ell\{\FI-\FP\}\hat{u}\right\|_{L^2_\xi}\,\chi_{|k|\leq 1}
    +\kappa_4 \left\|\mu^\ell\hat{u}\right\|_{L^2_\xi}^2\chi_{|k|\geq 1},\nonumber\\
   D_\ell(\hat{u})&=& \left\|\mu^{\ell-1}\{\FI-\FP\} \hat{u}\right\|_{L^2_\xi}^2+\frac{|k|^2}{1+|k|^2} \left|\widehat{(a,b,c)}\right|^2+\left|\hat{a}\right|^2.
\end{eqnarray*}
Due to \eqref{thm.lide.p4-1} and the fact that $\FP \hat{u}$ decays exponentially in $\xi$, it is straightforward to check that for $\ell\geq 0$,
\begin{equation*}
    E_\ell(\hat{u})\sim \left\|\mu^\ell\hat{u}\right\|_{L^2_\xi}^2+ \frac{|\hat{a}|^2}{|k|^2}.
\end{equation*}

\medskip

\noindent{\bf Step 3.} 
To the end, set
\begin{equation*}
\rho(k)=\frac{|k|^2}{1+|k|^2}.
\end{equation*}
Let $0<\eps\leq 1$   and $J> 0$  be chosen later. Multiplying \eqref{ad.p1} by $[1+\eps\rho (k)t]^J$,
\begin{equation}
\label{ad.p2}
\frac{d}{dt}\{[1+\eps\rho (k)t]^J E_{\ell}(\hat{u})\}+\kappa [1+\eps\rho (k)t]^J D_{\ell}(\hat{u})\leq J[1+\eps\rho (k)t]^{J-1} \eps \rho(k) E_{\ell}(\hat{u}).
\end{equation}
To estimate the right-hand term, we bound $E_\ell(\hat{u})$ in the way that
\begin{eqnarray*}
E_\ell(\hat{u}) &\sim& \left\|\mu^\ell\hat{u}\right\|_{L^2_\xi}^2+ \frac{|\hat{a}|^2}{|k|^2}\\
&=& \left\|\mu^\ell\hat{u}\right\|_{L^2_\xi}^2(\chi_{|k|\leq 1}+\chi_{|k|>1})+ \frac{|\hat{a}|^2}{|k|^2}\\
&\lesssim&(\left\|\mu^\ell\{\FI-\FP\}\hat{u}\right\|_{L^2_\xi}^2+\left\|\mu^\ell\FP\hat{u}\right\|_{L^2_\xi}^2)\chi_{|k|\leq 1}+\left\|\mu^\ell\hat{u}\right\|_{L^2_\xi}^2\chi_{|k|\geq 1}+ \frac{|\hat{a}|^2}{|k|^2}\\
&\lesssim&\left\|\mu^\ell\{\FI-\FP\}\hat{u}\right\|_{L^2_\xi}^2\chi_{|k|\leq 1}+\left\|\mu^\ell\hat{u}\right\|_{L^2_\xi}^2\chi_{|k|\geq 1}
+ |\widehat{(a,b,c)}|^2+ \frac{|\hat{a}|^2}{|k|^2}.
\end{eqnarray*}
That is,
\begin{eqnarray*}
&&E_\ell(\hat{u})\lesssim E^{I}_\ell(\hat{u})+E^{I\!I}(\hat{u}),\\
&&E^{I}_\ell(\hat{u})=\left\|\mu^\ell\{\FI-\FP\}\hat{u}\right\|_{L^2_\xi}^2\chi_{|k|\leq 1}+\left\|\mu^\ell\hat{u}\right\|_{L^2_\xi}^2\chi_{|k|\geq 1},\\
&&E^{I\!I}(\hat{u})= |\widehat{(a,b,c)}|^2+ \frac{|\hat{a}|^2}{|k|^2}.
\end{eqnarray*}
Therefore, for the right-hand term of \eqref{ad.p2}, one has
\begin{eqnarray*}
&\dis J[1+\eps\rho (k)t]^{J-1} \eps \rho(k) E_{\ell}(\hat{u})\\
&\dis \lesssim J[1+\eps\rho (k)t]^{J-1} \eps \rho(k)E^{I}_\ell(\hat{u})+J[1+\eps\rho (k)t]^{J-1} \eps \rho(k)E^{I\!I}(\hat{u})
:=R_1+R_2.
\end{eqnarray*}
We estimate $R_1$ and $R_2$ as follows. First, for $R_2$, 
\begin{eqnarray*}
R_2&=&J[1+\eps\rho (k)t]^{J-1} \eps \rho(k)E^{I\!I}(\hat{u})\\
&=&J[1+\eps\rho (k)t]^{J-1} \eps \rho(k)( |\widehat{(a,b,c)}|^2+ \frac{|\hat{a}|^2}{|k|^2})\\
&=&\eps J[1+\eps\rho (k)t]^{J-1} \left\{\frac{|k|^2}{1+|k|^2} \left|\widehat{(a,b,c)}\right|^2+\frac{1}{1+|k|^2} \left|\hat{a}\right|^2\right\}\\
&\leq &\eps J[1+\eps\rho (k)t]^{J}  \left\{\frac{|k|^2}{1+|k|^2} \left|\widehat{(a,b,c)}\right|^2+\frac{ \left|\hat{a}\right|^2}{|k|^2}\right\}\\
&\leq &\eps J [1+\eps\rho (k)t]^{J}D_\ell(\hat{u}).
\end{eqnarray*}
Thus, one can let
\begin{equation*}
\eps J<\frac{\kappa}{4}
\end{equation*}
where $\kappa>0$ is given in \eqref{ad.p1}, such that
\begin{equation*}
R_2\leq \frac{\kappa}{4} [1+\eps\rho (k)t]^{J}D_\ell(\hat{u}).
\end{equation*}
For $R_1$, we use the splitting
\begin{eqnarray}\label{split.tf}
1= \chi_{\mu^2(\xi)\leq [1+\eps \rho(k)t]}+ \chi_{\mu^2(\xi)>  [1+\eps \rho(k)t]},
\end{eqnarray}
so that
\begin{equation*}
  E_\ell^I(\hat{u})=   E_\ell^I(\hat{u} \chi_{\mu^2(\xi)\leq [1+\eps \rho(k)t]}) +  E_\ell^I(\hat{u}\chi_{\mu^2(\xi)>  [1+\eps \rho(k)t]}):= E_\ell^{I<}(\hat{u})+ E_\ell^{I>}(\hat{u})
\end{equation*}
with
\begin{eqnarray*}
E_\ell^{I<}(\hat{u}) & = & \int_{\mu^2(\xi)\leq [1+\eps \rho(k)t]} \mu^{2\ell}(\xi) |\{\FI-\FP\}\hat{u}|^2\,d\xi\chi_{|k|\leq 1} \\
&&+ \int_{\mu^2(\xi)\leq [1+\eps \rho(k)t]} \mu^{2\ell}(\xi) |\hat{u}|^2\,d\xi\chi_{|k|\geq 1}, \\
E_\ell^{I>}(\hat{u})& = & \int_{\mu^2(\xi)\geq [1+\eps \rho(k)t]} \mu^{2\ell}(\xi) |\{\FI-\FP\}\hat{u}|^2\,d\xi\chi_{|k|\leq 1} \\
&&+ \int_{\mu^2(\xi)\geq [1+\eps \rho(k)t]} \mu^{2\ell}(\xi) |\hat{u}|^2\,d\xi\chi_{|k|\geq 1}.
\end{eqnarray*}
Thus, with this splitting,  $R_1$ is bounded by
\begin{eqnarray*}
R_1&=&J[1+\eps\rho (k)t]^{J-1} \eps \rho(k)E^{I}_\ell(\hat{u})\\
&\leq &J[1+\eps\rho (k)t]^{J-1} \eps \rho(k)\{E_\ell^{I<}(\hat{u}) +E_\ell^{I>}(\hat{u})\}\\
&\leq& \eps J [1+\eps\rho (k)t]^{J}\rho(k) E_{\ell-1}^{I<}(\hat{u})\\
&&+J[1+\eps\rho (k)t]^{J-1} \eps \rho(k) E_\ell^{I>}(\hat{u}):=R_{11}+R_{12},
\end{eqnarray*}
where we have used
$$
E^{I<}_\ell(\hat{u})\leq [1+\eps \rho(k)t] E_{\ell-1}^{I<}(\hat{u}).
$$
For $R_{11}$, notice that
\begin{eqnarray*}
\rho(k) E_{\ell-1}^{I<}(\hat{u})&\leq &\rho(k) (\left\|\mu^{\ell-1}\{\FI-\FP\}\hat{u}\right\|_{L^2_\xi}^2\chi_{|k|\leq 1}+\left\|\mu^{\ell-1}\hat{u}\right\|_{L^2_\xi}^2\chi_{|k|\geq 1})\\
&\lesssim&\left\|\mu^{\ell-1}\{\FI-\FP\}\hat{u}\right\|_{L^2_\xi}^2+\rho(k) \left|\widehat{(a,b,c)}\right|^2\\
&\leq & C_1 D_{\ell}(\hat{u})
\end{eqnarray*}
for a generic constant $C_1\geq 1$, and hence
\begin{equation*}
R_{11}\leq C_1\eps J [1+\eps\rho (k)t]^{J}D_\ell(\hat{u}).
\end{equation*}
One can make $\eps J$ further small such that 
\begin{equation*}
C_1\eps J \leq \frac{\kappa}{4},
\end{equation*}
and hence
\begin{equation*}
R_{11}\leq\frac{\kappa}{4}[1+\eps\rho (k)t]^{J}D_\ell(\hat{u}).
\end{equation*}
To estimate $R_{12}$, 
let $p>1$ be chosen later and compute
\begin{multline*}
R_{12}=J[1+\eps\rho(k)t]^{-p}\eps\rho(k)\cdot [1+\eps\rho(k)t]^{J+p-1}  E_\ell^{I>}(\hat{u})\\
  \leq  J [1+\eps \rho(k)t]^{-p}\eps \rho(k) \cdot E_{\ell+J+p-1}^{I>}(\hat{u}).
\end{multline*}
Noticing that the estimate
\begin{equation*}
  E_{\ell+J+p-1}^{I>}(\hat{u})\leq  E_{\ell+J+p-1}^I(\hat{u})\lesssim  E_{\ell+J+p-1}(\hat{u})\leq  E_{\ell+J+p-1}(\hat{u}_0)
\end{equation*}
holds true by \eqref{ad.p1} due to $\ell+J+p-1\geq 0$, one has
\begin{equation*}
\label{ }
R_{12}\leq  J [1+\eps \rho(k)t]^{-p}\eps \rho(k) E_{\ell+J+p-1}(\hat{u}_0).
\end{equation*}
Therefore, collecting all estimates above, it follows from \eqref{ad.p2} that
\begin{multline*}
\dis \frac{d}{dt}\{[1+\eps\rho (k)t]^J E_{\ell}(\hat{u})\}+\frac{\kappa}{2} [1+\eps\rho (k)t]^J D_{\ell}(\hat{u})\\
\dis \leq  J [1+\eps \rho(k)t]^{-p}\eps \rho(k) E_{\ell+J+p-1}(\hat{u}_0).
\end{multline*}
Integrating this inequality, using
\begin{equation*}
    \int_0^t [1+\eps \rho(k)s]^{-p}\eps\rho(k) ds\leq \int_0^\infty (1+\eta)^{-p}d\eta= C_p<\infty
\end{equation*}
for $p>1$, and noticing  $J+p-1>0$, one has
\begin{equation*}
   [1+\eps \rho(k)t]^J E_\ell(\hat{u})\leq  E_\ell(\hat{u}_0)+JC_pE_{\ell+J+p-1}(\hat{u}_0)\lesssim (1+JC_p)E_{\ell+J+p-1}(\hat{u}_0),
\end{equation*}
that is, whenever $\ell\geq 0$,
\begin{equation*}
   E_\ell(\hat{u})\lesssim   [1+\eps \rho(k)t]^{-J} E_{\ell+J+p-1}(\hat{u}_0),
\end{equation*}
for any $t\geq 0$, $k\in \R^3$, where the parameters $p$, $\eps$ and $J$ with
\begin{equation*}
p>1,\  0<\eps\leq 1, \ J>0,\  C_1\eps J \leq \frac{\kappa}{4}.
\end{equation*}
are still to be chosen.

Now, for any given $\ell_0>2\si_m$, we fix $J>2\si_m$, $p>1$ such that $\ell_0=J+p-1$ to have
\begin{equation*}
   E_\ell(\hat{u})\lesssim    [1+\eps \rho(k)t]^{-J} E_{\ell+\ell_0}(\hat{u}_0).
\end{equation*}
Since $J> 2\si_m$, in the completely same way as in \cite{DS-VPB} and \cite{DYZ} by considering the frequency integration over $\R^3_k=\{|k|\leq 1\}\cup\{|k|\geq 1\}$ with a little modification of the proof in \cite{Ka,Ka-BE13}, one can derive the desired time-decay property \eqref{thm.lide.3} under conditions \eqref{thm.lide.1} and \eqref{thm.lide.2}; the details are omitted for brevity. This completes the proof of Theorem \ref{thm.lide}.
\end{proof}

\begin{remark}\label{rem.lide}
The rate index $\si_m$ in \eqref{thm.lide.3} is optimal in the sense that it is the same as in the case of the Boltzmann equation. In fact, from \cite{Ukai-1974,UY-AA}, one has
$$
\left\|\pa^\al e^{t(-\xi\cdot \na_x +\FL)}u_0\right\|\leq C(1+t)^{-\si_{|\al|}} \left(\|u_0\|_{Z_1}+\|\pa^\al u\|\right),
$$
for any $t\geq 0$, where $\FL$ the linearized Boltzmann operator in the case $0\leq \ga\leq 1$. Here, different from the hard potential case, the extra velocity weight $\lag \xi\rag^{-\frac{\ga\ell_0}{2}}$ in \eqref{thm.lide.3}  on initial data $u_0$ for soft potentials $-3<\ga<0$ is needed. Moreover, as we mentioned before, the condition \eqref{thm.lide.1} is used to remove the singularity of the Poisson kernel and recover the optimal rate index $\si_m$, otherwise, only the rate $(1+t)^{-\frac{1}{4}+\frac{|\al|}{2}}$ can be obtained, see \cite{DS-VPB} and references therein.
\end{remark}

\section{Estimate on nonlinear terms}\label{sec4}

This section concerns some estimates on each nonlinear term in $G=\Ga(u,u)+\frac{1}{2} \xi\cdot \na_x\phi u-\na_x\phi \cdot \na_\xi u$ which will be needed in the next section. For this purpose, to simplify the notations, in the proof of all subsequent lemmas, $w_{\tau}$ is used to  denote $w_{\tau,q}(t,\xi)$. That is
\begin{equation*}
    w_{\tau}\equiv w_{\tau,q}(t,\xi)=\lag \xi \rag^{\ga \tau}e^{q(t)\lag \xi \rag^2},\quad q(t)=q+\frac{\la}{(1+t)^{\vth}},
\end{equation*}
where $0<q(t)\ll 1$ is fixed  and $\tau=|\be|-\ell\leq 0$ depending on the order of velocity derivatives may take different values in different places. Moreover, we also use in the proof
\begin{equation*}
    u_1=\FP u,\quad u_2=\{\FI-\FP\}u.
\end{equation*}

Now we turn to state the estimates on the nonlinear terms. To make the presentation clear, we divide it into several subsections. The first subsection is concerned with the
estimates on $\Ga(u,u)$.

\subsection{Estimate on $\Ga(u,u)$}

We always use the decomposition
\begin{multline}\label{ga.de}
\Ga(u,u)=\Ga(\FP u,\FP u)+\Ga(\FP u,\{\FI-\FP\}u)+\Ga(\{\FI-\FP\}u,\FP u)\\
+\Ga(\{\FI-\FP\}u,\{\FI-\FP\}u).
\end{multline}
Recall
\begin{multline*}
\Ga(f,g)=\Ga^+(f,g)-\Ga^-(f,g)\\
=\int_{\R^3}|\xi-\xi_\ast|^\ga \FM^{1/2}(\xi_\ast) \left[\int_{\mathbb{S}^2}q_0(\theta) f(\xi_\ast')g(\xi')\,d\om\right]\,d\xi_\ast\\
-g(\xi)\int_{\R^3} |\xi-\xi_\ast|^\ga \FM^{1/2}(\xi_\ast) \left[\int_{\mathbb{S}^2}q_0(\theta)\,d\om\right]\,f(\xi_\ast)\,d\xi_\ast.
\end{multline*}

Our first result in this subsection is concerned with the estimate on $\Ga(u,u)$ both without and with the weight.

\begin{lemma}\label{lem.ga1}
Let $N\geq 4$, $\ell\geq 0$, $0<q(t)\ll 1$. It holds that
\begin{multline}\label{lem.ga1.1}
\lag \Ga(u,u),u\rag \leq C\|(a,b,c)\|_{H^1}\left\|\na_x (a,b,c)\right\|\cdot \left\|\nu^{1/2}\{\FI-\FP\} u\right\|\\
+ C\left\{\left\|\na_x (a,b,c)\right\|_{H^1}+\sum_{|\al|+|\be|\leq 4} \left\|\pa^\al_\be \{\FI-\FP\} u\right\|\right\}\left\|\nu^{1/2}\{\FI-\FP\} u\right\|^2
\end{multline}
and
\begin{multline}\label{lem.ga1.2}
\left\lag \Ga(u,u),w_{-\ell,q}^2(t,\xi)\{\FI-\FP\}u\right\rag \\
\leq C\|(a,b,c)\|_{H^1}\left\|\na_x (a,b,c)\right\|\cdot \left\|\nu^{1/2}\{\FI-\FP\} u\right\|\\
+C\left\{\CE_{N,\ell,q}(t)\right\}^{1/2} \left\|\nu^{1/2}w_{-\ell,q}(t,\xi)\{\FI-\FP\} u\right\|^2.
\end{multline}

\end{lemma}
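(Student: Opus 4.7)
The plan is to apply the decomposition \eqref{ga.de} and estimate each of the four bilinear pieces separately, using Sobolev embeddings in $x$ together with the pointwise and trilinear structure of $\Ga$.

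For the unweighted inequality \eqref{lem.ga1.1}, I would first exploit the orthogonality $\lag \Ga(u,u), \FP u\rag = 0$, which holds because $\FP u$ is a linear combination of $\FM^{1/2}$, $\xi_i \FM^{1/2}$, $|\xi|^2 \FM^{1/2}$ with coefficients depending only on $(t,x)$, and because $Q$ integrated against a collision invariant vanishes. This reduces matters to controlling $\lag \Ga(u,u), \{\FI-\FP\}u\rag$. For the purely hydrodynamic piece $\Ga(\FP u, \FP u)$, one has a pointwise bound $|\Ga(\FP u, \FP u)(x,\xi)| \lesssim |(a,b,c)(x)|^2 \nu^{1/2}(\xi) e^{-c|\xi|^2}$; Cauchy-Schwarz in $\xi$ then reduces the problem to bounding $\|(a,b,c)^2\|_{L^2_x}$, which I would control via $\|(a,b,c)\|_{L^3_x}\|(a,b,c)\|_{L^6_x} \lesssim \|(a,b,c)\|_{H^1}\|\na_x (a,b,c)\|$ using the Sobolev embeddings $H^1 \hookrightarrow L^3$ and $\dot H^1 \hookrightarrow L^6$ in three dimensions. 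This produces the first term in \eqref{lem.ga1.1}. The mixed pieces $\Ga(\FP u, \{\FI-\FP\} u)$ and $\Ga(\{\FI-\FP\} u, \FP u)$ are handled by placing the hydrodynamic factor in $L^\infty_x$ (bounded by $\|\na_x(a,b,c)\|_{H^1}$ via Sobolev, avoiding the lowest-order $L^2$ norm of $(a,b,c)$ by interpolating with $\dot H^1 \hookrightarrow L^6$), and the two microscopic factors in $L^2_{x,\xi}$ with $\nu^{1/2}$ weights. Finally, the pure microscopic piece $\Ga(\{\FI-\FP\}u, \{\FI-\FP\}u)$ is controlled via the standard Strain--Guo trilinear estimate by $\|\{\FI-\FP\}u\|_{L^\infty_x L^2_\xi}\|\nu^{1/2}\{\FI-\FP\}u\|^2$, and the $L^\infty_x$ norm is bounded by $\sum_{|\al|+|\be|\leq 4}\|\pa^\al_\be \{\FI-\FP\}u\|$ via $H^2_x \hookrightarrow L^\infty_x$, which uses the hypothesis $N\geq 4$.

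For the weighted inequality \eqref{lem.ga1.2}, the test function $w_{-\ell,q}^2\{\FI-\FP\}u$ is still microscopic but now carries the $\xi$-dependent weight. Since $\FP u$ has Gaussian decay that absorbs $w_{-\ell,q}(t,\xi)$ for $0<q(t)\ll 1$, the $\Ga(\FP u, \FP u)$ contribution is controlled exactly as above, producing the first term of \eqref{lem.ga1.2}. For the mixed and pure microscopic pieces, the weighted trilinear inequalities in the spirit of \cite{SG} yield bounds of the form $\|w_{-\ell,q} u\|_{L^\infty_x L^2_\xi}\|\nu^{1/2}w_{-\ell,q}\{\FI-\FP\}u\|^2$, and the $L^\infty_x$ factor is controlled by $\sqrt{\CE_{N,\ell,q}(t)}$ via Sobolev embedding in $x$ together with the definition \eqref{def.tri} of the energy norm.

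The main technical obstacle will be the correct redistribution of the exponential velocity weight $e^{q(t)\lag\xi\rag^2}$ across the soft-potential collision kernel in the microscopic piece. Because $|\xi-\xi_\ast|^\ga$ with $\ga<0$ is singular at $\xi=\xi_\ast$, one must use the conservation identity $|\xi|^2 + |\xi_\ast|^2 = |\xi'|^2+|\xi_\ast'|^2$ to transfer the exponential weight between primed and unprimed velocities, and then apply Cauchy--Schwarz without losing any polynomial weight. This step, carried out in \cite{SG} for the Boltzmann equation with soft potentials, carries over to the present setting because $0<q(t)\ll 1$ holds uniformly in $t$, so that $w_{\tau,q}(t,\cdot)$ remains comparable to the standard Maxwellian-weighted framework used there for all $t\geq 0$.
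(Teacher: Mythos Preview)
Your proposal is correct and follows essentially the same approach as the paper: the same four-fold decomposition \eqref{ga.de}, the same $L^3\times L^6$ Sobolev splitting for the purely macroscopic piece, the same $L^\infty_x$ Sobolev control of $(a,b,c)$ for the mixed pieces, and the same appeal to \cite{SG} for the purely microscopic piece. The one place the paper gives more detail than your sketch is the mixed term $I_{1,12}$ in the weighted case: there the gain part is handled by an explicit H\"older splitting, the weight-transfer inequality $w_{-\ell}(\xi)\le C\,w_{-\ell}(\xi')w_{-\ell}(\xi_\ast')$ (using $|\xi|^2\le |\xi'|^2+|\xi_\ast'|^2$ for the exponential factor), and the change of variables $(\xi,\xi_\ast)\to(\xi',\xi_\ast')$, rather than by invoking a generic trilinear bound; your last paragraph correctly identifies this as the key technical step, though it arises already in the mixed pieces and not only in the purely microscopic one.
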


\begin{proof}
We will prove  only \eqref{lem.ga1.2}, since \eqref{lem.ga1.1} can be obtained in the similar way by noticing $\lag \Ga(u,u),u\rag=\lag \Ga(u,u),\{\FI-\FP\}u\rag$. To this end, set
\begin{equation*}
    I_1=\left\lag \Ga(u,u),w_{-\ell,q}^2(t,\xi)\{\FI-\FP\}u\right\rag,
\end{equation*}
and denote $I_{1,11}, I_{1,12},I_{1,21},I_{1,22}$ to be the terms corresponding to the decomposition \eqref{ga.de}.
Now we turn to estimate these terms term by term.

\medskip

\noindent{\it Estimate on $I_{1,11}$:} Since $\Ga(u_1,u_1)$ decays exponentially in $\xi$,
\begin{equation*}
I_{1,11}\leq C\int_{\R^3}|(a,b,c)|^2\left\|\nu^{1/2}u_2\right\|_{L^2_\xi}\,dx
\leq C\|(a,b,c)\|_{H^1}\left\|\na_x(a,b,c)\right\|\cdot \left\|\nu^{1/2}u_2\right\|,
\end{equation*}
where H\"{o}lder's inequality with $1=1/3+1/6+1/2$ and Sobolev's inequalities
\begin{equation*}
    \|(a,b,c)\|_{L^3}\leq C \|(a,b,c)\|_{H^1},\quad \|(a,b,c)\|_{L^6}\leq C\|\na_x (a,b,c)\|
\end{equation*}
have been used.

\medskip

\noindent{\it Estimate on $I_{1,12}$:} For the loss term,
\begin{multline*}
I_{1,12}^-\leq C\iint_{\R^3\times \R^3} |u_2(\xi)|\cdot |(a,b,c)|\nu(\xi)\cdot w_{-\ell}^2(\xi)|u_2(\xi)|\,dxd\xi\\
\leq C \sup_{x\in \R^3} |(a,b,c)|\cdot \left\|\nu^{1/2} w_{-\ell}(\xi)u_2\right\|^2\leq C\|\na_x(a,b,c)\|_{H^1}\left\|\nu^{1/2} w_{-\ell}(\xi)u_2\right\|^2,
\end{multline*}
where we have used  in the first inequality
\begin{equation}\label{lem.ga1.p01}
\int_{\R^3} |\xi-\xi_\ast|^\ga \FM^{q'/2}(\xi_\ast)\,d\xi_\ast\leq C \lag \xi \rag^\ga
\end{equation}
for $q'>0$ and $-3<\ga<0$. For the gain term, using H\"{o}lder's inequality, $I_{1,12}^+$ is bounded by
\begin{multline}\label{lem.ga1.p02}
 \left\{\iiiint |\xi-\xi_\ast|^\ga q_0(\theta) \FM^{1/2}(\xi_\ast) w_{-\ell}^2(\xi)|u_1(\xi_\ast')u_2(\xi')|^2\,dxd\xi d\xi_\ast d\om \right\}^{1/2}\\
 \times \left\{\iiiint |\xi-\xi_\ast|^\ga q_0(\theta) \FM^{1/2}(\xi_\ast) w_{-\ell}^2(\xi)|u_2(\xi)|^2\,dxd\xi d\xi_\ast d\om \right\}^{1/2}.
\end{multline}
From \eqref{lem.ga1.p01}, the second term in \eqref{lem.ga1.p02} is bounded by $\left\|\nu^{1/2}w_{-\ell}(\xi)u_2\right\|$.  By noticing
$\lag \xi \rag \leq \min\{\lag \xi_\ast'\rag \lag \xi'\rag,\lag \xi_\ast'\rag+\lag \xi'\rag\}$ and
\begin{multline}\label{lem.ga1.p03}
 w_{-\ell}(\xi)=\lag \xi \rag^{-\ga\ell} e^{q(t)\lag \xi \rag}\\
 \leq C \lag \xi_\ast'\rag^{-\ga \ell}\lag \xi'\rag^{-\ga \ell}e^{q(t)\lag \xi_\ast'\rag}e^{q(t)\lag \xi'\rag}
 \leq C w_{-\ell}(\xi_\ast')w_{-\ell}(\xi')
\end{multline}
for $\ell\geq 0$, the first term in \eqref{lem.ga1.p02} is bounded by
\begin{eqnarray*}
&&  C \left\{\int_{x,\xi,\xi_\ast,\om} |\xi-\xi_\ast|^\ga q_0(\theta) \FM^{\frac{1}{2}}(\xi_\ast) w_{-\ell}^2(\xi)\FM^{q'}(\xi_\ast')|(a,b,c)|^2|u_2(\xi')|^2\right\}^{1/2}\nonumber\\
&& \leq  C\left\{\int_{x,\xi,\xi_\ast,\om}  |\xi-\xi_\ast|^\ga q_0(\theta) w_{-\ell}^2(\xi')\FM^{\frac{q'}{2}}(\xi_\ast')|(a,b,c)|^2|u_2(\xi')|^2 \right\}^{1/2}\\
&& =C\left\{\int_{x,\xi,\xi_\ast,\om} |\xi'-\xi_\ast'|^\ga q_0(\theta) w_{-\ell}^2(\xi')\FM^{\frac{q'}{2}}(\xi_\ast')|(a,b,c)|^2|u_2(\xi')|^2 \right\}^{1/2}\nonumber\\
&& =C\left\{\int_{x,\xi,\xi_\ast,\om} |\xi-\xi_\ast|^\ga q_0(\theta) w_{-\ell}^2(\xi)\FM^{\frac{q'}{2}}(\xi_\ast)|(a,b,c)|^2|u_2(\xi)|^2 \right\}^{1/2},\nonumber
\end{eqnarray*}
where we have used $|\FP u(\xi_\ast')|\leq C |(a,b,c)|\FM^{q'/2}(\xi_\ast')$ with $0<q'<1$ in the first equation, $\FM^{1/2}(\xi_\ast)\leq C$, the inequality \eqref{lem.ga1.p03} and $w_{-\ell}(\xi_\ast')\FM^{q'/2}(\xi_\ast')\leq C$ in the second equation, the identity $|\xi-\xi_\ast|=|\xi'-\xi_\ast'|$ in the third equation and the change of variable $(\xi,\xi)\to (\xi',\xi_\ast')$ with the unit Jacobian in the last equation. Thus, from \eqref{lem.ga1.p01}, the first term in \eqref{lem.ga1.p02} is bounded by $C \sup_{x}|(a,b,c)| \cdot\left\|\nu^{1/2}w_{-\ell}(\xi)u_2\right\|$. Therefore,
\begin{equation*}
I_{1,12}^+\leq C\|\na_x(a,b,c)\|_{H^1}\left\|\nu^{1/2} w_{-\ell}(\xi)u_2\right\|^2.
\end{equation*}

\medskip

\noindent{\it Estimate on $I_{1,21}$:} This can be done in the  completely same way as for $I_{1,12}$, so that
\begin{equation*}
I_{1,21}\leq C\left\|\na_x(a,b,c)\right\|_{H^1}\left\|\nu^{1/2} w_{-\ell}(\xi)u_2\right\|^2.
\end{equation*}

\medskip

\noindent{\it Estimate on $I_{1,22}$:}  By \cite[Lemma 3, page 305]{SG},
\begin{equation}\label{lem.ga1.p04}
   I_{1,22}\leq  C\left\{\CE_{N,\ell,q}(t)\right\}^{1/2} \left\|\nu^{1/2}w_{-\ell}\{\FI-\FP\} u\right\|^2,
\end{equation}
and for brevity we skip the proof of this term.

\medskip

Now, the desired inequality \eqref{lem.ga1.2} follows by collecting all the above estimates. This completes the proof of Lemma \ref{lem.ga1}.
\end{proof}

For the weighted estimates on $\pa^\al_\be \Ga(u,u)$, we have

\begin{lemma}\label{lem.ga2}
Let $N\geq 8$, $1\leq |\al|+|\be|\leq N$, $\ell\geq |\be|$ and $0< q(t)\ll 1$. For given $u=u(t,x,\xi)$, define $u_{\al\be}$ as $u_{\al\be}=\pa^\al u$ if $|\be|=0$ and $u_{\al\be}=\pa^\al_\be\{\FI-\FP\} u$ if $|\be|\geq 1$. Then, it holds that
\begin{multline}\label{lem.ga2.1}
\left\lag \pa^\al_\be \Ga(u,u), w_{|\be|-\ell,q}^2(t,\xi)u_{\al\be} \right\rag \\
\leq C\|(a,b,c)\|_{H^{N}}\left\|\na_x (a,b,c)\right\|_{H^{N-1}} \left\|\nu^{1/2}u_{\al\be}\right\|
+C\left\{\CE_{N,\ell,q}(t)\right\}^{1/2} \CD_{N,\ell,q}(t).
\end{multline}

\end{lemma}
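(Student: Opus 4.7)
The strategy is to decompose $u=u_1+u_2$ with $u_1=\FP u$ and $u_2=\{\FI-\FP\}u$, expand $\Ga(u,u)$ according to \eqref{ga.de}, distribute $\pa^\al_\be$ over each pair of factors by the Leibniz rule, and pair the result against $w_{|\be|-\ell,q}^2(t,\xi)u_{\al\be}$. The key algebraic device is the weight inequality
\begin{equation*}
w_{|\be|-\ell,q}(t,\xi)\leq C\,w_{|\be|-\ell,q}(t,\xi')\,w_{|\be|-\ell,q}(t,\xi_\ast'),
\end{equation*}
which holds under the hypothesis $\ell\geq|\be|$ exactly as in \eqref{lem.ga1.p03}: the algebraic factor has the nonnegative exponent $\ga(|\be|-\ell)\geq 0$ and satisfies $\lag\xi\rag^{\ga(|\be|-\ell)}\leq C\lag\xi'\rag^{\ga(|\be|-\ell)}\lag\xi_\ast'\rag^{\ga(|\be|-\ell)}$ since $\lag\xi\rag\leq 2\lag\xi'\rag\lag\xi_\ast'\rag$, while the exponential factor satisfies $e^{q(t)\lag\xi\rag^2}\leq e^{q(t)\lag\xi'\rag^2}e^{q(t)\lag\xi_\ast'\rag^2}$ by energy conservation. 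Combined with Cauchy--Schwarz against the measure $|\xi-\xi_\ast|^\ga q_0(\theta)\FM^{1/2}(\xi_\ast)\,d\om\,d\xi_\ast$ and the unitary change of variables $(\xi,\xi_\ast)\mapsto(\xi',\xi_\ast')$, this converts each gain-type contribution into weighted $L^2$ norms of $\pa^{\al'}_{\be'}u_i$ at admissible orders $|\al'|\leq|\al|$, $|\be'|\leq|\be|$.

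For the purely macroscopic piece $\pa^\al_\be\Ga(u_1,u_1)$, the integrand is a Schwartz function of $\xi$ whose coefficients are bilinear in $(a,b,c)(x)$. Leibniz in $x$ produces a sum over $\al_1+\al_2=\al$ of $\pa^{\al_1}(a,b,c)\,\pa^{\al_2}(a,b,c)$ multiplied by a Schwartz $\xi$-profile, and since $|\al|+|\be|\geq 1$ at least one derivative always falls on the macroscopic side after accounting for the pairing with $u_{\al\be}$. Applying H\"older in $x$ with $1=\tfrac12+\tfrac12$ and the Sobolev embedding $H^2\hookrightarrow L^\infty$ (available since $N\geq 8$), one places the factor of lower differential order in $L^\infty_x$ and the other in $L^2_x$, producing the leading term $C\|(a,b,c)\|_{H^N}\|\na_x(a,b,c)\|_{H^{N-1}}\|\nu^{1/2}u_{\al\be}\|$. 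The three remaining pieces $\Ga(u_1,u_2)$, $\Ga(u_2,u_1)$, $\Ga(u_2,u_2)$ are treated exactly as in Lemma \ref{lem.ga1}: split into gain and loss, Leibniz-expand the derivatives onto the $u_i$-factors (derivatives on the kernel or on $\FM^{1/2}(\xi_\ast)$ produce only polynomial $\xi$-factors absorbed by the strictly positive exponential weight $e^{q(t)\lag\xi\rag^2}$), use the weight inequality above together with the post-collisional change of variables, and close by Cauchy--Schwarz; the pure microscopic contribution is covered by the analog of the Strain--Guo bound used for \eqref{lem.ga1.p04}. In every case, the resulting weighted $L^2$ norms are allocated between $\CE_{N,\ell,q}(t)^{1/2}$ and $\CD_{N,\ell,q}(t)$ by putting the lowest-order factor in $L^\infty_x$ via $H^2\hookrightarrow L^\infty$, so that the higher-order factor always appears in the form $\nu^{1/2}w_{|\be'|-\ell,q}\pa^{\al'}_{\be'}\{\FI-\FP\}u$, which sits inside $\CD_{N,\ell,q}(t)$.

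The main technical obstacle is the bookkeeping of velocity derivatives when $|\be|\geq 1$: when $\pa_\be$ falls on $u_i(\xi')$ or $u_j(\xi_\ast')$ inside the gain integral, the resulting derivatives retain the post-collisional coupling through $\om$, and one must check that after the weight exchange and the change of variables the output lands at the correct order in $\CD_{N,\ell,q}(t)$. This works because the weight $w_{|\be|-\ell,q}$ is monotone nonincreasing in $\ell-|\be|$, so routing a derivative from a higher-order factor down to a lower-order factor only strengthens the algebraic weight and still fits into the dissipation norm. The $\pa_\be$-derivatives acting on the loss-term factor $\nu(\xi)u$ are absorbed by the first inequality of the preliminary lemma from \cite{SG}, contributing only an $\eta$-fraction of the dissipation plus a low-order compact correction that is absorbable into the macroscopic term in \eqref{lem.ga2.1}. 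Summing over all Leibniz splittings and the four bilinear pieces then yields the claim.
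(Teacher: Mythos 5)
Your overall architecture — the decomposition \eqref{ga.de}, the Leibniz expansion, the weight-transfer inequality $w_{|\be|-\ell,q}(t,\xi)\leq C\,w_{|\be|-\ell,q}(t,\xi')w_{|\be|-\ell,q}(t,\xi_\ast')$, the post-collisional change of variables, and the case analysis on which factor carries the bulk of the $x$-derivatives — is the same as the paper's. But there are two concrete flaws. First, your treatment of the purely macroscopic piece does not produce the required factor $\|\na_x(a,b,c)\|_{H^{N-1}}$. Your justification (``since $|\al|+|\be|\geq 1$ at least one derivative always falls on the macroscopic side'') fails precisely when $|\al|=0$ and $|\be|\geq 1$: then all derivatives are velocity derivatives, they land on the Schwartz $\xi$-profiles, and both macroscopic factors appear underived as $|(a,b,c)(x)|^2$. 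With your H\"older allocation $1=\tfrac12+\tfrac12$ plus $H^2\hookrightarrow L^\infty$ you would then obtain $\|(a,b,c)\|_{H^2}\,\|(a,b,c)\|\,\|\nu^{1/2}u_{\al\be}\|$, and $\|(a,b,c)\|$ is \emph{not} controlled by $\|\na_x(a,b,c)\|_{H^{N-1}}$ nor by $\CD_{N,\ell,q}(t)^{1/2}$ (the dissipation contains only $\|a\|$ and $\|\na_x\pa^\al(a,b,c)\|$). This would break the absorption into $C\de\,\CD_{N,\ell,q}(t)$ in the energy estimate, so it is not cosmetic. The fix is the one the paper uses already in Lemma \ref{lem.ga1}: H\"older with $1=\tfrac13+\tfrac16+\tfrac12$ together with $\|(a,b,c)\|_{L^6}\leq C\|\na_x(a,b,c)\|$, which manufactures the gradient even when no derivative falls on $(a,b,c)$.

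Second, your parenthetical claim that ``derivatives on the kernel \ldots produce only polynomial $\xi$-factors'' is wrong: differentiating $|\xi-\xi_\ast|^\ga$ in $\xi$ yields $|\xi-\xi_\ast|^{\ga-1}$-type terms, i.e.\ a \emph{more} singular kernel, which cannot be absorbed by the exponential weight. The correct bookkeeping (encoded in the paper's displayed formula for $\pa^\al_\be\Ga_0$) uses translation invariance so that the velocity derivatives fall only on $\FM^{1/2}(\xi_\ast)$ (producing $\pa_{\be_0}[\FM^{1/2}(\xi_\ast)]$, harmless Gaussian-decaying factors in $\xi_\ast$, not controlled by $e^{q(t)\lag\xi\rag^2}$ as you state) and on the arguments evaluated at $\xi'$, $\xi_\ast'$ — never on the kernel. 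Finally, a difference worth noting rather than a gap: in the cases where most $x$-derivatives fall on the macroscopic factor, the paper further splits the velocity domain into $\D_1=\{|\xi_\ast|\geq\tfrac12|\xi|\}$, $\D_2$, $\D_3=\{|\xi_\ast|\leq\tfrac12|\xi|,\,|\xi|\geq1\}$ and replaces $|\xi-\xi_\ast|^\ga$ by the collision-invariant $(1+|\xi|^2+|\xi_\ast|^2)^{\ga/2}$ on $\D_3$; your uniform weight-transfer route can be made to work, but only after checking that the $L^\infty_x$ factor carrying the full weight, $\sup_x\|\nu^{1/2}w_{|\be|-\ell,q}\pa^{\al_2}_{\be_2}\{\FI-\FP\}u\|_{L^2_\xi}$, is controlled by $\CE_{N,\ell,q}(t)^{1/2}$ via Sobolev embedding, which uses $|\al_2|+|\be_2|\leq N/2$ and $w_{|\be|-\ell,q}\leq w_{|\be_2|-\ell,q}$; this verification is absent from your sketch.
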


\begin{proof}
Take $\al,\be $ with $1\leq |\al|+|\be|\leq N$. Write
\begin{equation*}
    \pa_\be^\al \Ga(f,g)=\sum C^{\be}_{\be_0\be_1\be_2}C^\al_{\al_1,\al_2}\Ga_0\left(\pa^{\al_1}_{\be_1}f,\pa^{\al_2}_{\be_2}g\right),
\end{equation*}
where the summation is taken over $\be_0+\be_1+\be_2=\be$ and $\al_1+\al_2=\al$, and $\Ga_0$ is given by
\begin{multline*}
\Ga_0\left(\pa^{\al_1}_{\be_1}f,\pa^{\al_2}_{\be_2}g\right)=\Ga_0^+\left(\pa^{\al_1}_{\be_1}f,\pa^{\al_2}_{\be_2}g\right)-\Ga_0^-\left(\pa^{\al_1}_{\be_1}f,\pa^{\al_2}_{\be_2}g\right)\\
=\iint_{\R^3\times \mathbb{S}^2}|\xi-\xi_\ast|^\ga q_0(\theta) \pa_{\be_0} \left[\FM^{1/2}(\xi_\ast)\right]\pa_{\be_1}^{\al_1} f(\xi_\ast')\pa_{\be_2}^{\al_2} g(\xi')\,d\xi d\om\\
-\pa_{\be_2}^{\al_2} g(\xi)\iint_{\R^3\times \mathbb{S}^2} |\xi-\xi_\ast|^\ga q_0(\theta) \pa_{\be_0}\left[\FM^{1/2}(\xi_\ast)\right]\pa_{\be_1}^{\al_1} f(\xi_\ast)\,d\xi_\ast d\om.
\end{multline*}
In what follows we set
\begin{equation*}
    I_2=\left\lag \Ga_0\left(\pa^{\al_1}_{\be_1}u,\pa^{\al_2}_{\be_2}u\right), w_{|\be|-\ell,q}^2(t,\xi)u_{\al\be} \right\rag
\end{equation*}
and denote the terms corresponding to the decomposition \eqref{ga.de} as $I_{2,11}, I_{2,12}, I_{2,21}$ and $I_{2,22}$. These terms will be estimated term by term as follows.

\medskip

\noindent{\it Estimate on $I_{2,11}$:} Notice that $\Ga_0\left(\pa^{\al_1}_{\be_1}u_1,\pa^{\al_2}_{\be_2}u_1\right)$ decays exponentially in $\xi$ since $-3<\ga<0$. Then,
\begin{multline*}
I_{2,11}\leq C\int_{\R^3} \left|\pa^{\al_1}(a,b,c)\right|\cdot \left|\pa^{\al_2}(a,b,c)\right|\cdot \left\|\nu^{1/2}u_{\al\be}\right\|_{L^2_\xi}\,dx\\
\leq C\|(a,b,c)\|_{H^{N}}\left\|\na_x (a,b,c)\right\|_{H^{N-1}}\left\|\nu^{1/2}u_{\al\be}\right\|.
\end{multline*}

\medskip

\noindent{\it Estimate on $I_{2,12}$:} For the loss term, since
\begin{multline*}
I_{2,12}^{-}\leq \int_{x,\xi,\xi_\ast,\om} |\xi-\xi_\ast|^\ga q_0(\theta) \FM^{\frac{q'}{2}}(\xi_\ast) \left\{\FM^{\frac{q'}{2}}(\xi_\ast)|\pa^{\al_1}(a,b,c)|\right\} \\
\times \left|\pa_{\be_2}^{\al_2}u_2(\xi)\right|
\cdot w_{|\be|-\ell}^2(\xi)\left|u_{\al\be}(\xi)\right| \\
\leq C \int dx\, \left|\pa^{\al_1}(a,b,c)\right| \int d\xi\, \lag \xi \rag^{\ga}  w_{|\be|-\ell}^2(\xi)\left|\pa_{\be_2}^{\al_2}u_2(\xi)u_{\al\be}(\xi)\right|,
\end{multline*}
we have for $|\al_1|\leq N/2$ that
\begin{multline*}
 I_{2,12}^{-}\leq C \sup_{x} \left|\pa^{\al_1}(a,b,c)\right| \cdot \left\|\nu^{1/2} w_{|\be|-\ell}(\xi)\pa_{\be_2}^{\al_2} u_2\right\|\\
 \times
    \left\|\nu^{1/2} w_{|\be|-\ell}(\xi)u_{\al\be} \right\|
    \leq C\left\{\CE_{N,\ell,q}(t)\right\}^{1/2} \CD_{N,\ell,q}(t),
\end{multline*}
while for the case $|\al_1|\geq N/2$, we can get that
\begin{multline*}
 I_{2,12}^{-}\leq C\int dx\, \left|\pa^{\al_1}(a,b,c)\right|\cdot\left\|\nu^{1/2} w_{|\be|-\ell}(\xi)\pa_{\be_2}^{\al_2} u_2\right\|_{L^2_\xi}
 \left\|\nu^{1/2} w_{|\be|-\ell}(\xi)u_{\al\be}\right\|_{L^2_\xi}\\
 \leq C \sup_x\left\|\nu^{1/2} w_{|\be|-\ell}(\xi)\pa_{\be_2}^{\al_2} u_2\right\|_{L^2_\xi}\cdot \left\|\pa^{\al_1}(a,b,c)\right\|\cdot \left\|\nu^{1/2} w_{|\be|-\ell}(\xi)u_{\al\be}\right\|\\
 \leq C \left\{\CE_{N,\ell,q}(t)\right\}^{1/2} \CD_{N,\ell,q}(t).
\end{multline*}

For the gain term $I_{2,12}^+$,
\begin{multline*}
I_{2,12}^{+}\leq \int_{x,\xi,\xi_\ast,\om} |\xi-\xi_\ast|^\ga q_0(\theta) \FM^{\frac{q'}{2}}(\xi_\ast) \left\{\FM^{\frac{q'}{2}}(\xi_\ast')|\pa^{\al_1}(a,b,c)|\right\} \\
\times \left|\pa_{\be_2}^{\al_2}u_2(\xi')\right|
\cdot w_{|\be|-\ell}^2(\xi)\left|u_{\al\be}(\xi)\right|.
\end{multline*}
We now turn to deduce an estimate on the right-hand side of the above inequality by considering the two cases $|\al_1|\leq N/2$ and $|\al_1|>N/2$, respectively. In the case of  $|\al_1|\leq N/2$, we have from  H\"{o}lder's inequality that

\begin{multline}\label{lem.ga2.p1}
I_{2,12}^{+}\leq \int dx\,\left|\pa^{\al_1}(a,b,c)\right|\\
\left\{\int_{\xi,\xi_\ast,\om}  |\xi-\xi_\ast|^\ga q_0(\theta) \FM^{\frac{q'}{2}}(\xi_\ast) \FM^{\frac{q'}{2}}(\xi_\ast')
w_{|\be|-\ell}^2(\xi)\left|\pa_{\be_2}^{\al_2}u_2(\xi')\right|^2\right\}^{1/2}\\
\left\{\int_{\xi,\xi_\ast,\om}  |\xi-\xi_\ast|^\ga q_0(\theta) \FM^{\frac{q'}{2}}(\xi_\ast) \FM^{\frac{q'}{2}}(\xi_\ast')
w_{|\be|-\ell}^2(\xi)\left|u_{\al\be}(\xi)\right|^2\right\}^{1/2}.
\end{multline}
As for estimating the first factor in \eqref{lem.ga1.p02}, we have
\begin{multline*}
\int_{\xi,\xi_\ast,\om}  |\xi-\xi_\ast|^\ga q_0(\theta) \FM^{\frac{q'}{2}}(\xi_\ast) \FM^{\frac{q'}{2}}(\xi_\ast')
w_{|\be|-\ell}^2(\xi)\left|\pa_{\be_2}^{\al_2}u_2(\xi')\right|^2\\
\leq C \int_{\xi,\xi_\ast,\om}  |\xi'-\xi_\ast'|^\ga q_0(\theta) \FM^{\frac{q'}{2}}(\xi_\ast) \FM^{\frac{q'}{2}}(\xi_\ast')
w_{|\be|-\ell}^2(\xi')w_{|\be|-\ell}^2(\xi_\ast')\left|\pa_{\be_2}^{\al_2}u_2(\xi')\right|^2\\
\leq C\int_{\xi,\xi_\ast,\om}  |\xi'-\xi_\ast'|^\ga q_0(\theta) \FM^{\frac{q'}{4}}(\xi_\ast')
w_{|\be|-\ell}^2(\xi')\left|\pa_{\be_2}^{\al_2}u_2(\xi')\right|^2\\
=C\int_{\xi,\xi_\ast,\om}  |\xi-\xi_\ast|^\ga q_0(\theta) \FM^{\frac{q'}{4}}(\xi_\ast)
w_{|\be|-\ell}^2(\xi)\left|\pa_{\be_2}^{\al_2}u_2(\xi)\right|^2,
\end{multline*}
which is bounded by $\left\|\nu^{1/2}w_{|\be|-\ell}(\xi)\pa_{\be_2}^{\al_2}u_2\right\|_{L^2_\xi}^2$. Moreover,
\begin{multline*}
\int_{\xi,\xi_\ast,\om}  |\xi-\xi_\ast|^\ga q_0(\theta) \FM^{\frac{q'}{2}}(\xi_\ast) \FM^{\frac{q'}{2}}(\xi_\ast')
w_{|\be|-\ell}^2(\xi)\left|u_{\al\be}(\xi)\right|^2\\
\leq C \int_{\xi,\xi_\ast}  |\xi-\xi_\ast|^\ga \FM^{\frac{q'}{2}}(\xi_\ast)
w_{|\be|-\ell}^2(\xi)\left|u_{\al\be}(\xi)\right|^2\\
\leq C \left\|\nu^{1/2}w_{|\be|-\ell}(\xi)u_{\al\be}\right\|_{L^2_\xi}^2.
\end{multline*}
Therefore, plugging the above two estimates into \eqref{lem.ga2.p1}, in the case when $|\al_1|\leq N/2$, we have
\begin{multline*}
I_{2,12}^{+}\leq C \sup_{x}\left|\pa^{\al_1}(a,b,c)\right|\cdot  \left\|\nu^{1/2}w_{|\be|-\ell}(\xi)\pa_{\be_2}^{\al_2}u_2\right\|\\
\times\left\|\nu^{1/2}w_{|\be|-\ell}(\xi)u_{\al\be}\right\|
\leq  C\left\{\CE_{N,\ell,q}(t)\right\}^{1/2} \CD_{N,\ell,q}(t).
\end{multline*}
The discussion for the case $|\al_1|\geq N/2$ is divided into the following three subcases: In $\D_1=\{|\xi_\ast|\geq \frac{1}{2}|\xi|\}$, we have
\begin{multline*}
I_{2,121}^+\leq C\int dx\, \left|\pa^{\al_1} (a,b,c)\right|\\
\int_{\xi,\xi_\ast,\om}|\xi-\xi_\ast|^\ga q_0(\theta) \FM^{\frac{q'}{2}}(\xi)\FM^{\frac{q'}{2}}(\xi_\ast) \left|\pa_{\be_2}^{\al_2} u_2(\xi') u_{\al\be}(\xi)\right|.
\end{multline*}
Here, by H\"{o}lder's inequality, we can deduce that
\begin{multline*}
\int_{\xi,\xi_\ast,\om}|\xi-\xi_\ast|^\ga q_0(\theta) \FM^{\frac{q'}{2}}(\xi)\FM^{\frac{q'}{2}}(\xi_\ast) \left|\pa_{\be_2}^{\al_2} u_2(\xi') u_{\al\be}(\xi)\right|\\
\leq \left\{\int_{\xi,\xi_\ast,\om}|\xi-\xi_\ast|^\ga q_0(\theta) \FM^{\frac{q'}{2}}(\xi)\FM^{\frac{q'}{2}}(\xi_\ast) \left|\pa_{\be_2}^{\al_2} u_2(\xi')\right|^2\right\}^{\frac{1}{2}}\\
\times \left\{\int_{\xi,\xi_\ast,\om}|\xi-\xi_\ast|^\ga q_0(\theta) \FM^{\frac{q'}{2}}(\xi)\FM^{\frac{q'}{2}}(\xi_\ast) \left|u_{\al\be}(\xi)\right|^2\right\}^{\frac{1}{2}}\\
\leq C\left\|\nu^{1/2}\pa_{\be_2}^{\al_2} u_2\right\|_{L^2_\xi}\left\|\nu^{1/2}u_{\al\be}\right\|_{L^2_\xi}.
\end{multline*}
Therefore, we have in this subcase that
\begin{multline}\label{lem.ga2.p2}
I_{2,12}^+\leq C\sup_{x}\left\|\nu^{1/2}\pa_{\be_2}^{\al_2} u_2\right\|_{L^2_\xi}\cdot \left\|\pa^{\al_1}(a,b,c)\right\|\cdot \left\|\nu^{1/2}u_{\al\be}\right\|\\
\leq  C\left\{\CE_{N,\ell,q}(t)\right\}^{1/2} \CD_{N,\ell,q}(t).
\end{multline}
Secondly, we consider the region $\D_2=\{|\xi_\ast|\leq \frac{1}{2}|\xi|,|\xi|\leq 1\}$. In this case, we can get that
\begin{multline*}
I_{2,122}^+\leq C\int dx\, \left|\pa^{\al_1} (a,b,c)\right|\\
\int_{\xi,\xi_\ast,\om}|\xi-\xi_\ast|^\ga q_0(\theta) \FM^{\frac{q'}{2}}(\xi_\ast)\FM^{\frac{q'}{2}}(\xi_\ast') \left|\pa_{\be_2}^{\al_2} u_2(\xi') u_{\al\be}(\xi)\right|.
\end{multline*}
Similarly as in \eqref{lem.ga2.p1},
\begin{multline*}
\int_{\xi,\xi_\ast,\om}|\xi-\xi_\ast|^\ga q_0(\theta) \FM^{\frac{q'}{2}}(\xi_\ast)\FM^{\frac{q'}{2}}(\xi_\ast') \left|\pa_{\be_2}^{\al_2} u_2(\xi') u_{\al\be}(\xi)\right|\\
\leq C\left\|\nu^{1/2}\pa_{\be_2}^{\al_2} u_2\right\|_{L^2_\xi}\left\|\nu^{1/2}u_{\al\be}\right\|_{L^2_\xi}.
\end{multline*}
Hence, as in \eqref{lem.ga2.p2}, the  above two estimates imply
$$
I_{2,122}^+\leq C\left\{\CE_{N,\ell,q}(t)\right\}^{1/2} \CD_{N,\ell,q}(t).
$$
At last, we consider the region $\D_3=\{|\xi_\ast|\leq \frac{1}{2}|\xi|,|\xi|\geq 1\}$. In this domain, $|\xi-\xi_\ast|\geq \frac{1}{2}|\xi|$ and hence
\begin{equation*}
    |\xi-\xi_\ast|^\ga \leq C |\xi|^\ga\leq C (1+|\xi|^2+|\xi_\ast|^2)^{\frac{\ga}{2}}.
\end{equation*}
Thus,
\begin{multline}\label{lem.ga2.p2-0}
I_{2,123}^+\leq C\int dx\, \left|\pa^{\al_1} (a,b,c)\right|\,\int_{\xi,\xi_\ast,\om}\left(1+|\xi|^2+|\xi_\ast|^2\right)^{\frac{\ga}{2}} q_0(\theta) \\
\times \FM^{\frac{q'}{2}}(\xi_\ast)\FM^{\frac{q'}{2}}(\xi_\ast') w_{|\be|-\ell}^2(\xi)\left|\pa_{\be_2}^{\al_2} u_2(\xi') u_{\al\be}(\xi)\right|.
\end{multline}
From H\"{o}lder's inequality,
\begin{multline*}
\int_{\xi,\xi_\ast,\om}(1+|\xi|^2+|\xi_\ast|^2)^{\frac{\ga}{2}} q_0(\theta) \FM^{\frac{q'}{2}}(\xi_\ast)\FM^{\frac{q'}{2}}(\xi_\ast') w_{|\be|-\ell}^2(\xi)\left|\pa_{\be_2}^{\al_2} u_2(\xi') u_{\al\be}(\xi)\right|\\
\leq \left\{\int_{\xi,\xi_\ast,\om}(1+|\xi|^2+|\xi_\ast|^2)^{\frac{\ga}{2}}  q_0(\theta) \FM^{\frac{q'}{2}}(\xi_\ast)\FM^{\frac{q'}{2}}(\xi_\ast') w_{|\be|-\ell}^2(\xi)\left|\pa_{\be_2}^{\al_2} u_2(\xi')\right|^2\right\}^{\frac{1}{2}}\\
\times \left\{\int_{\xi,\xi_\ast,\om}(1+|\xi|^2+|\xi_\ast|^2)^{\frac{\ga}{2}} q_0(\theta) \FM^{\frac{q'}{2}}(\xi_\ast)\FM^{\frac{q'}{2}}(\xi_\ast') w_{|\be|-\ell}^2(\xi)\left| u_{\al\be}(\xi)\right|^2\right\}^{\frac{1}{2}}\\
\leq C\left\|\nu^{1/2}w_{|\be|-\ell}(\xi)\pa_{\be_2}^{\al_2} u_2\right\|_{L^2_\xi}\cdot \left\|\nu^{1/2}w_{|\be|-\ell}(\xi) u_{\al\be}\right\|_{L^2_\xi},
\end{multline*}
where the computation similar to the estimate on the first factor in \eqref{lem.ga1.p02} by using $|\xi|^2+|\xi_\ast|^2=|\xi'|^2+|\xi_\ast'|^2$ instead of $|\xi-\xi_\ast|=|\xi'-\xi_\ast'|$   has been performed. Therefore,
\begin{multline*}
I_{2,123}^+\leq C\int dx\,\left|\pa^{\al_1} (a,b,c)\right|\\
 \times\left\|\nu^{1/2}w_{|\be|-\ell}(\xi)\pa_{\be_2}^{\al_2} u_2\right\|_{L^2_\xi}\cdot \left\|\nu^{1/2}w_{|\be|-\ell}(\xi) u_{\al\be}\right\|_{L^2_\xi}\\
 \leq C \sup_{x}\left\|\nu^{1/2}w_{|\be|-\ell}(\xi)\pa_{\be_2}^{\al_2} u_2\right\|_{L^2_\xi}\cdot \left\|\pa^{\al_1}(a,b,c)\right\|\cdot \left\|\nu^{1/2}w_{|\be|-\ell}(\xi) u_{\al\be}\right\|\\
 \leq  C\left\{\CE_{N,\ell,q}(t)\right\}^{1/2} \CD_{N,\ell,q}(t).
\end{multline*}

\medskip

\noindent{\it Estimate on $I_{2,21}$:} For the loss term, due to
\begin{multline*}
I_{2,21}^-\leq \int dx\, \left|\pa^{\al_2}(a,b,c)\right|\\
\cdot \int_{\xi,\xi_\ast,\om} |\xi-\xi_\ast|^\ga q_0(\theta)\FM^{\frac{q'}{2}}(\xi)\FM^{\frac{q'}{2}}(\xi_\ast) \left|\pa_{\be_1}^{\al_1}u_2(\xi_\ast) u_{\al\be}(\xi)\right|,
\end{multline*}
we have by H\"{o}lder's inequality that
\begin{equation*}
   I_{2,21}^-\leq C \sup_{x}\left|\pa^{\al_2}(a,b,c)\right|\cdot \left\|\nu^{1/2}\pa_{\be_1}^{\al_1}u_2\right\|\cdot \left\|\nu^{1/2}u_{\al\be}\right\|
\end{equation*}
for  $|\al_2|\leq N/2$, and
\begin{equation*}
     I_{2,21}^-\leq C \sup_{x}\left\|\nu^{1/2}\FM^{q'/4}\pa_{\be_1}^{\al_1}u_2\right\|_{L^2_\xi}\cdot \left\|\pa^{\al_2}(a,b,c)\right\|\cdot \left\|\nu^{1/2}u_{\al\be}\right\|
\end{equation*}
for  $|\al_2|\geq N/2$. Hence, in both cases,
$$
I_{2,21}^-\leq C\left\{\CE_{N,\ell,q}(t)\right\}^{1/2} \CD_{N,\ell,q}(t).
$$

For the gain term, by noticing that
\begin{multline*}
I_{2,21}^+\leq C\int dx\, \left|\pa^{\al_2}(a,b,c)\right|\int_{\xi,\xi_\ast,\om} |\xi-\xi_\ast|^\ga q_0(\theta)\\
\cdot \FM^{\frac{q'}{2}}(\xi_\ast)\FM^{\frac{q'}{2}}(\xi') w_{|\be|-\ell}^2(\xi)\left|\pa_{\be_1}^{\al_1}u_2(\xi_\ast') u_{\al\be}(\xi)\right|,
\end{multline*}
then, in the  same way as for $I_{2,12}^+$, when $|\al_2|\leq N/2$, we have
\begin{multline*}
I_{2,21}^+\leq C \sup_{x}\left|\pa^{\al_2}(a,b,c)\right|\cdot \left\|\nu^{1/2}w_{|\be|-\ell}(\xi)\pa_{\be_1}^{\al_1}u_2\right\|\\
 \times\left\|\nu^{1/2}w_{|\be|-\ell}(\xi) u_{\al\be}\right\|\leq C\left\{\CE_{N,\ell,q}(t)\right\}^{1/2} \CD_{N,\ell,q}(t).
\end{multline*}
While when $|\al_2|\geq N/2$,
\begin{equation*}
  I_{2,21}^+\leq C\left\{
  \begin{array}{ll}
    \dis \sup_{x}\left\|\nu^{\frac{1}{2}}\pa_{\be_1}^{\al_1}u_2\right\|_{L^2_\xi}\cdot  \left\|\pa^{\al_2}(a,b,c)\right\|\cdot \left\|\nu^{1/2}u_{\al\be}\right\| &\quad\dis \text{over}\ \D_{1}\cup \D_2,\\[3mm]
    \dis \sup_{x}\left\|\nu^{1/2}w_{|\be|-\ell}(\xi)\pa_{\be_1}^{\al_1} u_2\right\|_{L^2_\xi}&\\
     \dis \qquad\times\left\|\pa^{\al_2}(a,b,c)\right\|\cdot \left\|\nu^{1/2}w_{|\be|-\ell}(\xi) u_{\al\be}\right\|&\quad\dis \text{over}\ \D_{3}.
  \end{array}\right.
\end{equation*}
Therefore, for each $\al_2$,
\begin{equation*}
    I_{2,21}^+\leq C\left\{\CE_{N,\ell,q}(t)\right\}^{1/2} \CD_{N,\ell,q}(t).
\end{equation*}

\medskip

\noindent{\it Estimate on $I_{2,22}$:}  As for \eqref{lem.ga1.p04}, by \cite[Lemma 3, page 305]{SG},
\begin{equation*}
   I_{2,22}\leq C\left\{\CE_{N,\ell,q}(t)\right\}^{1/2} \CD_{N,\ell,q}(t),
\end{equation*}
and we also skip the proof of this term for brevity.

\medskip

Now, \eqref{lem.ga2.1} follows by collecting all the above estimates. This completes the proof of Lemma \ref{lem.ga2}.
\end{proof}

\subsection{Estimate on $\xi\cdot \na_x\phi u$}

This subsection concerns the  estimate on $\xi\cdot \na_x\phi u$. To this end, we first have the following result on the case of zeroth order derivative.

\begin{lemma}\label{lem.xiu0}
Assume that $u$ and $\phi$ satisfy the second equation of \eqref{moment.l}. It holds that
\begin{multline}\label{lem.xiu0.1}
\left\lag \frac{1}{2} \xi\cdot \na_x\phi u, u\right\rag \leq \frac{1}{2} \frac{d}{dt}\int_{\R^3} |b|^2 (a+2c)\,dx \\
+C \left\{\|(a,b,c)\|_{H^2}+\|\na_x\phi\|_{H^1} + \|\na_x\phi\|\cdot \|\na_x b\|\right\}\\
 \qquad\qquad\qquad\qquad\qquad\times\left\{\|\na_x (a,b,c)\|^2+\left\|\nu^{1/2}\{\FI-\FP\}u\right\|^2\right\}\\
+C\left \|\na_x^2\phi\right\|_{H^1} \left\|\lag \xi \rag^{1/2} \{\FI-\FP\} u\right\|^2.
\end{multline}

\end{lemma}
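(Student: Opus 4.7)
The plan is to split $u = u_1 + u_2$ with $u_1 := \FP u$ and $u_2 := \{\FI-\FP\}u$, yielding
\[
\left\lag \tfrac{1}{2}\xi\cdot\na_x\phi\, u, u\right\rag = \tfrac{1}{2}\iint \xi\cdot\na_x\phi\, u_1^2\,d\xi dx + \iint \xi\cdot\na_x\phi\, u_1 u_2\,d\xi dx + \tfrac{1}{2}\iint \xi\cdot\na_x\phi\, u_2^2\,d\xi dx.
\]
The last two pieces are handled directly. For the cross term, the pointwise bound $|\int \xi u_1 u_2\,d\xi|\lesssim |(a,b,c)(x)|\,\|\nu^{1/2} u_2(x,\cdot)\|_{L^2_\xi}$ from the Gaussian decay of $u_1$, combined with H\"older and the Sobolev embeddings $\|(a,b,c)\|_{L^6}\lesssim\|\na_x(a,b,c)\|$ and $\|\na_x\phi\|_{L^3}\lesssim\|\na_x\phi\|_{H^1}$, produces a bound of the form $\|\na_x\phi\|_{H^1}\,\|\na_x(a,b,c)\|\,\|\nu^{1/2}u_2\|$. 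For the $u_2^2$ piece, using $|\xi|\leq \lag\xi\rag$ together with the three-dimensional embedding $\|\na_x\phi\|_{L^\infty}\lesssim\|\na_x^2\phi\|_{H^1}$ gives $C\|\na_x^2\phi\|_{H^1}\,\|\lag\xi\rag^{1/2}u_2\|^2$, matching the last term on the right of \eqref{lem.xiu0.1}.

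The heart of the argument is the treatment of the $u_1^2$ contribution. Using the explicit formula $u_1 = \{a+b\cdot\xi+c(|\xi|^2-3)\}\FM^{1/2}$ and the Gaussian moment identities (odd moments vanish, $\int \xi_i\xi_j\FM\,d\xi = \de_{ij}$, $\int \xi_i\xi_j(|\xi|^2-3)\FM\,d\xi = 2\de_{ij}$), a direct computation yields the algebraic identity $\int_{\R^3} \xi\, u_1^2\,d\xi = 2\,(a+2c)\,b$, so
\[
\tfrac{1}{2}\iint \xi\cdot\na_x\phi\, u_1^2\,d\xi dx = \int_{\R^3} \na_x\phi\cdot b\,(a+2c)\,dx.
\]
To recover the desired time derivative, I rearrange the momentum equation in \eqref{moment.l} as $\na_x\phi = \pa_t b + \na_x(a+2c) + \na_x\cdot\Theta(\{\FI-\FP\}u) - a\,\na_x\phi$, substitute, and observe
\[
\int \pa_t b\cdot b\,(a+2c)\,dx = \tfrac{1}{2}\frac{d}{dt}\int|b|^2(a+2c)\,dx - \tfrac{1}{2}\int |b|^2\, \pa_t(a+2c)\,dx,
\]
the first piece being precisely the time derivative appearing on the right of \eqref{lem.xiu0.1}.

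It remains to absorb the resulting error terms. The contribution $\int \na_x(a+2c)\cdot b(a+2c)\,dx$ is integrated by parts to $-\tfrac{1}{2}\int(a+2c)^2\na_x\cdot b\,dx$ and estimated by $L^4$-Sobolev and $L^\infty$-Sobolev inequalities on $(a,c)$. The term $\int\na_x\cdot\Theta(u_2)\cdot b(a+2c)\,dx$ is handled by moving one derivative onto $b(a+2c)$ and using $\|\Theta(u_2)\|\lesssim\|\nu^{1/2}u_2\|$. The cubic $\int a\,\na_x\phi\cdot b(a+2c)\,dx$ is controlled by $L^\infty$ bounds on $a$ and $(a,c)$. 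Finally, $-\tfrac{1}{2}\int|b|^2\pa_t(a+2c)\,dx$ is expanded via the $\pa_t a$ and $\pa_t c$ equations of \eqref{moment.l}, producing lower-order contributions involving $\na_x\cdot b$, $\na_x\cdot\Lambda(u_2)$, and $\na_x\phi\cdot b$. The main obstacle is the careful bookkeeping required to ensure every error term falls into one of the three factor pairings on the right-hand side of \eqref{lem.xiu0.1}; in particular, the unusual factor $\|\na_x\phi\|\cdot\|\na_x b\|$ arises precisely from cross terms pairing the self-consistent field with $b$-derivatives that come from expanding $\pa_t(a+2c)$ against $|b|^2$.
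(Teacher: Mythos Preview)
Your proposal is correct and follows essentially the same route as the paper's own proof: the same $u=u_1+u_2$ splitting into three pieces, the same Gaussian moment computation giving $\int \xi\, u_1^2\,d\xi = 2(a+2c)b$, the same substitution of the momentum equation for $\na_x\phi$ to extract the time derivative $\tfrac{1}{2}\frac{d}{dt}\int |b|^2(a+2c)\,dx$, and the same Sobolev--H\"older estimates for the cross and microscopic pieces. The only cosmetic difference is that you spell out the integration by parts on $\int \na_x(a+2c)\cdot b(a+2c)\,dx$ and on the $\Theta(u_2)$ term, whereas the paper absorbs these directly into the Sobolev--Cauchy--Schwarz bookkeeping; both yield bounds of the required form.
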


\begin{proof}
Setting  $I_3=\left\lag \frac{1}{2} \xi\cdot \na_x\phi u, u\right\rag$ and noticing  $u=\FP u +\{\FI-\FP\}u=u_1+u_2$,
\begin{equation*}
    I_3=\sum_{m=1}^3I_{3,m}=\left\lag \frac{1}{2} \xi\cdot \na_x\phi, u_1^2\right\rag
    +\left\lag \xi\cdot \na_x\phi, u_1\cdot u_2\right\rag\\
    +\left\lag \frac{1}{2} \xi\cdot \na_x\phi, u_2^2\right\rag.
\end{equation*}
First, for $ I_{3,1}$, as in \cite{DY-09VPB,D-1VMB}, using the second equation of \eqref{moment.l} to replace $\na_x\phi$ yields
\begin{multline*}
 I_{3,1} = \int_{\R^3}\na_x\phi\cdot b (a+2c)\,dx
  =\frac{1}{2}\frac{d}{dt}\int_{\R^3}|b|^2(a+2c)\,dx \\
  +\int_{\R^3} |b|^2\left\{\frac{5}{6}\na_x\cdot b+\frac{5}{3}\na_x\cdot \Lambda(u_2)-\frac{1}{3}\na_x\phi\cdot b\right\}\,dx\\
  +\int_{\R^3} b(a+2c)\cdot \{\na_x (a+2c)+ \na_x\cdot \Theta(u_2)-\na_x\phi a\}\,dx,
\end{multline*}
where $\Theta(\cdot)$ and $\Lambda(\cdot)$ are defined in \eqref{def.moment}. Further by the H\"{o}lder, Sobolev and Cauchy-Schwarz inequalities, the above equation implies
\begin{multline*}
 I_{3,1}\leq \frac{1}{2}\frac{d}{dt}\int_{\R^3}|b|^2(a+2c)\,dx \\
 +C \left\{\|(a,b,c)\|_{H^2}+\|\na_x\phi\|\cdot \|\na_x b\|\right\}\|\na_x (a,b,c)\|^2\\
 +C \left\|(a,b,c)\|_{H^2}\{\|\Lambda(u_2)\|^2+\|\Theta(u_2)\|^2\right\}.
\end{multline*}
Noticing
\begin{equation*}
    \|\Lambda(u_2)\|+\|\Theta(u_2)\|\leq C \left\|\nu^{1/2}u_2\right\|,
\end{equation*}
then $I_{3,1}$ is bounded by the right-hand term of \eqref{lem.xiu0.1}.
Next, for $ I_{3,2}$ and $ I_{3,3}$, one has
\begin{multline*}
I_{3,2}\leq \int_{\R^3}|\na_x\phi|\cdot \left\|\xi\nu^{-1/2}u_1\right\|_{L^2_\xi}\cdot \left\|\nu^{1/2}u_2\right\|_{L^2_\xi}\,dx\\
\leq C \int_{\R^3} |\na_x\phi|\cdot |(a,b,c)|\cdot \left\|\nu^{1/2}u_2\right\|_{L^2_\xi}\,dx\\
\leq C \|\na_x\phi\|_{H^1}\left\{\|\na_x (a,b,c)\|^2+\left\|\nu^{1/2}u_2\right\|^2\right\}
\end{multline*}
and
\begin{equation*}
  I_{3,3}\leq C \left\|\na_x\phi\right\|_{L^\infty_x}\left\|\lag \xi \rag^{1/2}u_2\right\|^2 \leq C\left\|\na_x^2\phi\right\|_{H^1}\left\|\lag \xi \rag^{1/2}u_2\right\|^2.
\end{equation*}
Therefore, \eqref{lem.xiu0.1} follows from all the above estimates. This completes the proof of Lemma \ref{lem.xiu0}.
\end{proof}

For the case of higher order derivatives with respect to  $x$ variable, we have

\begin{lemma}\label{lem.xiu1}
Let $N\geq 4$, $1\leq |\al|\leq N$, and $\ell\geq 0$. It holds that
\begin{multline}\label{lem.xiu1.1}
\left\lag \pa^\al \left(\frac{1}{2} \xi\cdot \na_x\phi u\right), w_{-\ell,q}^2(t,\xi) \pa^\al u \right\rag\\
\leq C\|\na_x^2\phi\|_{H^{N-1}}  \sum_{1\leq |\al|\leq N}\left\{\left\|\lag \xi \rag^{1/2} w_{-\ell,q}(t,\xi)\pa^\al\{\FI-\FP\} u\right\|^2
+\left\|\pa^\al (a,b,c)\right\|^2\right\}.
\end{multline}
\end{lemma}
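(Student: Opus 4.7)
The plan is to mimic the strategy of Lemma \ref{lem.xiu0}, the main difference being that one must now distribute $|\alpha|$ spatial derivatives between the potential $\nabla_x\phi$ and the solution $u$ through Leibniz. First I would write
\begin{equation*}
\partial^\alpha\!\left(\tfrac12\xi\cdot\nabla_x\phi\,u\right)=\tfrac12\sum_{\alpha_1+\alpha_2=\alpha} C^\alpha_{\alpha_1\alpha_2}\,\xi\cdot\nabla_x\partial^{\alpha_1}\phi\,\partial^{\alpha_2}u,
\end{equation*}
decompose $u=\FP u+\{\FI-\FP\}u=u_1+u_2$ in the factor $\partial^{\alpha_2}u$, and denote the resulting four-family of inner products with $w_{-\ell,q}^2\partial^\alpha u$ by $J_{\alpha_1\alpha_2}^{(i)}$, $i=1,2$, according to whether the middle factor comes from $u_1$ or $u_2$.

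For the macroscopic pieces $J_{\alpha_1\alpha_2}^{(1)}$, I would use that $\partial^{\alpha_2}u_1=\{\partial^{\alpha_2}a+\partial^{\alpha_2}b\cdot\xi+\partial^{\alpha_2}c(|\xi|^2-3)\}\FM^{1/2}$ decays exponentially in $\xi$, so that the full velocity weight $\lag\xi\rag\,w_{-\ell,q}^2(t,\xi)\FM^{1/2}(\xi)$ is uniformly bounded in $\xi$. After performing the $\xi$-integration this reduces each term to an $x$-integral of $|\nabla_x\partial^{\alpha_1}\phi|\cdot|\partial^{\alpha_2}(a,b,c)|\cdot\|\lag\xi\rag^{1/2}w_{-\ell,q}\partial^\alpha u\|_{L^2_\xi}$. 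A standard $L^\infty_x L^2_x L^2_x$ H\"older split—putting $L^\infty_x$ on whichever of $\nabla_x\partial^{\alpha_1}\phi$ or $\partial^{\alpha_2}(a,b,c)$ carries no more than $|\alpha|/2$ derivatives and using the Sobolev embedding $\|f\|_{L^\infty_x}\lesssim\|f\|_{H^2_x}$—produces the desired bound $\|\nabla_x^2\phi\|_{H^{N-1}}\bigl\{\|\partial^\alpha(a,b,c)\|^2+\|\lag\xi\rag^{1/2}w_{-\ell,q}\partial^\alpha u\|^2\bigr\}$ after Cauchy–Schwarz, the split further decomposing $\partial^\alpha u=\partial^\alpha u_1+\partial^\alpha\{\FI-\FP\}u$.

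For the microscopic pieces $J_{\alpha_1\alpha_2}^{(2)}$, I would keep the full velocity weight $w_{-\ell,q}^2(t,\xi)$ and bound the integrand by
\begin{equation*}
|\nabla_x\partial^{\alpha_1}\phi|\cdot\|\lag\xi\rag^{1/2}w_{-\ell,q}\partial^{\alpha_2}\{\FI-\FP\}u\|_{L^2_\xi}\cdot\|\lag\xi\rag^{1/2}w_{-\ell,q}\partial^\alpha u\|_{L^2_\xi},
\end{equation*}
absorbing $|\xi|\leq\lag\xi\rag$ symmetrically as $\lag\xi\rag^{1/2}\cdot\lag\xi\rag^{1/2}$ on the two $u$-factors. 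Again I split in $x$ by placing $L^\infty_x$ on whichever of $\nabla_x\partial^{\alpha_1}\phi$ or $\partial^{\alpha_2}\{\FI-\FP\}u$ has at most $|\alpha|/2$ spatial derivatives; the Sobolev inequality $\|\nabla_x\partial^{\alpha_1}\phi\|_{L^\infty_x}\lesssim\|\nabla_x^2\phi\|_{H^{N-1}}$ (valid since $|\alpha_1|\leq|\alpha|-1\leq N-1$ and $N\geq 4$) yields the desired estimate. Writing $\partial^\alpha u=\partial^\alpha u_1+\partial^\alpha\{\FI-\FP\}u$ once more and noting $\|\lag\xi\rag^{1/2}w_{-\ell,q}\partial^\alpha u_1\|\lesssim\|\partial^\alpha(a,b,c)\|$, the final sum yields \eqref{lem.xiu1.1}.

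The main obstacle, compared with \eqref{lem.xiu0.1}, is the bookkeeping needed to show that every Leibniz term can be arranged so that the factor receiving $L^\infty_x$ in the Sobolev estimate carries at most $N-1$ derivatives of $\phi$ (so that $\nabla_x^2\phi\in H^{N-1}$ suffices as the uniform coefficient on the right-hand side), while the other two $L^2_x$ factors fit into the weighted norms appearing in \eqref{lem.xiu1.1}. The hypothesis $|\alpha|\geq 1$ is crucial here because it allows at least one derivative to be peeled off $\phi$; the case $|\alpha|=0$ was precisely the content of Lemma \ref{lem.xiu0}, where the more delicate time-derivative cancellation with the $b$-equation was needed.
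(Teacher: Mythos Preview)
Your proposal is correct and follows essentially the same route as the paper. The paper organizes the computation into three pieces $I_{4,1},I_{4,2},I_{4,3}$ by decomposing \emph{both} the inner factor $u$ and the outer factor $\pa^\al u$ into macro/micro parts before applying Leibniz, whereas you decompose only the inner factor first and split $\pa^\al u=\pa^\al u_1+\pa^\al u_2$ at the end; the resulting case analysis is equivalent. The paper's threshold for deciding which factor receives $L^\infty_x$ is $N/2$ (with an additional clause $\al_1<\al$ to ensure $\na_x\pa^{\al-\al_1}\phi$ always carries at least two derivatives of $\phi$), while yours is $|\al|/2$; both work for $N\geq 4$. One small slip: in your parenthetical justification for $\|\na_x\pa^{\al_1}\phi\|_{L^\infty_x}\lesssim\|\na_x^2\phi\|_{H^{N-1}}$ you write ``$|\al_1|\leq|\al|-1\leq N-1$'', but the Sobolev embedding actually requires $|\al_1|+1\leq N-1$, which follows from your own split condition $|\al_1|\leq|\al|/2\leq N/2\leq N-2$ rather than from $|\al_1|\leq|\al|-1$.
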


\begin{proof}
Let $I_4$ be the left-hand term of \eqref{lem.xiu1.1}. Corresponding to $u=\FP u +\{\FI-\FP\}u$, set
\begin{multline}\label{lem.xiu1.p1}
I_4=\sum_{m=1}^3I_{4,m}=\left\lag \pa^\al \left(\frac{1}{2} \xi\cdot \na_x\phi u_1\right), w_{-\ell}^2(\xi) \pa^\al u \right\rag\\
+\left\lag \pa^\al \left(\frac{1}{2} \xi\cdot \na_x\phi u_2\right), w_{-\ell}^2(\xi) \pa^\al u_1 \right\rag
+\left\lag \pa^\al \left(\frac{1}{2} \xi\cdot \na_x\phi u_2\right), w_{-\ell}^2(\xi) \pa^\al u_2 \right\rag.
\end{multline}
For $I_{4,1}$, one has
\begin{multline*}
 I_{4,1}=\sum_{|\al_1|\leq |\al|}C^\al_{\al_1}\left\lag \frac{1}{2} \xi\cdot \na_x\pa^{\al-\al_1}\phi \pa^{\al_1}u_1, w_{|\al|-\ell}^2(\xi) \pa^\al u \right\rag\\
 \leq  C\sum_{|\al_1|\leq |\al|}\int_{\R^3} \left|\na_x\pa^{\al-\al_1}\phi\right|\cdot \left| \pa^{\al_1}(a,b,c)\right|\cdot \left\|\pa^\al u\right\|_{L^2_\xi}\,dx.
\end{multline*}
Here, when $|\al_1|\leq N/2$ and $|\al_1|<|\al|$, the integral term in the summation above is bounded by
\begin{equation*}
   \sup_{x}  \left| \pa^{\al_1}(a,b,c)\right|\cdot \left\|\na_x\pa^{\al-\al_1}\phi\right\|\cdot \left\|\pa^\al u\right\|,
\end{equation*}
and when $|\al_1|\geq N/2$ or $\al_1=\al$, it is bounded by
\begin{equation*}
    \sup_x\left|\na_x\pa^{\al-\al_1}\phi\right|\cdot \left\| \pa^{\al_1}(a,b,c)\right\|\cdot \left\|\pa^\al u\right\|.
\end{equation*}
Thus, by the Sobolev inequality, $ I_{4,1}$ is bounded by the right-hand term of  \eqref{lem.xiu1.1}.
Similarly, we have for $I_{4,2}$ that
\begin{multline*}
I_{4,2}\leq C\sum_{|\al_1|\leq |\al|} \int_{\R^3} \left|\na_x \pa^{\al-\al_1} \phi\right|\cdot \left\|\pa^{\al_1}u_2\right\|_{L^2_\xi}\cdot \left|\pa^\al (a,b,c)\right|\,dx\\
\leq C \sum_{\{|\al_1|\leq N/2\}\cap \{\al_1<\al\}} \sup_x \left\|\pa^{\al_1}u_2\right\|_{L^2_\xi}\cdot \left\|\na_x \pa^{\al-\al_1} \phi\right\|\cdot \left\|\pa^\al (a,b,c)\right\|\\
+ \sum_{\{|\al_1|\geq N/2\}\cup \{\al_1=\al\}} \sup_x \left|\na_x \pa^{\al-\al_1} \phi\right|\cdot \left\|\pa^{\al_1}u_2\right\|\cdot \left\|\pa^\al (a,b,c)\right\|,
\end{multline*}
which is also bounded by the right-hand term of  \eqref{lem.xiu1.1} by Sobolev's inequality.
Finally, for $I_{4,3}$, since
\begin{multline*}
I_{4,3}\leq C \sum_{|\al_1|\leq |\al|}\int_{\R^3} \left|\na_x \pa^{\al_1}\phi\right|\cdot
\left\||\xi|^{1/2}w_{-\ell}(\xi)\pa^{\al-\al_1}u_2\right\|_{L^2_\xi}\\
\times\left\||\xi|^{1/2}w_{-\ell}(\xi)\pa^{\al}u_2\right\|_{L^2_\xi}\,dx,
\end{multline*}
it can be further estimated as for $I_{4,1} $ and $I_{4,2}$.
Therefore, \eqref{lem.xiu1.1} follows from \eqref{lem.xiu1.p1} by collecting all the above estimates. This completes the proof of Lemma \ref{lem.xiu1}.
\end{proof}

For the case of higher order mixed derivatives with respect to both $\xi$ and $x$ variables, we have

\begin{lemma}\label{lem.xiu2}
Let $N\geq 4$, $1\leq |\al|+|\be|\leq N$ with $|\be|\geq 1$, and $\ell\geq |\be|$. It holds that
\begin{multline}\label{lem.xiu2.1}
\left\lag \pa^\al_\be \left(\frac{1}{2} \xi\cdot \na_x\phi \{\FI-\FP\}u\right),w_{|\be|-\ell,q}^2(t,\xi) \pa^\al_\be \{\FI-\FP\}u \right\rag \\
\leq C \|\na_x^2\phi\|_{H^{N-1}}\sum_{|\al|+|\be|\leq N } \left\|\lag \xi \rag^{1/2}w_{|\be|-\ell,q}(t,\xi)  \pa^\al_\be \{\FI-\FP\}u\right\|^2.
\end{multline}

\end{lemma}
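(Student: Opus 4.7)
The strategy is to Leibniz-expand $\pa^\al_\be\bigl(\xi\cdot\na_x\phi\,\{\FI-\FP\}u\bigr)$ and then estimate each resulting piece by Cauchy--Schwarz in $\xi$ and H\"older/Sobolev in $x$, mirroring the template already established in Lemma \ref{lem.xiu1}. Because $\phi$ depends only on $x$ and $\xi$ only on $\xi$, the Leibniz formula gives
\begin{equation*}
\pa^\al_\be\bigl(\xi\cdot\na_x\phi\,\{\FI-\FP\}u\bigr)=\sum_{\al_1\leq\al}\sum_{\be_1\leq\be} C^\al_{\al_1}C^\be_{\be_1}\,(\pa_{\be_1}\xi)\cdot(\na_x\pa^{\al-\al_1}\phi)\,\pa^{\al_1}_{\be-\be_1}\{\FI-\FP\}u.
\end{equation*}
The key simplification is that $\xi$ is linear, so $\pa_{\be_1}\xi$ vanishes unless $|\be_1|\leq 1$. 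Only two types of terms survive: \emph{Type A} with $\be_1=0$ (velocity coefficient $\xi$), and \emph{Type B} with $|\be_1|=1$ (coefficient equal to a constant unit vector $e_i$).

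For Type A, after pairing with $w_{|\be|-\ell,q}^2(t,\xi)\pa^\al_\be\{\FI-\FP\}u$, I apply Cauchy--Schwarz in $\xi$ and split the factor $|\xi|$ symmetrically as $\lag\xi\rag^{1/2}\cdot\lag\xi\rag^{1/2}$ on the two microscopic factors, producing integrals of the form
\begin{equation*}
\int_{\R^3}|\na_x\pa^{\al-\al_1}\phi|\cdot\bigl\|\lag\xi\rag^{1/2}w_{|\be|-\ell,q}\pa^{\al_1}_\be\{\FI-\FP\}u\bigr\|_{L^2_\xi}\cdot\bigl\|\lag\xi\rag^{1/2}w_{|\be|-\ell,q}\pa^\al_\be\{\FI-\FP\}u\bigr\|_{L^2_\xi}\,dx.
\end{equation*}
I then use H\"older in $x$, placing whichever factor carries fewer derivatives into $L^\infty_x$ via Sobolev embedding $H^2_x\hookrightarrow L^\infty_x$: when $|\al-\al_1|+1\leq N/2$ the $\phi$--factor is sup-bounded and absorbed into $\|\na_x^2\phi\|_{H^{N-1}}$, while when $|\al_1|\leq N/2$ the microscopic factor is sup-bounded and the remaining $\|\na_x\pa^{\al-\al_1}\phi\|_{L^2_x}$ is likewise absorbed. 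This is exactly the pattern used to treat $I_{4,3}$ in the proof of Lemma \ref{lem.xiu1}.

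For Type B the coefficient carries no $|\xi|$ growth, but the derivative index on the first microscopic factor drops to $|\be-\be_1|=|\be|-1$, so the weight $w_{|\be|-\ell,q}$ no longer matches the natural weight $w_{|\be-\be_1|-\ell,q}$ associated with a $(|\be|-1)$-velocity-derivative term. The essential observation is the monotonicity of the algebraic part of the weight in its index: since $\ga<0$ and $|\be-\be_1|-\ell<|\be|-\ell$,
\begin{equation*}
w_{|\be|-\ell,q}(t,\xi)=\lag\xi\rag^{\ga(|\be|-\ell)}e^{q(t)\lag\xi\rag^2}\leq \lag\xi\rag^{\ga(|\be-\be_1|-\ell)}e^{q(t)\lag\xi\rag^2}=w_{|\be-\be_1|-\ell,q}(t,\xi),
\end{equation*}
so one copy of $w_{|\be|-\ell,q}$ in the weight $w_{|\be|-\ell,q}^2$ can be freely promoted to $w_{|\be-\be_1|-\ell,q}$ at no cost. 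Inserting the harmless factor $1\leq\lag\xi\rag^{1/2}$ on each side and repeating the H\"older/Sobolev splitting above then produces the desired bound by $\|\na_x^2\phi\|_{H^{N-1}}$ times a square of the weighted norm indexed by $(\al_1,\be-\be_1)$ or $(\al,\be)$, both of which fall within the summation range $|\al|+|\be|\leq N$ on the right of \eqref{lem.xiu2.1}.

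The main conceptual point---and the only real obstacle beyond index bookkeeping---is the Type B weight comparison, which is precisely what makes the choice $w_{\tau,q}\sim\lag\xi\rag^{\ga\tau}$ with $\ga<0$ compatible with losing a $\be$-derivative: passing from $|\be|$ to $|\be|-1$ strengthens, rather than weakens, the algebraic weight. Once this is in hand, summing over the finitely many multi-indices $(\al_1,\be_1)$ and over the two Sobolev cases yields \eqref{lem.xiu2.1}.
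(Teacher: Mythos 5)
Your proof is correct and follows essentially the same route as the paper: the Leibniz expansion with $|\be_1|\leq 1$, the $\lag\xi\rag^{1/2}$ splitting after Cauchy--Schwarz in $\xi$, and the H\"older/Sobolev case distinction on whether the $\phi$-factor or the microscopic factor is placed in $L^\infty_x$. The only difference is cosmetic: you make explicit the weight monotonicity $w_{|\be|-\ell,q}\leq w_{|\be|-1-\ell,q}$ (valid since $\ga<0$ and $\lag\xi\rag\geq 1$), which the paper uses implicitly when absorbing the term indexed by $(\al_2,\be_2)$ into the right-hand sum.
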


\begin{proof}
Due to
\begin{multline*}
\left\lag \pa^\al_\be \left(\frac{1}{2} \xi\cdot \na_x\phi u_2\right),w_{|\be|-\ell}^2(\xi) \pa^\al_\be u_2 \right\rag\\
=\sum C_{\al_1\al_2}^\al C_{\be_1\be_2}^\be \left\lag \frac{1}{2} \pa_{\be_1}\xi\cdot \na_x\pa^{\al_1}\phi\pa_{\be_2}^{\al_2} u_2,  w_{|\be|-\ell}^2(\xi) \pa^\al_\be u_2 \right\rag,
\end{multline*}
where the summation is taken over $\al_1+\al_2=\al$ and $\be_1+\be_2=\be$ with $|\be_1|\leq 1$, and for simplicity we denote each integration term in the summation as $I_5$. We now prove \eqref{lem.xiu2.1} by considering the following two cases: For the case $|\al_1|\leq N/2$, we have
\begin{equation*}
I_5\leq C \sup_{x}\left|\na_x\pa^{\al_1}\phi\right|
 \cdot\left\||\xi|^{1/2}w_{|\be|-\ell}(\xi)\pa_{\be_2}^{\al_2} u_2\right\|\cdot \left\||\xi|^{1/2}w_{|\be|-\ell}(\xi)\pa_{\be}^{\al} u_2\right\|,
\end{equation*}
which is bounded by the right-hand term of \eqref{lem.xiu2.1}. For the case $|\al_1|\geq N/2$, it is easy to see that
\begin{equation*}
 I_5\leq C \sup_{x}\left\||\xi|^{1/2}w_{|\be|-\ell}(\xi)\pa_{\be_2}^{\al_2} u_2\right\|_{L^2_\xi}
\cdot\left\|\na_x\pa^{\al_1}\phi\right\|\cdot \left\||\xi|^{1/2}w_{|\be|-\ell}(\xi)\pa_{\be}^{\al} u_2\right\|,
\end{equation*}
which is also bounded by the right-hand term of \eqref{lem.xiu2.1}.
Therefore, \eqref{lem.xiu2} follows by combining both cases above. This completes the proof of Lemma \ref{lem.xiu2}.
\end{proof}

\subsection{Estimate on $\na_x\phi\cdot \na_\xi u$}

Now we turn to estimate $\na_x\phi\cdot \na_\xi u$. For the case with only derivatives with respect to  $x$ variable, we have

\begin{lemma}\label{lem.pu1}
Assume $-2\leq \ga<0$. Let $N\geq 4$, $1\leq |\al|\leq N$, and $\ell\geq 0$. It holds that
\begin{multline}\label{lem.pu1.1}
    \left\lag \pa^\al \left(\na_x\phi \cdot \na_\xi u\right), w_{-\ell,q}^2(t,\xi) \pa^\al u \right\rag\\
\leq C\left\|\na_x^2\phi\right\|_{H^{N-1}}\left\{\sum_{|\al|+|\be|\leq N,|\be|\leq 1}\left\|\lag \xi \rag w_{|\be|-\ell}\pa^\al_\be \{\FI-\FP\} u\|^2 +\|\na_x (a,b,c)\right\|_{H^{N-1}}^2\right\}.
\end{multline}
\end{lemma}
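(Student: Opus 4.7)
The plan is to apply Leibniz's rule to $\pa^\al(\na_x\phi\cdot \na_\xi u)$, split $u=\FP u+\{\FI-\FP\}u=u_1+u_2$ in both the $\na_\xi u$ factor and the outer $\pa^\al u$, and dispatch the resulting terms by whether the pieces are macroscopic or microscopic. The key dichotomy for the microscopic--microscopic term is whether all $x$-derivatives land on the $\na_\xi u$ factor ($\al_1=0$) or not ($\al_1\neq 0$).

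For any term involving a macroscopic factor $u_1$ in either location, the polynomial-in-$\xi$ part of $\na_\xi u_1$ (or $\pa^\al u_1$) against the Gaussian factor $\FM^{1/2}$ absorbs any polynomial velocity weight. Pointwise in $x$ the remaining $\xi$-integrals are controlled by $|\pa^{\cdot}(a,b,c)|$ times an exponentially decaying weighted norm of the microscopic factor, so the usual H\"older dispatch, placing $\na_x\pa^{\al_1}\phi$ in $L^\infty_x$ when $|\al_1|\leq N/2$ and in $L^2_x$ otherwise, together with Sobolev embedding, will produce contributions bounded by $\|\na_x^2\phi\|_{H^{N-1}}\|\na_x(a,b,c)\|_{H^{N-1}}^2$ and by the weighted microscopic energy at $|\be|\leq 1$.

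The main work is the pure microscopic term. When $\al_1=0$ the factor $\na_\xi\pa^\al u_2$ carries $|\al|+1$ derivatives in total, which saturates the energy-norm budget; a direct Cauchy--Schwarz would lose a derivative. I would instead write
\[
\na_\xi\pa^\al u_2\cdot \pa^\al u_2=\tfrac12\,\na_\xi\!\left[(\pa^\al u_2)^2\right]
\]
and integrate by parts in $\xi$, which is legal since $\na_x\phi$ is $\xi$-independent. This converts the integral into $-\tfrac12\int \na_x\phi\cdot(\pa^\al u_2)^2\,\na_\xi w_{-\ell,q}^2\,dxd\xi$. A direct differentiation of \eqref{def.w} yields $|\na_\xi w_{-\ell,q}^2|\leq C\lag\xi\rag w_{-\ell,q}^2$, the linear $\lag\xi\rag$-growth coming from $\na_\xi e^{q(t)\lag\xi\rag^2}$, so this is controlled by $C\|\na_x\phi\|_{L^\infty_x}\|\lag\xi\rag w_{-\ell,q}\pa^\al u_2\|^2$.

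When $\al_1\neq 0$ the total derivative count on $\na_\xi\pa^{\al-\al_1}u_2$ is at most $N$, so direct Cauchy--Schwarz is admissible, but care with weights is needed. I would split $w_{-\ell,q}^2=(\lag\xi\rag w_{-\ell,q})\cdot(\lag\xi\rag^{-1}w_{-\ell,q})$, pair the first factor with $\pa^\al u_2$, and place the residual weight $\lag\xi\rag^{-1}w_{-\ell,q}=\lag\xi\rag^{-1-\ga}w_{1-\ell,q}$ on $\na_\xi\pa^{\al-\al_1}u_2$. This is where the hypothesis $-2\leq \ga<0$ is decisive: it yields $\lag\xi\rag^{-1-\ga}\leq C\lag\xi\rag$, so the residual weight is dominated by $\lag\xi\rag w_{1-\ell,q}$, which is exactly the dissipative norm at $|\be|=1$. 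The Morrey--Sobolev bound $\|\na_x\phi\|_{L^\infty_x}\lesssim\|\na_x^2\phi\|_{H^1}\leq \|\na_x^2\phi\|_{H^{N-1}}$ together with the standard case split on whether $|\al_1|\leq N/2$ then closes the estimate. The main obstacle is precisely this weight shift: the sharp bound $\lag\xi\rag^{-1-\ga}\leq C\lag\xi\rag$ forces $\ga\geq-2$, and for $\ga<-2$ the residual exceeds $\lag\xi\rag$ so that the available $|\be|=1$ dissipation cannot absorb it, which is the source of the restriction to moderately soft potentials.
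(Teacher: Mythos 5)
Your argument follows the paper's proof in all essentials: the Leibniz expansion, the integration by parts in $\xi$ for the top-order term using $|\na_\xi w_{-\ell,q}^2|\leq C\lag\xi\rag w_{-\ell,q}^2$, and the weight redistribution $w_{-\ell,q}^2\leq \lag\xi\rag w_{1-\ell,q}\cdot\lag\xi\rag w_{-\ell,q}$ (equivalently $\lag\xi\rag^{-1-\ga}\leq C\lag\xi\rag$), which is exactly where the paper invokes $-2\leq\ga<0$; your identification of this weight shift as the obstruction to very soft potentials matches the paper's discussion.

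The one place your dispatch does not close as stated is the cross term created by splitting the \emph{outer} factor: with $\al_1=0$ you are left with $\lag\na_x\phi\cdot\na_\xi\pa^\al\{\FI-\FP\}u,\,w_{-\ell,q}^2\,\pa^\al\FP u\rag$, whose microscopic factor $\na_\xi\pa^\al\{\FI-\FP\}u$ carries $|\al|+1$ derivatives; for $|\al|=N$ this exceeds both the energy budget and the right-hand side of \eqref{lem.pu1.1}, which only sums over $|\al|+|\be|\leq N$. So the blanket claim that every term containing a macroscopic factor is controlled by an exponentially decaying weighted norm of the microscopic factor fails for this particular term. The repair is immediate and is what the paper implicitly does: either do not split the outer factor in the $\al_1=0$ term (the paper's $I_{6,1}$ keeps the full $\pa^\al u$, so the integration by parts applies to $(\pa^\al u)^2$ and the macroscopic contribution of the result is absorbed into $\|\na_x(a,b,c)\|_{H^{N-1}}^2$), or perform the same $\xi$-integration by parts in the cross term, throwing the derivative onto $w_{-\ell,q}^2\,\pa^\al\FP u$, which together with its $\xi$-gradient decays exponentially in $\xi$ and is bounded by $C|\pa^\al(a,b,c)|$. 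With that adjustment your proof is complete and coincides with the paper's.
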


\begin{proof}
Denote the left-hand side of \eqref{lem.pu1.1} by $I_6$ and write it as
\begin{equation*}
    I_6=I_{6,1}+\sum_{\al_1+\al_2=\al,|\al_1|\geq 1}C^{\al}_{\al_1\al_2}\{I_{6,21}(\al_1)+I_{6,22}(\al_1)\}
\end{equation*}
with
\begin{eqnarray*}
  I_{6,1} &=&  \left\lag  \na_x\phi \cdot \na_\xi \pa^\al u, w_{-\ell}^2(\xi) \pa^\al u \right\rag,\\
  I_{6,21}(\al_1)&=& \left\lag  \na_x\pa^{\al_1}\phi \cdot \na_\xi \pa^{\al_2} u_1, w_{-\ell}^2(\xi) \pa^\al u \right\rag,\\
   I_{6,22}(\al_1)&=& \left\lag  \na_x\pa^{\al_1}\phi \cdot \na_\xi \pa^{\al_2} u_2, w_{-\ell}^2(\xi) \pa^\al u \right\rag.
\end{eqnarray*}
We estimate term by  term as follows. To estimate $I_{6,1}$,  notice
\begin{multline*}
  \na_\xi w_{-\ell}^2(\xi)=(-2\ga\ell) \lag \xi \rag^{-2\ga\ell-1}\na_\xi \lag \xi \rag e^{2q(t)\lag \xi\rag^2}\\
   + \lag \xi\rag^{-2\ga\ell}e^{2q(t)\lag \xi\rag^2}\cdot 2q(t)\na_\xi \lag \xi\rag^2
   \leq C \lag \xi\rag^{1-2\ga\ell}e^{2q(t)\lag \xi\rag^2}= C\lag \xi \rag w_{-\ell}^2(\xi),
\end{multline*}
where $\lag \xi \rag\geq 1$ and the fact that both $q(t)=q+\la/(1+t)^{\vth}$ and $\na_\xi \lag \xi \rag$ are bounded by a constant independent of $t$ and $\xi$ have been used. Then, from integration by part,
\begin{equation*}
   I_{6,1} =\left\lag -\frac{1}{2} \na_x\phi\cdot \na_\xi \{ w_{-\ell}^2(\xi)\},|\pa^\al u|^2\right\rag \\
\leq C \sup_x\left|\na_x\phi\right|\cdot \left\|\lag \xi \rag^{1/2}w_{-\ell}(\xi)\pa^\al u\right\|^2,
\end{equation*}
which is bounded by the right-hand side of \eqref{lem.pu1.1}.
For $ I_{6,21}(\al_1)$, it is straightforward to estimate it by
\begin{multline*}
I_{6,21}(\al_1)\leq C\int_{\R^3}\left|\na_x\pa^{\al_1}\phi\right|\cdot \left|\pa^{\al_2}(a,b,c)\right|\cdot \left\|\pa^\al u\right\|_{L^2_\xi}\,dx\\
\leq C\left\|\na_x^2\phi\right\|_{H^{N-1}} \left\{\left\|\na_x (a,b,c)\right\|_{H^{N-1}}^2+\left\|\pa^\al u\right\|^2\right\}.
\end{multline*}
To estimate $ I_{6,22}(\al_1)$, notice $\lag \xi \rag^{\ga+2}\geq 1$ due to $-2\leq \ga<0$ so that
\begin{equation*}
    w_{-\ell}^2(\xi)\leq \lag \xi \rag w_{1-\ell}(\xi)\cdot \lag \xi \rag w_{-\ell}(\xi).
\end{equation*}
Thus,
\begin{multline*}
 I_{6,22}(\al_1)\leq C \int_{\R^3} \left|\na_x\pa^{\al_1}\phi\right|  \cdot\left\|\lag \xi \rag w_{1-\ell}(\xi)\na_\xi \pa^{\al_2} u_2\right\|_{L^2_\xi}\cdot \left\|\lag \xi \rag w_{-\ell}(\xi)\pa^\al u \right\|_{L^2_\xi}\,dx\\
 \leq C\left\|\na_x^2\phi\right\|_{H^{N-1}}\left\{\sum_{|\al|+|\be|\leq N,|\be|\leq 1}\left\|\lag \xi \rag w_{|\be|-\ell}\pa^\al_\be u_2\right\|^2\right. \\
 \left. +\left\|\lag \xi \rag w_{-\ell}(\xi)\pa^\al u \right\|^2\right\},
\end{multline*}
which is further bounded by the right-hand side of \eqref{lem.pu1.1}. By collecting all the above estimates, it then completes the proof of Lemma \ref{lem.pu1}.
\end{proof}

For the case of higher order mixed derivatives with respect to  the $\xi$ and $x$ variables, we have

\begin{lemma}\label{lem.pu2}
Assume $-2\leq \ga<0$. Let $1\leq |\al|+|\be|\leq N$ with $|\be|\geq 1$, and $\ell\geq |\be|$. It holds that
\begin{multline}\label{lem.pu2.1}
\left\lag \pa^\al_\be \left(\na_x\phi \cdot \na_\xi \{\FI-\FP\}u\right),w_{|\be|-\ell,q}^2(t,\xi) \pa^\al_\be \{\FI-\FP\}u \right\rag \\ \leq C\left\|\na_x^2\phi\right\|_{H^{N-1}} \sum_{|\al|+|\be|\leq N} \left\|\lag \xi \rag w_{|\be|-\ell,q}(t,\xi)\pa_\be^\al \{\FI-\FP\}u\right\|^2.
\end{multline}

\end{lemma}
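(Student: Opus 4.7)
The plan is to mirror the structure of Lemma \ref{lem.pu1}. Since $\nabla_x\phi$ is independent of $\xi$, the Leibniz rule collapses to
\[
\pa^\al_\be(\na_x\phi\cdot\na_\xi u_2)=\sum_{\al_1+\al_2=\al}C^\al_{\al_1}\,\na_x\pa^{\al_1}\phi\cdot\na_\xi\pa^{\al_2}_\be u_2,
\]
with $u_2=\{\FI-\FP\}u$. I would split this sum according to whether $\al_1=0$ (all spatial derivatives land on $u_2$) or $\al_1\neq 0$; these two cases require different handling because $\na_\xi$ raises the $\be$-order from $|\be|$ to $|\be|+1$.

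For the $\al_1=0$ term, I cannot simply bound $\na_\xi\pa^\al_\be u_2$ pointwise, since this would produce a factor indexed by $|\be|+1$ and $|\al|+|\be|$ might already equal $N$. Instead, I would integrate by parts in $\xi$:
\[
\left\lag\na_x\phi\cdot\na_\xi\pa^\al_\be u_2,\,w_{|\be|-\ell}^2(\xi)\pa^\al_\be u_2\right\rag
=-\tfrac{1}{2}\left\lag\na_x\phi\cdot\na_\xi w_{|\be|-\ell}^2(\xi),\,|\pa^\al_\be u_2|^2\right\rag.
\]
Exactly as in the computation of $I_{6,1}$ in Lemma \ref{lem.pu1}, the bound $|\na_\xi w_{|\be|-\ell}^2(\xi)|\le C\lag\xi\rag\, w_{|\be|-\ell}^2(\xi)$ (which uses only that $q(t)$ and $\na_\xi\lag\xi\rag$ are bounded) reduces this to $C\sup_x|\na_x\phi|\cdot\|\lag\xi\rag^{1/2}w_{|\be|-\ell}\pa^\al_\be u_2\|^2$, which by Sobolev fits inside the right-hand side of \eqref{lem.pu2.1}.

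For $\al_1\neq 0$, I would invoke the same weight-splitting trick as in $I_{6,22}(\al_1)$: since $-2\leq\ga<0$ gives $\lag\xi\rag^{2+\ga}\geq 1$, one has
\[
w_{|\be|-\ell}^2(\xi)\le \lag\xi\rag\, w_{|\be|+1-\ell}(\xi)\cdot \lag\xi\rag\, w_{|\be|-\ell}(\xi).
\]
Cauchy-Schwarz in $\xi$ then bounds the integrand by
\[
|\na_x\pa^{\al_1}\phi|\cdot \left\|\lag\xi\rag\, w_{|\be|+1-\ell}\,\na_\xi\pa^{\al_2}_\be u_2\right\|_{L^2_\xi}\!\cdot\!\left\|\lag\xi\rag\, w_{|\be|-\ell}\,\pa^\al_\be u_2\right\|_{L^2_\xi}.
\]
Identifying $\na_\xi\pa^{\al_2}_\be u_2=\pa^{\al_2}_{\be'}u_2$ with $|\be'|=|\be|+1$, the total order is $|\al_2|+|\be'|\le (|\al|-1)+(|\be|+1)=|\al|+|\be|\le N$, so this term belongs to the admissible sum on the right of \eqref{lem.pu2.1}. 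Finally, an $L^\infty_x$-$L^2_x$-$L^2_x$ split on the $x$-integral, placing $\sup_x$ on $|\na_x\pa^{\al_1}\phi|$ when $|\al_1|\le N/2$ and on the first $u_2$-factor when $|\al_1|\ge N/2$, together with Sobolev's embedding $\|\cdot\|_{L^\infty_x}\lesssim \|\cdot\|_{H^2_x}$ (which is affordable since $N\geq 8$), yields the bound by $C\|\na_x^2\phi\|_{H^{N-1}}$ times the target sum.

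The main obstacle is the fact that $\na_\xi$ raises the $\be$-derivative order by one, so a naive pointwise estimate on $\na_\xi\pa^{\al_2}_\be u_2$ could push the total derivative count beyond $N$. This is precisely what forces the separate treatment of the $\al_1=0$ term via integration by parts; once that case is handled, the remaining terms gain the gap $|\al_2|\le |\al|-1$ that makes the weight-splitting argument close. The use of $-2\leq\ga<0$ is essential because otherwise the factor $\lag\xi\rag^{2+\ga}$ would be singular at large velocities and one could not absorb the velocity growth produced by $\na_\xi$ into the available dissipation $\lag\xi\rag w_{|\be|-\ell}$.
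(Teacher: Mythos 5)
Your proposal is correct and is essentially the paper's own argument: the paper's proof of Lemma \ref{lem.pu2} consists precisely of the decomposition into the $\al_1=0$ term and the $|\al_1|\geq 1$ terms, which it then dispatches by reference to the treatments of $I_{6,1}$ (integration by parts in $\xi$) and $I_{6,22}(\al_1)$ (the weight splitting via $\lag\xi\rag^{2+\ga}\geq 1$) in Lemma \ref{lem.pu1}, omitting the details. You have supplied exactly those details, including the key bookkeeping that $|\al_2|+|\be|+1\leq|\al|+|\be|\leq N$ keeps the extra $\xi$-derivative inside the admissible sum on the right-hand side.
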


\begin{proof}
Similar to the proof of Lemma \ref{lem.pu1}, one can rewrite the left-hand side of \eqref{lem.pu2.1} as
\begin{multline*}
\left\lag \na_x\phi \cdot \na_\xi  \pa^\al_\be \{\FI-\FP\}u,w_{|\be|-\ell}^2(\xi) \pa^\al_\be \{\FI-\FP\}u \right\rag\\
+\sum_{\al_1+\al_2=\al,|\al_1|\geq 1}C_{\al_1\al_2}^\al
\left\lag \na_x\pa^{\al_1}\phi \cdot \na_\xi  \pa^{\al_2}_\be \{\FI-\FP\}u,w_{|\be|-\ell}^2(\xi) \pa^\al_\be \{\FI-\FP\}u \right\rag.
\end{multline*}
Then, similar to arguments used to deal with $I_{6,1}$ and $I_{6,22}(\al_1)$ in  Lemma \ref{lem.pu1}, \eqref{lem.pu2.1} follows and the details are omitted for brevity. This completes the proof of Lemma \ref{lem.pu2}.
\end{proof}

\section{A priori estimates}\label{sec5}

We are ready to deduce the uniform-in-time {\it a priori} estimates on the solution to the VPB system. For this purpose, we define
the time-weighted sup-energy $ X_{N,\ell,q}(t)$ by \eqref{def.x},
and suppose that the Cauchy problem \eqref{vpb1}-\eqref{vpb3} of the VPB system admits a smooth solution $u(t,x,\xi)$ over $0\leq t<T$ for $0<T\leq \infty$. We will deduce some energy type estimates on the basis of the following {\it a priori} assumption
\begin{equation}\label{apa}
    \sup_{0\leq t<T}X_{N,\ell,q}(t)\leq \de,
\end{equation}
where $\de>0$ is a suitably chosen sufficiently small positive constant.

\subsection{Dissipation of $(a,b,c)$}

Recall the fluid-type system \eqref{moment.l}-\eqref{moment.h} derived from the VPB system \eqref{vpb1}-\eqref{vpb2} and also \eqref{def.moment} for the definition of the high-order moment functions $\Theta(\cdot)$ and $\Lambda(\cdot)$. In this subsection, we are concerned with the dissipation of the macroscopic component $(a,b,c)$.

\begin{lemma}
There is a temporal interactive functional $\CE_{N}^{\rm int}(t)$ such that
\begin{multline}
\label{lem.ma.1}
   |\CE_{N}^{\rm int}(t)|
   \leq C \left\{\|a\|^2+\sum_{|\al|\leq N-1} \left(\left\|\pa^\al \{\FI-\FP\} u(t)\right\|^2
   +\left\|\pa^\al \na_x(a,b,c)\right\|^2\right)\right\}
\end{multline}
and
\begin{multline}\label{lem.ma.2}
\frac{d}{dt} \CE_{N}^{\rm int}(t) +\ka \left\{\|a\|^2+\sum_{|\al|\leq N-1}\left\|\pa^\al \na_x (a,b,c)\right\|^2\right\}\\
\leq C\sum_{|\al|\leq N} \left\|\nu^{1/2}\pa^\al \{\FI-\FP\} u\right\|^2\\
+C\CE_{N,\ell}(t)\left\{\sum_{|\al|\leq N}\left\|\nu^{1/2}\pa^\al \{\FI-\FP\}u\right\|^2+ \sum_{1\leq |\al|\leq N}\left\|\pa_x^\al (a,b,c)\right\|^2\right\}
\end{multline}
hold for any $0\leq t<T$, where the simplified notion $\CE_{N,\ell}(t)=\CE_{N,\ell,q}(t)$ with $q=0$ has been used.
\end{lemma}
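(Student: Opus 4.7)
\medskip

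\noindent\textbf{Proof proposal.} The plan is to construct the interactive functional $\CE_N^{\rm int}(t)$ in the usual macro-dissipation spirit of \cite{Guo-IUMJ,DS-VPB,D-1VMB}, tailored so that its time derivative, when combined with the fluid-type system \eqref{moment.l}--\eqref{moment.h}, yields exactly the three macroscopic dissipation quantities appearing on the left-hand side of \eqref{lem.ma.2}. Concretely, I would take
\begin{equation*}
\CE_N^{\rm int}(t)=\sum_{|\al|\leq N-1}\Bigl\{\lag \pa^\al \na_x c,\pa^\al \Lambda(\{\FI-\FP\}u)\rag +\kappa_1\sum_{ij}\lag \pa^\al(\pa_i b_j+\pa_j b_i-\tfrac{2}{3}\de_{ij}\na_x\cdot b),\pa^\al\Theta_{ij}(\{\FI-\FP\}u)\rag \Bigr\}+\kappa_2\sum_{|\al|\leq N-1}\lag \pa^\al \na_x a,\pa^\al b\rag
\end{equation*}
for constants $0<\kappa_2\ll \kappa_1\ll 1$ to be chosen. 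The first two groups are modeled on the Fourier-side functional used in Step 1 of the proof of Theorem~\ref{thm.lide}, while the last group will be tuned to extract $\|\na_x\pa^\al a\|^2$. The bound \eqref{lem.ma.1} then follows directly from the Cauchy--Schwarz inequality together with the estimate $\|\Theta(\{\FI-\FP\}u)\|+\|\Lambda(\{\FI-\FP\}u)\|\leq C\|\{\FI-\FP\}u\|$ coming from \eqref{def.moment}.

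Next, I would differentiate $\CE_N^{\rm int}(t)$ in $t$ and substitute $\pa_t c$, $\pa_t b$, $\pa_t a$ from \eqref{moment.l} and $\pa_t \Lambda,\pa_t \Theta$ from \eqref{moment.h}. The leading terms produce, after integration by parts in $x$, the three desired dissipation quantities: the $\pa^\al\na c$--$\pa^\al\Lambda$ pair gives $\|\na_x\pa^\al c\|^2$ (up to microscopic and nonlinear remainders), the symmetrized $\pa^\al(\pa_i b_j+\cdots)$--$\pa^\al\Theta_{ij}$ pair gives $\|\na_x\pa^\al b\|^2$ after noticing that $\sum_{ij}(\pa_i b_j+\pa_j b_i-\tfrac{2}{3}\de_{ij}\na\cdot b)^2\sim |\na b|^2$, and the $\pa^\al\na a$--$\pa^\al b$ pair gives $\|\na_x\pa^\al a\|^2$ after integration by parts together with the continuity equation $\pa_t a+\na_x\cdot b=0$. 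All resulting microscopic remainders are controlled by $\sum_{|\al|\leq N}\|\nu^{1/2}\pa^\al\{\FI-\FP\}u\|^2$ since $\Theta,\Lambda$ and their moments involving $r=-\xi\cdot\na_x\{\FI-\FP\}u+\FL u$ are dominated by this quantity.

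The zero-order $\|a\|^2$ dissipation requires a separate argument: from the second equation of \eqref{moment.l}, test against $-\na_x\phi$. The term $-\lag \na_x\phi,-\na_x\phi\rag=\|\na_x\phi\|^2=\|\lag\na\rag^{-1}a\|^2$ together with the fourth equation $\De_x\phi=a$ gives, after bounding the time-boundary contribution $\pa_t\lag b,-\na_x\phi\rag$ by quantities already inside $\CE_N^{\rm int}$, a term bounded below by $\ka\|a\|^2$. This extra piece is then absorbed into $\CE_N^{\rm int}(t)$ via the coefficient $\kappa_2$; note that its upper bound is already controlled by $\|a\|^2+\|\pa^\al\na_x(a,b,c)\|^2$, consistent with \eqref{lem.ma.1}.

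The nonlinear contributions from $G=\Ga(u,u)+\tfrac12\xi\cdot\na_x\phi u-\na_x\phi\cdot\na_\xi u$ inside $\Theta(r+G),\Lambda(r+G)$, together with the right-hand sides of \eqref{moment.l}, are handled by Lemmas~\ref{lem.ga1}--\ref{lem.pu2} restricted to the exponentially decaying velocity weight of $\Theta,\Lambda$; they produce the factor $\CE_{N,\ell}(t)$ in the last line of \eqref{lem.ma.2}. The main obstacle I anticipate is the careful handling of the nonlocal electric field terms $\na_x\phi\cdot b$ and $a\na_x\phi$ appearing in the macroscopic equations: they must be split via Sobolev embedding so that the ``high-frequency'' factor sits in $L^\infty_x$ and the ``low-frequency'' factor fits into the dissipation, which is exactly where the a priori smallness \eqref{apa} of $X_{N,\ell,q}(t)$ is used to close the estimate.
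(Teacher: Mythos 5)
Your construction of $\CE_N^{\rm int}(t)$ is essentially the paper's: the $\na_x\pa^\al c$--$\Lambda$ pairing, the symmetrized $\na_x\pa^\al b$--$\Theta_{ij}$ pairing, and the $a$--$b$ coupling term (the paper writes it as $-\kappa\int\pa^\al a\,\pa^\al\na_x\cdot b\,dx$, which is your $+\kappa_2\lag\pa^\al\na_x a,\pa^\al b\rag$ after integration by parts), and the paper itself proves the lemma only by pointing to the analogous computation in \cite[Theorem 5.2]{DS-VPB}. Your outline of how the dissipation terms and remainders arise is the standard one and matches that reference.

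One step of your argument, however, is wrong as written, although it is also unnecessary. For the $\|a\|^2$ dissipation you propose to test the momentum equation against $-\na_x\phi$ and claim that $-\lag\na_x\phi,-\na_x\phi\rag=\|\na_x\phi\|^2$ is ``bounded below by $\ka\|a\|^2$'' via $\De_x\phi=a$. This is false: $\|\na_x\phi\|^2=\|\na_x\De_x^{-1}a\|^2$ is the $\dot H^{-1}_x$ norm of $a$, which does not control $\|a\|^2$. The correct mechanism — and the reason no separate argument is needed — is that the coupling term $\kappa_2\lag\pa^\al\na_x a,\pa^\al b\rag$ already in your functional produces, upon substituting $\pa_t b$ from the second equation of \eqref{moment.l}, the contribution $\kappa_2\lag\pa^\al\na_x a,\pa^\al\na_x\phi\rag=-\kappa_2\|\pa^\al a\|^2$ (integrating by parts and using $\De_x\phi=a$); this is exactly where $\|a\|^2$, and indeed $\|\pa^\al a\|^2$ for all $|\al|\leq N-1$, enters the left-hand side of \eqref{lem.ma.2}. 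With that correction, and noting that the bad term $+\kappa_2\|\pa^\al\na_x\cdot b\|^2$ coming from $\pa_t a=-\na_x\cdot b$ in the same pairing is absorbed by the $b$-dissipation of the $\Theta$ pairing precisely because $\kappa_2\ll\kappa_1$, your proof goes through along the paper's lines.
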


\begin{proof}
Basing on the analysis of the macro fluid-type system  \eqref{moment.l}-\eqref{moment.h}, the desired estimates follow by repeating the arguments used in the proof of \cite[Theorem 5.2]{DS-VPB} for the hard-sphere case and hence details are omitted. Here, we only point out the representation of $ \CE_N^{\rm int}(t)$ as
\begin{multline*}
  \CE^{{\rm int}}_{N}(t) =\sum_{|\al|\leq N-1}\int_{\R^3}\na_x\pa^\al c\cdot \Lambda(\pa^\al\{\FI-\FP\}u)\,dx \\
  +\sum_{|\al|\leq N-1}\sum_{ij=1}^3\int_{\R^3}
\left(\pa_i\pa^\al b_j+\pa_j\pa^\al b_i-\frac{2}{3}\de_{ij}\na_x\cdot \pa^\al b\right)
\Theta_{ij}(\pa^\al\{\FI-\FP\}u)\,dx\\
-\kappa\sum_{|\al|\leq N-1}\int_{\R^3}\pa^\al a \pa^\al\na_x\cdot b \,dx
\end{multline*}
for a constant $\kappa>0$ small enough.
\end{proof}

\subsection{Construction of $\CE_{N,\ell,q}(t)$}

We are ready to prove the energy inequality \eqref{est.e}. In this subsection we consider the proof \eqref{est.e} in the following

\begin{lemma}\label{lem.ee}
Assume $-2\leq \ga<0$. Let $N\geq 8$, $\ell\geq N$ and $q(t)=q+\frac{\la}{(1+t)^{\vth}} > 0$ with $0\leq q\ll 1$, $0<\la \ll 1$ and $0<\vth\leq 1/4$. Suppose that the {\it a priori} assumption \eqref{apa} holds for $\de>0$ small enough. Then, there is $\CE_{N,\ell,q}(t)$ satisfying \eqref{def.e} such that
\begin{equation}\label{lem.ee.1}
    \frac{d}{dt} \CE_{N,\ell,q}(t)+\kappa \CD_{N,\ell,q}(t)\leq 0
\end{equation}
holds for any $0\leq t<T$, where $\CD_{N,\ell,q}(t)$ is given by \eqref{def.ed}.
\end{lemma}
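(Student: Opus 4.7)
The plan is to construct $\CE_{N,\ell,q}(t)$ as a suitable linear combination of (i) pure $x$-derivative energies of $u$ without weight, (ii) pure $x$-derivative weighted energies with weight $w_{-\ell,q}^2$, (iii) mixed $x$-$\xi$ derivative weighted energies of $\{\FI-\FP\}u$ with weight $w_{|\be|-\ell,q}^2$, and (iv) the interactive macroscopic functional $\CE_N^{\rm int}(t)$ from \eqref{lem.ma.1}--\eqref{lem.ma.2}, together with the Poisson contribution $\|\na_x\phi\|_{H^N}^2$. The coefficients will be chosen in a hierarchical way: the mixed-derivative blocks will carry the smallest constants, the weighted pure-$x$ block a mid-size constant, and the unweighted pure-$x$ block of size $O(1)$, so that all cross-control terms are absorbed.

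First, for $|\al|\le N$ I would apply $\pa^\al$ to \eqref{vpb1}, take the $L^2_{x,\xi}$ inner product with $\pa^\al u$, sum over $|\al|\le N$, and use the coercivity \eqref{coerc}. The linear streaming term $\xi\cdot\na_x u$ is skew. The Poisson source $\na_x\phi\cdot\xi\FM^{1/2}$ combined with $\pa_t u$ and the Poisson equation produces the standard $\frac12\frac{d}{dt}\|\na_x\phi\|_{H^N}^2$ after integrating by parts in $x$ (using $\pa_t a+\na_x\cdot b=0$). The nonlinear terms are then controlled by Lemma~\ref{lem.xiu0}, Lemma~\ref{lem.xiu1}, Lemma~\ref{lem.pu1} and Lemma~\ref{lem.ga1}, Lemma~\ref{lem.ga2}; under the smallness assumption \eqref{apa}, all nonlinear contributions are either absorbed into the left-hand dissipation or bounded by $\sqrt{\CE_{N,\ell,q}}\,\CD_{N,\ell,q}$. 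Adding a small multiple of \eqref{lem.ma.2} restores the macroscopic dissipation $\|a\|^2+\sum_{|\al|\le N-1}\|\na_x\pa^\al(a,b,c)\|^2$.

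Next, for the weighted pure-$x$ estimates, I would act by $\pa^\al$ on \eqref{vpb1} and pair with $w_{-\ell,q}^2(t,\xi)\pa^\al u$. The crucial gain comes from the identity
\begin{equation*}
w_{-\ell,q}^2(t,\xi)\,\pa_t\pa^\al u=\tfrac12\pa_t\bigl(w_{-\ell,q}^2\,|\pa^\al u|^2\bigr)+\tfrac{\la\vth\lag\xi\rag^2}{(1+t)^{1+\vth}}\,w_{-\ell,q}^2\,|\pa^\al u|^2,
\end{equation*}
which produces the extra time-decaying dissipation $(1+t)^{-(1+\vth)}\|\lag\xi\rag w_{-\ell,q}\pa^\al u\|^2$ appearing in $\CD_{N,\ell,q}$. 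The streaming term $\xi\cdot\na_x$ is still skew for the weighted norm because the weight is independent of $x$. The operator $\FL=-\nu+K$ contributes the coercive dissipation $\|\nu^{1/2}w_{-\ell,q}\pa^\al\{\FI-\FP\}u\|^2$ via the weighted $\nu$-estimate and \eqref{lem.nuk.2}, where the localized residual term $\chi_{|\xi|\le 2C_\eta}$ is absorbed into the macroscopic dissipation. The nonlinear pieces are handled by Lemmas~\ref{lem.xiu1}, \ref{lem.pu1}, \ref{lem.ga2} (with $|\be|=0$). The subtle point here is that $\na_x\phi\cdot\na_\xi u$ forces a factor $\lag\xi\rag$ onto $\pa^\al u$; this is exactly why we kept $\lag\xi\rag^2$ in the exponential weight and why we need $-2\le\ga<0$ so that $\lag\xi\rag^{\ga+2}\ge 1$.

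Finally, for the mixed derivatives I would use induction on $|\be|$ from $1$ to $N-|\al|$. Applying $\pa^\al_\be$ to the equation for $\{\FI-\FP\}u$ and pairing with $w_{|\be|-\ell,q}^2\pa^\al_\be\{\FI-\FP\}u$, one generates (a) the weighted $\nu$-coercivity, (b) the extra dissipation $(1+t)^{-(1+\vth)}\|\lag\xi\rag w_{|\be|-\ell,q}\pa^\al_\be\{\FI-\FP\}u\|^2$ from $\pa_t$ hitting the exponential, and (c) commutator terms from $[\pa_\be,\xi\cdot\na_x]$. The crucial structural point, following \cite{Guo-BE-s}, is that the commutator $\pa_{\be-e_j}\pa_{x_j}u$ is pre-multiplied by $w_{|\be|-\ell,q}^2$ whereas the induction hypothesis controls it at weight $w_{|\be|-1-\ell,q}^2$; since $\ell\ge|\be|$, the ratio $w_{|\be|-\ell,q}^2/w_{|\be|-1-\ell,q}^2=\lag\xi\rag^{2\ga}\le\lag\xi\rag^{-2}$ precisely absorbs the $\lag\xi\rag$ loss from $\xi$ using the $\nu^{1/2}w_{|\be|-1-\ell,q}$ norm in $\CD_{N,\ell,q}$. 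The nonlinear pieces are bounded by Lemmas~\ref{lem.xiu2}, \ref{lem.pu2}, \ref{lem.ga2}, and for $\na_x\phi\cdot\na_\xi\{\FI-\FP\}u$ one uses $-2\le\ga<0$ exactly as in Lemma~\ref{lem.pu2}. Choosing the constants in the linear combination so that the mixed-derivative blocks are weighted by small parameters $C_1\gg C_2\gg\cdots\gg C_N>0$ absorbs all the cross terms between different $|\be|$ levels.

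The main obstacle I expect is the interplay between the weight $w_{|\be|-\ell,q}$ and the nonlinear term $\na_x\phi\cdot\na_\xi u$: this is the only place where an honest use of $-\ga\le 2$ is necessary, and it is also the reason why the ``second-order'' exponential factor $\lag\xi\rag^2$ (as opposed to $\lag\xi\rag$ used in the hard-potential case \cite{DYZ}) is indispensable. After setting up $\CE_{N,\ell,q}(t)$ explicitly as the above weighted combination and invoking \eqref{apa} to absorb all $\sqrt{\CE_{N,\ell,q}}\,\CD_{N,\ell,q}$ terms into the dissipation, \eqref{lem.ee.1} will follow, with the explicit form of $\CE_{N,\ell,q}(t)$ given by the combination as \eqref{lem.ee.p10}.
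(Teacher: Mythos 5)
Your proposal is correct and follows essentially the same route as the paper's proof: unweighted $x$-derivative estimates combined with the interactive macroscopic functional $\CE_N^{\rm int}$, weighted pure-$x$ and mixed $x$-$\xi$ derivative estimates that harvest the extra $(1+t)^{-1-\vth}\lag\xi\rag^2$ dissipation from the time-dependent weight, and a hierarchical linear combination exactly as in \eqref{lem.ee.p10}. Two small imprecisions to fix when writing details: the zeroth-order weighted estimate should be performed on the microscopic equation for $\{\FI-\FP\}u$ (pairing the full equation with $w_{-\ell,q}^2u$ at $\al=0$ leaves $\|(b,c)\|^2$ uncontrolled by $\CD_{N,\ell,q}$), and in the commutator step $\pa_{\be_1}\xi$ is constant for $|\be_1|=1$ so there is no $\lag\xi\rag$ loss --- the weight ratio $\lag\xi\rag^{2\ga}$ is what offsets the $\nu^{-1}\sim\lag\xi\rag^{-\ga}$ arising from Cauchy--Schwarz, and it need not satisfy $\lag\xi\rag^{2\ga}\le\lag\xi\rag^{-2}$ (which in fact fails for $-1<\ga<0$).
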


\begin{proof}
It is divided by three steps as follows. Recall the VPB system \eqref{vpb1}-\eqref{vpb2}.

\medskip

\noindent{\bf Step 1.} Energy estimates without any weight: First we consider the zero$-$th order energy type estimates. Multiplying \eqref{vpb1} by $u$ and integrating it over $\R^3\times \R^3$ gives
\begin{equation*}
    \frac{1}{2}\frac{d}{dt}\left\{\|u\|^2+\|\na_x\phi\|^2\right\}-\lag \FL u, u\rag=\lag \Ga(u,u), u\rag+ \left\lag \frac{1}{2}\xi \cdot \na_x\phi u,u\right\rag.
\end{equation*}
By applying \eqref{coerc}, \eqref{lem.ga1.1} and \eqref{lem.xiu0.1} to estimate three inner product terms above and then using \eqref{apa}, one has
\begin{multline}\label{lem.ee.p1}
 \frac{1}{2}\frac{d}{dt}\left\{\|u\|^2+\|\na_x\phi\|^2-\int_{\R^3}|b|^2(a+2c)\,dx\right\} +\kappa \left\|\nu^{1/2}\{\FI-\FP\}u\right\|^2\\
 \leq C\de \|\na_x (a,b,c)\|^2+ \frac{C\de}{(1+t)^{1+\vth}}\left\|\lag \xi \rag^{1/2}\{\FI-\FP\}u\right\|^2
 \leq C\de \CD_{N,\ell,q}(t).
\end{multline}
Next we consider the energy type estimates containing only $x$ derivatives. Applying $\pa_x^\al$ with $1\leq |\al|\leq N$ to \eqref{vpb1}, multiplying it by $\pa_x^\al u$ and then integrating it over $\R^3\times \R^3$ gives
\begin{multline*}
\frac{1}{2}\frac{d}{dt}\left\{\|\pa^\al u\|^2+\|\pa^\al\na_x\phi\|^2\right\}-\left\lag \FL \pa^\al u,\pa^\al u\right\rag \\
=\left\lag \pa^\al \Ga(u,u),\pa^\al u\right\rag+ \left\lag \pa^\al \left(\frac{1}{2}\xi\cdot \na_x\phi u\right),\pa^\al u\right\rag+ \left\lag -\pa^\al (\na_x\phi\cdot \na_\xi u),\pa^\al u\right\rag.
\end{multline*}
After using \eqref{coerc}, \eqref{lem.ga2.1}, \eqref{lem.xiu1.1}, \eqref{lem.pu1.1} and then taking summation over $1\leq |\al|\leq N$, one has
\begin{multline*}
\frac{1}{2}\frac{d}{dt}\sum_{1\leq |\al|\leq N}\left\{\left\|\pa^\al u\right\|^2+\left\|\pa^\al \na_x\phi\right\|^2\right\}+\kappa \sum_{1\leq |\al|\leq N}\left\|\nu^{1/2}\pa^\al \{\FI-\FP\}u\right\|^2\\
\leq  C\|(a,b,c)\|_{H^{N}}\left\|\na_x (a,b,c)\right\|_{H^{N-1}} \sum_{1\leq |\al|\leq N}\left\|\nu^{1/2}\pa^\al u\right\|
+C\left\{\CE_{N,\ell,q}^{\rm h}(t)\right\}^{1/2} \CD_{N,\ell,q}(t)\\
+C\left\|\na_x^2\phi\right\|_{H^{N-1}}\left\{\sum_{|\al|+|\be|\leq N,|\be|\leq 1}\left\|\lag \xi \rag w_{|\be|-\ell,q}(t,\xi)\pa^\al_\be\{\FI-\FP\} u\right\|^2\right.\\
+\left\|\na_x (a,b,c)\right\|_{H^{N-1}}^2\Bigg\},
\end{multline*}
which under the {\it a priori} assumption \eqref{apa} implies
\begin{multline}\label{lem.ee.p2}
\frac{1}{2}\frac{d}{dt}\sum_{1\leq |\al|\leq N}\left\{\left\|\pa^\al u\right\|^2+\left\|\pa^\al \na_x\phi\right\|^2\right\}+\kappa \sum_{1\leq |\al|\leq N}\left\|\nu^{1/2}\pa^\al \{\FI-\FP\}u\right\|^2\\
\leq C\de \CD_{N,\ell,q}(t).
\end{multline}

\medskip

\noindent{\bf Step 2.}  Energy estimates with the weight function $w_{|\be|-\ell,q}(t,\xi)$:

\medskip

\noindent{\it Step 2.1.} By applying $\{\FI-\FP\}$ to the equation \eqref{vpb1},  the time evolution of $\{\FI-\FP\}u$ satisfies
\begin{multline}\label{eq.mi}
\pa_t \{\FI-\FP\}u+\xi \cdot \na_x \{\FI-\FP\}u + \na_x\phi \cdot \na_\xi \{\FI-\FP\}u
+\nu(\xi)\{\FI-\FP\}u\\
=K\{\FI-\FP\}u +\Ga(u,u)+\frac{1}{2} \xi \cdot \na_x \phi \{\FI-\FP\}u +\comml \FP, \CT_{\phi}\commr u,
\end{multline}
where $\comml A, B\commr=AB-BA$ denotes the commutator of two operators $A,B$, and $\CT_\phi$ is given by
\begin{equation*}
    \CT_\phi=\xi \cdot \na_x  +\na_x\phi \cdot \na_\xi  -\frac{1}{2} \xi \cdot \na_x\phi.
\end{equation*}
By multiplying \eqref{eq.mi} by $w_{-\ell,q}^2(t,\xi)\{\FI-\FP\} u$ and taking integration over $\R^3\times \R^3$, one has
\begin{multline}\label{lem.ee.p3}
\frac{1}{2}\frac{d}{dt}\| w_{-\ell,q}(t,\xi) \{\FI-\FP\} u\|^2 +\left\lag \nu(\xi),w_{-\ell,q}^2(t,\xi)|\{\FI-\FP\} u|^2\right\rag\\
+\left\lag - \frac{1}{2}\frac{d}{dt}w_{-\ell,q}^2(t,\xi),  |\{\FI-\FP\} u|^2\right\rag\\
=\left\lag K\{\FI-\FP\} u,w_{-\ell,q}^2(t,\xi)\{\FI-\FP\} u\right\rag+\left\lag \Ga(u,u),w_{-\ell,q}^2(t,\xi)\{\FI-\FP\} u\right\rag\\
+\left\lag -\na_x\phi \cdot \na_\xi \{\FI-\FP\} u,w_{-\ell,q}^2(t,\xi)\{\FI-\FP\} u\right\rag\\
+\left\lag -\frac{1}{2}\xi\cdot \na_x\phi,w_{-\ell,q}^2(t,\xi)|\{\FI-\FP\} u|^2\right\rag
+\left\lag \comml  \FP, \CT_\phi\commr u,w_{-\ell,q}^2(t,\xi)\{\FI-\FP\} u\right\rag.
\end{multline}
For the left-hand third term of \eqref{lem.ee.p3}, notice
\begin{equation*}
- \frac{1}{2}\frac{d}{dt}w_{-\ell,q}^2(t,\xi)=\frac{\la\vth}{(1+t)^{1+\vth}} \lag \xi\rag^2 w_{-\ell,q}^2(t,\xi).
\end{equation*}
From \eqref{lem.nuk.2}, H\"{o}lder's inequality in $x$ integration and Cauchy-Schwarz's inequality, the right-hand first term of \eqref{lem.ee.p3} is bounded by
\begin{multline*}
\eta\left\|\nu^{1/2}w_{-\ell,q}(t,\xi) \{\FI-\FP\} u\right\|^2\\
+C_\eta \left\|\chi_{|\xi|\leq 2C_\eta}\lag \xi\rag^{-\ga \ell}\{\FI-\FP\} u \right\| \cdot \left\|\nu^{1/2}w_{-\ell,q}(t,\xi) \{\FI-\FP\} u\right\|\\
\leq 2\eta\left\|\nu^{1/2}w_{-\ell,q}(t,\xi) \{\FI-\FP\} u\right\|^2+C_\eta\left\|\nu^{1/2}\{\FI-\FP\} u\right\|^2,
\end{multline*}
for an arbitrary constant $\eta>0$.
The right-hand second, third and fourth terms of \eqref{lem.ee.p3} are bounded by $C\de \CD_{N,\ell,q}(t)$, where we have used \eqref{lem.ga1.2} for the second term, velocity integration by part for the third term and also the {\it a priori} assumption \eqref{apa}. For the right-hand last term of \eqref{lem.ee.p3}, it is straightforward to estimate it by
\begin{multline*}
\left\lag \comml  \FP, \CT_\phi\commr u,w_{-\ell,q}^2(t,\xi)\{\FI-\FP\} u\right\rag\\
\leq \eta \left\|\nu^{1/2}\{\FI-\FP\} u\right\|^2+C_\eta\left\{\left\|\nu^{1/2}\na_x\{\FI-\FP\}u\right\|^2+\left\|\na_x (a,b,c)\right\|^2\right\}\\
+C_\eta\left\|\na_x\phi\right\|_{H^2}^2\left\{\left\|\nu^{1/2}\{\FI-\FP\} u\right\|^2+\left\|\na_x(a,b,c)\right\|^2\right\},
\end{multline*}
for $\eta>0$.
Plugging the above estimates into \eqref{lem.ee.p3} and fixing a properly small constant $\eta>0$ yield
\begin{multline}\label{lem.ee.p4}
\frac{1}{2}\frac{d}{dt}\left\| w_{-\ell,q}(t,\xi) \{\FI-\FP\} u\right\|^2
+\kappa \left\|\nu^{1/2}w_{-\ell,q}(t,\xi) \{\FI-\FP\} u\right\|^2\\
+\frac{\kappa}{(1+t)^{1+\vth}}\left\|\lag \xi \rag w_{-\ell,q}(t,\xi) \{\FI-\FP\} u\right\|^2
\leq C\de \CD_{N,\ell,q}(t)\\
+C\left\{\left\|\nu^{1/2}\{\FI-\FP\} u\right\|^2+\left\|\nu^{1/2}\na_x\{\FI-\FP\}u\right\|^2+\left\|\na_x (a,b,c)\right\|^2\right\}.
\end{multline}

\medskip

\noindent{\it Step 2.2.}
For the weighted estimate on the terms containing only $x$ derivatives, we directly use \eqref{vpb1}. In fact, take $1\leq |\al|\leq N$, and by applying $\pa_x^\al$ to \eqref{vpb1} with
\begin{equation*}
    \FL u=\FL\{\FI-\FP\}u=-\nu\{\FI-\FP\}u+K\{\FI-\FP\}u,
\end{equation*}
multiplying it by $w_{-\ell,q}^2(t,\xi)\pa_x^\al u$ and integrating over $\R^3\times \R^3$, one has
\begin{multline}\label{lem.ee.p5}
\frac{1}{2}\frac{d}{dt}\left\|w_{-\ell,q}(t,\xi)\pa^\al u\right\|^2+\left\lag \nu(\xi)\pa^\al\{\FI-\FP\}u,w_{-\ell,q}^2(t,\xi)\pa^\al u\right\rag\\
+\left\lag -\frac{1}{2}\frac{d}{dt}w_{-\ell,q}^2(t,\xi),|\pa^\al u|^2\right\rag\\
=\left\lag K\pa^\al\{\FI-\FP\}u,w_{-\ell,q}^2(t,\xi)\pa^\al u\right\rag+\left\lag \pa^\al \na_x\phi \cdot \xi \FM^{1/2}, w_{-\ell,q}^2(t,\xi) \pa^\al u \right\rag\\
+\left\lag \pa^\al \Ga(u,u) -\pa^\al (\na_x\phi\cdot \na_\xi u)+\pa^\al \left(\frac{1}{2}\xi\cdot \na_x\phi u\right),w_{-\ell,q}^2(t,\xi)\pa^\al u\right\rag.
\end{multline}
For the left-hand terms of \eqref{lem.ee.p5}, one has
\begin{multline*}
\left\lag \nu(\xi)\pa^\al\{\FI-\FP\}u,w_{-\ell,q}^2(t,\xi)\pa^\al u\right\rag\\
=\left\|\nu^{1/2}w_{-\ell,q}(t,\xi)\pa^\al\{\FI-\FP\}u\right\|^2
+\left\lag \nu(\xi)\pa^\al\{\FI-\FP\}u,w_{-\ell,q}^2(t,\xi)\pa^\al\FP u\right\rag\\
\geq \frac{1}{2}\left\|\nu^{1/2}w_{-\ell,q}(t,\xi)\pa^\al\{\FI-\FP\}u\right\|^2-C \left\|\pa^\al (a,b,c)\right\|^2
\end{multline*}
and
\begin{multline*}
\left\lag -\frac{1}{2}\frac{d}{dt}w_{-\ell,q}^2(t,\xi),|\pa^\al u|^2\right\rag
=\frac{\la\vth}{(1+t)^{1+\vth}}\left\|\lag \xi \rag w_{-\ell,q}(t,\xi)\pa^\al u\right\|^2\\
\geq \frac{\la\vth}{2(1+t)^{1+\vth}}\left\|\lag \xi \rag w_{-\ell,q}(t,\xi)\pa^\al \{\FI-\FP\}u\right\|^2
-C \left\|\pa^\al (a,b,c)\right\|^2.
\end{multline*}
For the right-hand first term of \eqref{lem.ee.p5}, one has
\begin{multline*}
\left\lag K\pa^\al\{\FI-\FP\}u,w_{-\ell,q}^2(t,\xi)\pa^\al u\right\rag\\
\leq \left\{\eta\left\|\nu^{1/2}w_{-\ell,q}(t,\xi) \pa^\al \{\FI-\FP\} u\right\|
+C_\eta \left\|\chi_{|\xi|\leq 2C_\eta}\lag \xi\rag^{\ga (-\ell)}\pa^\al\{\FI-\FP\} u \right\|\right\} \\
\times \left\|\nu^{1/2}w_{-\ell,q}(t,\xi)\pa^\al u\right\|\\
\leq C\eta\left\|\nu^{1/2}w_{-\ell,q}(t,\xi) \pa^\al \{\FI-\FP\} u\right\|^2\\
+C_\eta\left\{\left\|\nu^{1/2}\pa^\al \{\FI-\FP\} u\right\|^2 +\left\|\pa^\al (a,b,c)\right\|^2\right\},
\end{multline*}
where $\eta>0$. For the right-hand second term of \eqref{lem.ee.p5}, we have that for $\eta>0$,
\begin{multline*}
\left\lag \pa^\al \na_x\phi \cdot \xi \FM^{1/2}, w_{-\ell,q}^2(t,\xi) \pa^\al u \right\rag
\leq \eta\left\|\nu^{1/2}\pa^\al u\right\|^2+ C_\eta\left\|\pa^\al \na_x\phi\right\|^2\\
\leq C\eta\left\|\nu^{1/2}\pa^\al \{\FI-\FP\}u\right\|^2+ C_\eta \left\{\|a\|^2+\left\|\na_x (a,b,c)\right\|_{H^{N-1}}^2\right\}.
\end{multline*}
For the right-hand third term of \eqref{lem.ee.p5}, from Lemma \ref{lem.ga2}, Lemma \ref{lem.xiu1} and Lemma \ref{lem.pu1}, it is bounded by
\begin{multline}\label{def.error}
  C\de \CD_{N,\ell,q}(t)
  +\frac{C\de}{(1+t)^{1+\vth}} \left\{\sum_{|\al|+|\be|\leq N}\left\|\lag \xi \rag w_{|\be|-\ell,q}(t,\xi)\{\FI-\FP\}u\right\|^2\right.\\
+\left\|\na_x (a,b,c)\right\|_{H^{N-1}}^2\Bigg\}
  \leq  C\de \CD_{N,\ell,q}(t).
\end{multline}
Putting the above estimates into \eqref{lem.ee.p5}, taking summation over $1\leq |\al|\leq N$ and fixing a small constant $\eta>0$ give
\begin{multline}\label{lem.ee.p6}
\frac{1}{2}\frac{d}{dt}\sum_{1\leq |\al|\leq N}\left\|w_{-\ell,q}(t,\xi)\pa^\al u\right\|^2\\
+\kappa\sum_{1\leq |\al|\leq N}\left\|\nu^{1/2}w_{-\ell,q}(t,\xi)\pa^\al\{\FI-\FP\}u\right\|^2\\
+\frac{\kappa}{(1+t)^{1+\vth}}\sum_{1\leq |\al|\leq N}\left\|\lag \xi \rag^{1/2} w_{-\ell,q}(t,\xi)\pa^\al u\right\|^2\\
\leq  C\de \CD_{N,\ell,q}(t)
+C\left\{\sum_{1\leq |\al|\leq N}\left\|\nu^{1/2}\pa^\al \{\FI-\FP\} u\right\|^2 +\left\|\na_x (a,b,c)\right\|_{H^{N-1}}^2+\|a\|^2\right\}.
\end{multline}

\medskip

\noindent{\it Step 2.3.} For the weighted estimate on the mixed $x$-$\xi$ derivatives, we  use the equation \eqref{eq.mi} of $\{\FI-\FP\} u$.
Let $1\leq m\leq N$. By applying $\pa_\be^\al$ with $|\be|=m$ and $|\al|+|\be|\leq N$ to \eqref{eq.mi}, multiplying it by $w_{|\be|-\ell,q}^2(t,\xi)\pa_\be^\al \{\FI-\FP\} u$ and  integrating over $\R^3\times \R^3$, one has
\begin{multline}\label{lem.ee.p7}
\frac{1}{2}\frac{d}{dt}\left\|w_{|\be|-\ell,q}(t,\xi)\pa_\be^\al \{\FI-\FP\}u\right\|^2\\
+\left\lag \pa_{\be}\{\nu(\xi)\pa^\al \{\FI-\FP\}u\},w_{|\be|-\ell,q}^2(t,\xi)\pa_\be^\al \{\FI-\FP\}u\right\rag\\
+\left\lag -\frac{1}{2}\frac{d}{dt}w_{|\be|-\ell,q}^2(t,\xi), |\pa_\be^\al \{\FI-\FP\}u|^2\right\rag\\
=\left\lag \pa_\be K \pa^\al\{\FI-\FP\}u, w_{|\be|-\ell,q}^2(t,\xi)\pa_\be\pa^\al \{\FI-\FP\} u\right\rag\\
+\Bigg\lag\pa_\be^\al \left(\Ga(u,u)-\na_x\phi\cdot \na_\xi \{\FI-\FP\}u+\frac{1}{2}\xi\cdot \na_x\phi \{\FI-\FP\}u\right), \\
w_{|\be|-\ell,q}^2(t,\xi)\pa_\be^\al \{\FI-\FP\}u \Bigg\rag\\
+\left\lag -\coml \pa_\be^\al, \xi\cdot \na_x\comr \{\FI-\FP\}u, w_{|\be|-\ell,q}^2(t,\xi)\pa_\be^\al \{\FI-\FP\} u\right\rag\\
+ \left\lag \pa_\be^\al \coml\FP, \CT_\phi\comr u, w_{|\be|-\ell,q}^2(t,\xi)\pa_\be^\al \{\FI-\FP\} u\right\rag.
\end{multline}
For the left-hand second term of \eqref{lem.ee.p7}, one has
\begin{multline*}
\left\lag \pa_{\be}\{\nu(\xi)\pa^\al \{\FI-\FP\}u\},w_{|\be|-\ell,q}^2(t,\xi)\pa_\be\pa^\al \{\FI-\FP\}u\right\rag\\
\geq \left\|\nu^{1/2}w_{|\be|-\ell,q}(t,\xi)\pa_\be^\al \{\FI-\FP\}u\right\|^2\\
-\eta\sum_{|\be_1|\leq |\be|} \left\|\nu^{1/2}w_{|\be|-\ell,q}(t,\xi)\pa_{\be_1}^\al \{\FI-\FP\}u\right\|^2\\
-C_\eta \left\|\chi_{|\xi|\leq 2C_\eta} \lag \xi\rag^{\ga (|\be|-\ell)} \pa^\al \{\FI-\FP\}u\right\|^2,
\end{multline*}
where $\eta>0$ is arbitrary, and it is noticed that
\begin{equation*}
    \left\|\chi_{|\xi|\leq 2C_\eta} \lag \xi\rag^{\ga (|\be|-\ell)} \pa^\al \{\FI-\FP\}u\right\|\leq C \left\|\nu^{1/2} \pa^\al \{\FI-\FP\}u\right\|.
\end{equation*}
For the right-hand first term of \eqref{lem.ee.p7}, we have that for $\eta>0$,
\begin{multline*}
\left\lag \pa_\be K \pa^\al\{\FI-\FP\}u, w_{|\be|-\ell,q}^2(t,\xi)\pa_\be^\al \{\FI-\FP\} u\right\rag\\
\leq \left\{\eta \sum_{|\be_1|\leq |\be|} \left\|\nu^{1/2}w_{|\be|-\ell,q}(t,\xi)\pa_{\be_1}^\al\{\FI-\FP\}u\right\|\right.\\
+C\eta\left\|\chi_{|\xi|\leq 2C_\eta}\lag \xi \rag^{\ga (|\be|-\ell)} \pa^\al\{\FI-\FP\}u\Bigg\|\right\}
\cdot
\left\|\nu^{1/2}w_{|\be|-\ell,q}(t,\xi)\pa_{\be}^\al\{\FI-\FP\}u\right\|\\
\leq C\eta \sum_{|\be_1|\leq |\be|} \left\|\nu^{1/2}w_{|\be|-\ell,q}(t,\xi)\pa_{\be_1}^\al\{\FI-\FP\}u\right\|^2\\
+C_\eta\left\|\nu^{1/2}\pa^\al\{\FI-\FP\}u\right\|^2.
\end{multline*}
As in \eqref{def.error}, from Lemma \ref{lem.ga2}, Lemma \ref{lem.xiu2} and Lemma \ref{lem.pu2},  the right-hand second term of \eqref{lem.ee.p7} is bounded by  $C\de \CD_{N,\ell,q}(t)$.

For the right-hand third term of \eqref{lem.ee.p7},
\begin{multline*}
\left\lag -\coml \pa_\be^\al, \xi\cdot \na_x\comr \{\FI-\FP\}u, w_{|\be|-\ell,q}^2(t,\xi)\pa_\be^\al \{\FI-\FP\} u\right\rag\\
=\sum_{\be_1+\be_2=\be,|\be_1|=1}C_{\be_1\be_2}^{\be}\left\lag -\pa_{\be_1}\xi\cdot \na_x\pa_{\be_2}^\al\{\FI-\FP\}u, w_{|\be|-\ell,q}^2(t,\xi)\pa_\be^\al \{\FI-\FP\} u\right\rag\\
\leq  \eta \left\|\nu^{1/2} w_{|\be|-\ell,q}(t,\xi)\pa_\be^\al \{\FI-\FP\} u\right\|^2\\
+ C_\eta \sum_{|\al|+|\be|\leq N,|\be_1|=|\be|-1} \left\|\nu^{1/2} w_{|\be_2|-\ell,q}(t,\xi)\pa_{\be_2}^\al \{\FI-\FP\} u\right\|^2,
\end{multline*}
where $\eta>0$ is arbitrary, and we have used
\begin{multline*}
w_{|\be|-\ell, q}^2(t,\xi)=w_{|\be|+\frac{1}{2}-\ell, q}(t,\xi)w_{|\be_2|+\frac{1}{2}-\ell, q}(t,\xi)\\
\leq C \nu^{1/2}(\xi) w_{|\be|-\ell, q}(t,\xi)\cdot \nu^{1/2}(\xi)w_{|\be_2|-\ell, q}(t,\xi).
\end{multline*}
The right-hand last term of \eqref{lem.ee.p7} is estimated as

\begin{multline*}
 \left\lag \pa_\be^\al \coml\FP, \CT_\phi\comr u, w_{|\be|-\ell,q}^2(t,\xi)\pa_\be^\al \{\FI-\FP\} u\right\rag
    \leq \eta \left\|\nu^{1/2}\pa^\al \{\FI-\FP\} u\right\|^2\\
    +C_\eta \left(1+\|\na_x\phi\|_{H^N}^2\right)\left\{\sum_{|\al|\leq N}\left\|\nu^{1/2}\pa^\al \{\FI-\FP\} u\right\|^2+\left\|\na_x (a,b,c)\right\|_{H^{N-1}}^2\right\},
\end{multline*}
for $\eta>0$. Therefore, by plugging all the estimates above into \eqref{lem.ee.p7},
taking summation over $\{|\be|=m,|\al|+|\be|\leq N\}$ for each given $1\leq m\leq N$ and then taking the proper linear combination of those $N-1$ estimates with properly chosen constants  $C_m>0$ $(1\leq m\leq N)$ and $\eta>0$ small enough, one has
\begin{multline}\label{lem.ee.p8}
 \frac{1}{2}\frac{d}{dt}\sum_{m=1}^NC_m\sum_{
|\al|+|\be|\leq N, |\be|= m}\left\|w_{|\be|-\ell,q}(t,\xi)\pa_\be^\al \{\FI-\FP\} u\right\|^2 \\
+\ka \sum_{
|\al|+|\be|\leq N,|\be|\geq 1}\left\|\nu^{1/2}w_{|\be|-\ell,q}(t,\xi)|\pa_\be^\al \{\FI-\FP\} u\right\|^2\\
+ \frac{\ka}{(1+t)^{1+\vth}}\sum_{|\al|+|\be|\leq N,|\be|\geq 1}\left\|\lag \xi\rag w_{|\be|-\ell,q}(t,\xi)\pa_\be^\al \{\FI-\FP\} u\right\|^2\\
\leq C\de \CD_{N,\ell,q}(t)+C\left\{\sum_{|\al|\leq N}\left\|\nu^{1/2}w_{-\ell,q}(t,\xi)\pa^\al \{\FI-\FP\} u\right\|^2+\left\|\na_x (a,b,c)\right\|_{H^{N-1}}^2\right\}.
\end{multline}

\medskip

\noindent{\bf Step 3.} {\it The proof of \eqref{lem.ee.1}}: We take a proper linear combination of those estimates obtained in the previous two steps as follows. First of all, since $\de>0$ is sufficiently small, the linear combination $M_1\times[\eqref{lem.ee.p1}+\eqref{lem.ee.p2}]+\eqref{lem.ma.2}$ for $M_1>0$ large enough gives
\begin{multline}\label{lem.ee.p9}
\frac{d}{dt}\left\{\frac{M_1}{2}\left[\sum_{|\al|\leq N}\left(\left\|\pa^\al u\right\|^2+\left\|\pa^\al \na_x\phi\right\|^2\right)-\int_{\R^3} |b|^2(a+2c)\,dx\right]+\CE_N^{\rm int}(t)\right\}\\
+\kappa \left\{\sum_{|\al|\leq N}\left\|\nu^{1/2}\pa^\al \{\FI-\FP\} u\right\|^2+\|a\|^2+\left\|\na_x (a,b,c)\right\|_{H^{N-1}}^2\right\}
\leq C\de \CD_{N,\ell,q}(t),
\end{multline}
where the corresponding energy functional is equivalent to $\|u\|_{L^2_\xi(H^N_x)}^2+\|\na_x\phi\|_{H^N}^2$. Now, the further linear combination $M_3\times [M_2\times \eqref{lem.ee.p9}+\eqref{lem.ee.p4}+\eqref{lem.ee.p6}]+\eqref{lem.ee.p8}$
for $M_2>0$ and $M_3>0$ large enough in turn gives
\begin{equation*}
  \frac{d}{dt} \CE_{N,\ell,q}(t)+\kappa \CD_{N,\ell,q}(t)\leq C\de  \CD_{N,\ell,q}(t),
\end{equation*}
where $\CE_{N,\ell,q}(t)$ is given by
\begin{multline}\label{lem.ee.p10}
 \CE_{N,\ell,q}(t)\\
 =M_3\left[M_2\left\{\frac{M_1}{2}\left[\sum\limits_{|\al|\leq N}\left(\left\|\pa^\al u\right\|^2+\left\|\pa^\al \na_x\phi\right\|^2\right)-{\displaystyle\int_{\R^3}} |b|^2(a+2c)\,dx\right]+\CE_N^{\rm int}(t)\right\}\right.\\
\left.+\left\| w_{-\ell,q}(t,\xi) \{\FI-\FP\} u\right\|^2+\sum\limits_{1\leq |\al|\leq N}\left\|w_{-\ell,q}(t,\xi)\pa^\al u\right\|^2\right]\\
 \quad+\sum\limits_{m=1}^NC_m\sum\limits_{
|\al|+|\be|\leq N, |\be|= m}\left\|w_{|\be|-\ell,q}(t,\xi)\pa_\be^\al \{\FI-\FP\} u\right\|^2.
\end{multline}
Notice \eqref{lem.ma.1}. It is easy to see that
$$
 \CE_{N,\ell,q}(t)\sim \trn u \trn_{N,\ell,q}^2 (t).
$$
Therefore, \eqref{lem.ee.1} follows from the above inequality since $\de>0$ is  small enough. This completes the proof of Lemma \ref{lem.ee}.
\end{proof}

\section{Global existence}

Recall \eqref{def.x} for $ X_{N,\ell,q}(t)$. To close the energy estimates under the {\it a priori} assumption \eqref{apa}, one has to obtain the time-decay of $\CE_{N,\ell-1,q}(t)$ and $\|\na_x^2\phi\|_{H^{N-1}}^2$.  The key is to prove the following

\begin{lemma}\label{lem.X}
Assume $-2\leq \ga<0$ and $\int_{\R^3}a_0(x)\,dx=0$. Fix parameters $N$, $\ell_0$, $\ell$ and $q(t)=q+\frac{\la}{(1+t)^\vth}$ as stated in Theorem \ref{thm.m}. Suppose that the {\it a priori} assumption \eqref{apa} holds true for $\de>0$ small enough. Then, one has
\begin{equation}\label{lem.X.2}
     X_{N,\ell,q}(t)\leq C\left\{\eps_{N,\ell,q}^2 +  X_{N,\ell,q}(t)^2\right\},
\end{equation}
for any $t\geq 0$, where $\eps_{N,\ell,q}$ is defined by
\begin{equation}\label{lem.X.3}
   \eps_{N,\ell,q}=\sum_{|\al|+|\be|\leq N}\left\|w_{|\be|-\ell,q}(0,\xi) \pa_\be^\al u_0\right\|+\left\|(1+|x|+\lag \xi\rag^{-\frac{\ga\ell_0}{2}} )u_0\right\|_{Z_1}.
\end{equation}

\end{lemma}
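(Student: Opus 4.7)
The plan is to bound separately each of the three suprema composing $X_{N,\ell,q}(t)$. For the first piece, $\sup_{s}\CE_{N,\ell,q}(s)$, I apply Lemma \ref{lem.ee} directly: the Lyapunov inequality $\frac{d}{dt}\CE_{N,\ell,q}+\kappa\CD_{N,\ell,q}\leq 0$ gives $\CE_{N,\ell,q}(t)\leq\CE_{N,\ell,q}(0)\leq C\eps_{N,\ell,q}^{2}$.

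For the second piece, $\sup_{s}(1+s)^{3/2}\CE_{N,\ell-1,q}(s)$, I first establish the key interpolation inequality
$$
\CD_{N,\tilde{\ell},q}(t)+\|(b,c)(t)\|^{2}+\|\na_{x}\phi(t)\|^{2}\geq \kappa\,\CE_{N,\tilde{\ell}-\frac{1}{2},q}(t),
$$
valid for any $\tilde{\ell}$; this follows from the pointwise identity $\nu(\xi)\,w_{|\be|-\tilde{\ell},q}^{2}(t,\xi)\sim w_{|\be|-(\tilde{\ell}-\frac{1}{2}),q}^{2}(t,\xi)$ together with the macroscopic terms $\|a\|^{2}+\sum\|\na_{x}\pa^{\al}(a,b,c)\|^{2}$ already present in $\CD$. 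Setting $\tilde{\ell}=\ell-1$ converts Lemma \ref{lem.ee} into
$$
\frac{d}{dt}\CE_{N,\ell-1,q}+\kappa\,\CE_{N,\ell-\frac{3}{2},q}\leq C\bigl(\|(b,c)\|^{2}+\|\na_{x}\phi\|^{2}\bigr).
$$
I then combine this with the log-convexity interpolation $\CE_{N,\ell-1,q}\lesssim \CE_{N,\ell,q}^{1/3}\CE_{N,\ell-\frac{3}{2},q}^{2/3}$ (coming from $\ell-1=\tfrac{1}{3}\ell+\tfrac{2}{3}(\ell-\tfrac{3}{2})$) and the step-one bound to obtain $\CE_{N,\ell-\frac{3}{2},q}\gtrsim \CE_{N,\ell-1,q}^{3/2}/\eps_{N,\ell,q}$. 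A time-weighted estimate with weight $(1+t)^{3/2}$, using the low-order decay rate $(1+t)^{-3/2}$ established in the next step as forcing, then yields $(1+s)^{3/2}\CE_{N,\ell-1,q}(s)\lesssim \eps_{N,\ell,q}^{2}+X_{N,\ell,q}(t)^{2}$.

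For the third piece, $\sup_{s}(1+s)^{5/2}\|\na_{x}^{2}\phi(s)\|_{H^{N-1}}^{2}$, I turn to the Duhamel representation
$$
u(t)=e^{tB}u_{0}+\int_{0}^{t}e^{(t-s)B}G(u(s))\,ds,\quad G(u)=\Ga(u,u)+\tfrac{1}{2}\xi\cdot\na_{x}\phi\,u-\na_{x}\phi\cdot\na_{\xi}u,
$$
and apply Theorem \ref{thm.lide} with $m=1+|\al|$ to each $\pa^{\al}\na_{x}^{2}\phi$ with $|\al|\leq N-1$. The assumption $\int a_{0}\,dx=0$ and the identity $\int\FM^{1/2}G\,d\xi=0$ (which follows from $\int\FM^{1/2}\Ga(u,u)\,d\xi=0$ plus an integration-by-parts cancellation between the two electric-force nonlinearities) are preserved in time by mass conservation, so that \eqref{thm.lide.3} applies at its sharp rate $\sigma_{1+|\al|}$. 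The weighted norms of $G$ are estimated by splitting $L^{1}_{x}=L^{2}_{x}\cdot L^{2}_{x}$ via Cauchy--Schwarz in $x$, so that each factor contributes a $(1+s)^{-3/4}$ decay drawn from $\CE_{N,\ell-1,q}^{1/2}(s)$, producing $\|G(u(s))\|_{*,|\al|}\lesssim X_{N,\ell,q}(t)^{2}(1+s)^{-3/2}$. Splitting the Duhamel integral as $\int_{0}^{t/2}+\int_{t/2}^{t}$ then delivers the optimal rate $(1+t)^{-5/4-|\al|/2}$ for $\|\pa^{\al}\na_{x}^{2}\phi\|$, providing the third piece; the low-order decays $\|(b,c)(t)\|^{2}+\|\na_{x}\phi(t)\|^{2}\lesssim (1+t)^{-3/2}$ needed in step two come from the very same Duhamel argument applied with $m=0$.

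The main obstacle is the circular coupling of steps two and three: the decay of $\CE_{N,\ell-1,q}$ rests on low-order Duhamel decays whose nonlinear piece requires the $\CE_{N,\ell-1,q}$ decay in turn. The bootstrap closes because every coupling contribution is quadratic, collapsing into the single $X_{N,\ell,q}(t)^{2}$ term on the right of \eqref{lem.X.2}. A secondary but delicate technical issue is matching the algebraic weight $\mu^{\ell+\ell_{0}}=\lag\xi\rag^{-\ga(\ell+\ell_{0})/2}$ demanded by Theorem \ref{thm.lide} against $\lag\xi\rag^{\ga(|\be|-\ell)}$ appearing inside $w_{|\be|-\ell,q}$, while simultaneously accommodating the extra positive-power velocity weight $\lag\xi\rag^{-\ga\ell_{0}/2}$ on $u_{0}$ in the $Z_{1}$-norm appearing in \eqref{thm.lide.2}; this matching is precisely what forces the quantitative hypothesis $\ell\geq 1+\max\{N,\ell_{0}/2-1/\ga\}$ in Theorem \ref{thm.m}.
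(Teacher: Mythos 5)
Your overall architecture (bound the three pieces of $X_{N,\ell,q}$ separately; get piece one from the Lyapunov inequality; get the forcing decay from Duhamel plus Theorem \ref{thm.lide}; close quadratically in $X$) matches the paper, and you correctly identify the key inequality $\CD_{N,\tilde{\ell},q}+\|(b,c)\|^{2}+\|\na_x\phi\|^{2}\gtrsim\CE_{N,\tilde{\ell}-\frac{1}{2},q}$. However, your Step 2 has a genuine gap. The log-convexity route gives the differential inequality $\frac{d}{dt}\CE_{N,\ell-1,q}+\kappa\,\CE_{N,\ell-1,q}^{3/2}/\eps_{N,\ell,q}\leq C\{\|(b,c)\|^{2}+\|\na_x\phi\|^{2}\}$, but this is too weak: with forcing of size $(1+t)^{-3/2}$ the quasi-stationary balance $\kappa y^{3/2}/\eps\sim A(1+t)^{-3/2}$ gives only $y\sim(\eps A)^{2/3}(1+t)^{-1}$, not $(1+t)^{-3/2}$. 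Moreover, multiplying directly by $(1+t)^{3/2}$ produces the forcing integral $\int_0^t(1+s)^{3/2}\|\na_x\phi(s)\|^{2}\,ds$, which grows linearly in $t$ since $\|\na_x\phi\|^{2}$ decays no faster than $(1+s)^{-3/2}$ (this rate is optimal). The paper avoids both problems by \emph{not} interpolating back to $\CE_{N,\ell-1,q}$; instead it sets up three Lyapunov inequalities at weight levels $\ell$, $\ell-\frac{1}{2}$, $\ell-1$ with time weights $1$, $(1+t)^{1/2+\eps}$, $(1+t)^{3/2+\eps}$ respectively, and takes a linear combination in which each right-hand $\CE$ is absorbed by the dissipation $\CD$ of the next level via the $\tilde\ell\mapsto\tilde\ell-\frac{1}{2}$ inequality; the macroscopic forcing then only ever appears with weights $(1+s)^{1/2+\eps}$ and $(1+s)^{0}$, whose integrals against $(1+s)^{-3/2}$ are $O((1+t)^{\eps})$, and one divides by $(1+t)^{\eps}$ at the end.

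A secondary error is in your Step 3: applying Theorem \ref{thm.lide} with $m=1+|\al|$ for all $|\al|\leq N-1$ requires $\ell_0>2\si_{1+|\al|}=\frac{3}{2}+1+|\al|$, which is not available (only $\ell_0>\frac{5}{2}=2\si_1$ is assumed), and in any case the rate $(1+t)^{-5/4-|\al|/2}$ cannot come out of Duhamel because $\pa^\al G$ decays no faster than $G$. The paper applies the theorem only with $m=1$, placing all higher $x$-derivatives inside the $L^2_\xi(H^{N-2}_x)$ norms of $u_0$ and $G$, to get the uniform rate $(1+t)^{-5/4}$ for $\|\na_x u\|_{L^2_\xi(H^{N-2}_x)}+\|\na_x^{2}\phi\|$, which combined with the elliptic bound $\|\na_x^{2}\phi\|_{H^{N-1}}\lesssim\|\na_x^{2}\phi\|+\|\na_x u\|_{L^2_\xi(H^{N-2}_x)}$ suffices. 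Your observations that $\FP_0 G\equiv 0$ (via the exact cancellation of the two electric-force terms) and that the apparent circularity is resolved because $\CE_{N,\ell-1,q}(s)\leq(1+s)^{-3/2}X_{N,\ell,q}(t)$ by definition of $X$ are both correct and are exactly how the paper proceeds.
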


As preparation, we need an additional lemma concerning the time-decay estimates on the macroscopic quantity $(a,b,c)$ in terms of initial data and $X_{N,\ell}(t)$. Here and afterwards, for notational simplicity, we write $X_{N,\ell}(t)=X_{N,\ell,q}(t)$ when $q=0$, and in a similar way, $\eps_{N,\ell}=\eps_{N,\ell,0}$ and $\CE_{N,\ell}(t)=\CE_{N,\ell,0}(t)$ will be used.

\begin{lemma}\label{lem.made}
Under the assumptions of Lemma \ref{lem.X}, one has
\begin{equation}\label{lem.made.1}
  \|u(t)\|+\|\na_x\phi(t)\|\leq C(1+t)^{-\frac{3}{4}}\left \{\eps_{N,\ell}+X_{N,\ell}(t)\right\},
\end{equation}
and
\begin{equation}\label{lem.made.2}
  \|\na_xu(t)\|_{L^2_\xi(H^{N-2}_x)}+\left\|\na_x^2\phi(t)\right\|\leq C(1+t)^{-\frac{5}{4}} \{\eps_{N,\ell}+X_{N,\ell}(t)\},
\end{equation}
for any $0\leq t <T$.
\end{lemma}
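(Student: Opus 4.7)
The plan is to use Duhamel's principle against the linearized semigroup $e^{tB}$ built in Section \ref{sec3}, and then invoke Theorem \ref{thm.lide} on both the free and forced terms. Writing \eqref{vpb1} as $\pa_t u = Bu + G$ with
\begin{equation*}
G = \Ga(u,u) + \frac{1}{2}\xi\cdot\na_x\phi\,u - \na_x\phi\cdot\na_\xi u,
\end{equation*}
I would start from $u(t) = e^{tB}u_0 + \int_0^t e^{(t-s)B}G(s)\,ds$ (commuting $\pa^\al$ with $B$ for the higher-derivative bound \eqref{lem.made.2}), and apply \eqref{thm.lide.3} with $m=0$ for \eqref{lem.made.1} and with $m=|\al|\geq 1$ for \eqref{lem.made.2}; for all $1\leq|\al|\leq N-1$ I would read off the weakest applicable rate $\sigma_1=5/4$, since this matches the target and avoids tracking the faster decay available at higher $|\al|$.

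Before applying the linear estimate to the Duhamel integrand I would verify the neutrality hypothesis \eqref{thm.lide.1} at each source slice. Integration by parts in $\xi$ using $\na_\xi\FM^{1/2} = -\frac{\xi}{2}\FM^{1/2}$, together with the mass conservation of $Q$, yields
\begin{equation*}
\int_{\R^3}\FM^{1/2}G(s,x,\xi)\,d\xi = \frac{1}{2}\na_x\phi\cdot b - \frac{1}{2}\na_x\phi\cdot b = 0,
\end{equation*}
so the condition $\int a_0\,dx = 0$ propagates along the Duhamel integral and the Poisson kernel causes no singular contribution. Theorem \ref{thm.lide} then delivers
\begin{equation*}
\|\pa^\al u(t)\| + \|\pa^\al\na_x\phi(t)\| \lesssim (1+t)^{-\sigma_m}\eps_{N,\ell} + \int_0^t(1+t-s)^{-\sigma_m}\mathcal{N}_m(s)\,ds,
\end{equation*}
where $\mathcal{N}_m(s) := \|\mu^{\ell_0}G(s)\|_{Z_1} + \|\mu^{\ell_0}\pa^\al G(s)\|$.

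The core task is then to bound $\mathcal{N}_m(s)$ by $X_{N,\ell}(t)\,(1+s)^{-\beta}$ with $\beta > \sigma_m$, so that the standard convolution inequality $\int_0^t(1+t-s)^{-a}(1+s)^{-\beta}\,ds\lesssim (1+t)^{-a}$ delivers the desired rate. For $\Ga(u,u)$ I would use Cauchy--Schwarz in $x$ combined with the weighted bilinear estimates from Section \ref{sec4} to split the contribution into two factors of weighted $L^2_\xi H^k_x$-norms of $u$; each factor is dominated by $X_{N,\ell}(t)^{1/2}(1+s)^{-3/4}$ via $\CE_{N,\ell-1}(s) \lesssim X_{N,\ell}(t)(1+s)^{-3/2}$, producing $(1+s)^{-3/2}$ decay. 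For the Poisson terms $\xi\cdot\na_x\phi\,u$ and $\na_x\phi\cdot\na_\xi u$ I would pull $\na_x\phi$ out in $L^\infty_x$ via Sobolev embedding and the neutrality of $a$, bounded by $X_{N,\ell}(t)^{1/2}(1+s)^{-5/4}$, leaving a weighted norm of $u$ that decays at least like $(1+s)^{-3/4}$; the total is thus $(1+s)^{-2}$, comfortably integrable against either $\sigma_0$ or $\sigma_1$.

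The main obstacle will be the weight bookkeeping. The linear decay demands that the source carry the weight $\mu^{\ell_0} = \lag\xi\rag^{-\ga\ell_0/2}$, while the factors controlled by $X_{N,\ell}(t)$ live in the algebraic weight $w_{|\be|-\ell,0} = \lag\xi\rag^{\ga(|\be|-\ell)}$; checking that $\mu^{\ell_0}$ can be absorbed with enough margin to cover the first-order $\xi$-growth from $\xi\cdot\na_x\phi$ and the velocity derivative in $\na_x\phi\cdot\na_\xi u$ uses precisely the constraint $\ell \geq 1 + \max\{N, \ell_0/2 - 1/\ga\}$ from Theorem \ref{thm.m}. A secondary, routine difficulty arising in \eqref{lem.made.2} is that up to $N-1$ spatial derivatives must be distributed by Leibniz across the quadratic source $G$ while keeping at least two in reserve for the Sobolev embedding of $\na_x\phi$; the hypothesis $N\geq 8$ leaves ample regularity for this step.
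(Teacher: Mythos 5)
Your proposal follows essentially the same route as the paper: Duhamel's formula, the observation that $\FP_0 G\equiv 0$ (so the neutrality hypothesis \eqref{thm.lide.1} is trivially met by each source slice and the Poisson-kernel singularity never appears), the linear decay rates $\si_0=3/4$ and $\si_1=5/4$ from Theorem \ref{thm.lide}, the bound of the weighted source norms by $\CE_{N,\ell-1}(s)\lesssim (1+s)^{-3/2}X_{N,\ell}(t)$, and the standard convolution inequalities. Your identification of where the constraint $\ell-1\geq \ell_0/2-1/\ga$ enters (absorbing $\mu^{\ell_0}$ and the extra factor $\lag\xi\rag$ into the algebraic weight $w_{|\be|-(\ell-1)}$) is exactly the point the paper makes after its display \eqref{lem.made.p4}.

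One step, as written, does not close: for the Poisson terms you propose to pull $\na_x\phi$ out in $L^\infty_x$, ``leaving a weighted norm of $u$.'' That works for the $L^2_{x,\xi}$ part of $\mathcal{N}_m$, but for the $Z_1=L^2_\xi(L^1_x)$ part it leaves $\|\mu^{\ell_0}\lag\xi\rag u\|_{L^2_\xi(L^1_x)}$ (resp.\ with $\na_\xi u$), and the $L^1_x$ norm of $u$ is not controlled by $X_{N,\ell}$ in the whole space. The paper instead applies H\"older in $x$ with $L^2_x\times L^2_x$, i.e.\ $\|\na_x\phi\|_{L^2_x}$ times the $L^2_x$ norm of the weighted $u$-factor, and then takes $L^2_\xi$; this yields $\CE_{N,\ell-1}(s)$ and hence the needed $(1+s)^{-3/2}$ decay. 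The same remark applies to your $\Ga(u,u)$ estimate in $Z_1$, where you do correctly invoke Cauchy--Schwarz in $x$; note, however, that the weighted $Z_1$ bound for the gain term $\Ga^+$ is not a direct consequence of the inner-product lemmas of Section \ref{sec4} and requires the separate case analysis over $\D_1,\D_2,\D_3$ carried out in the paper's proof of the claim \eqref{lem.made.p3}. With the $L^1_x$ issue repaired by the $L^2_x\times L^2_x$ splitting, your argument coincides with the paper's.
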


\begin{proof}
By Duhamel's principle, the solution $u$ to the Cauchy problem \eqref{vpb1}-\eqref{vpb3} of the nonlinear VPB system can be written as the mild form
\begin{equation*}
    u(t)=e^{t\FB}u_0 +\int_0^t e^{(t-s)\FB} G(s)\,ds
\end{equation*}
with $G=\frac{1}{2} \xi\cdot \na_x\phi u-\na_x\phi \cdot \na_\xi u +\Ga(u,u)$. Notice that $\FP_0 G(t)\equiv 0$ for all $t\geq 0$ and the condition \eqref{thm.lide.1} holds. Applying Theorem \ref{thm.lide}, one has
\begin{multline}\label{lem.made.p1}
\|u(t)\|+\|\na_x\phi(t)\|\\
\leq C(1+t)^{-\frac{3}{4}} \left\{\left\|\lag \xi\rag^{-\frac{\ga}{2}\ell_0}u_0\right\|_{Z_1}+\left\|\lag \xi \rag^{-\frac{\ga}{2}\ell_0} u_0\right\|+\|(1+|x|) a_0\|_{L^1_x}\right\}\\
+C \int_0^t (1+t-s)^{-\frac{3}{4}} \left\{\left\|\lag \xi \rag^{-\frac{\ga}{2}\ell_0} G(s)\right\|_{Z_1}+\left\|\lag \xi \rag^{-\frac{\ga}{2}\ell_0} G(s)\right\|\right\}\,ds
\end{multline}
and
\begin{multline}\label{lem.made.p2}
 \|\na_xu(t)\|_{L^2_\xi(H^{N-2}_x)}+\|\na_x^2\phi(t)\|\\
\leq C(1+t)^{-\frac{5}{4}} \left\{\left\|\lag \xi\rag^{-\frac{\ga}{2}\ell_0}u_0\right\|_{Z_1}+\left\|\lag \xi \rag^{-\frac{\ga}{2}\ell_0} \na_xu_0\right\|_{L^2_\xi(H^{N-2}_x)}+\|(1+|x|) a_0\|_{L^1_x}\right\}\\
+C \int_0^t (1+t-s)^{-\frac{5}{4}} \left\{\left\|\lag \xi \rag^{-\frac{\ga}{2}\ell_0} G(s)\right\|_{Z_1}+\left\|\lag \xi \rag^{-\frac{\ga}{2}\ell_0} \na_x G(s)\right\|_{L^2_\xi(H^{N-2}_x)}\right\}\,ds,
\end{multline}
where we recall that $\ell_0>2\si_1=2(\frac{3}{4}+\frac{1}{2})=5/2$ is a constant. In what follows, we shall estimate the right-hand time integrals in the above two inequalities.

To do that, we {\it claim} that
\begin{equation}\label{lem.made.p3}
  \left\|\lag \xi \rag^{-\frac{\ga}{2}\ell_0} G(t)\right\|_{Z_1}+\sum_{|\al|\leq N-1}\left\|\lag \xi \rag^{-\frac{\ga}{2}\ell_0} \pa^\al G(t)\right\|\leq C \CE_{N,\ell-1}(t),
\end{equation}
for any $0\leq t<T$. Here, recall  $\ell\geq  1+\max\left\{N, \frac{\ell_0}{2}-\frac1\ga\right\}$.
In fact, one has the following estimates.

\medskip

\noindent{\it Estimate on terms containing $\phi$:} Direct calculations yield
\begin{multline}\label{lem.made.p4}
\left\|\lag \xi \rag^{-\frac{\ga}{2}\ell_0} \left\{-\na_x\phi\cdot \na_\xi u +\frac{1}{2}\xi\cdot \na_x\phi u\right\}\right\|_{Z_1}\\
\leq C\left\|\lag \xi \rag^{-\frac{\ga}{2}\ell_0} \|\na_x \phi\|_{L^2_x} \left\{\|\na_\xi u\|_{L^2_x}+\lag \xi \rag \|u\|_{L^2_x}\right\}\right\|_{L^2_\xi}\\
\leq C \left\|\na_x\phi\right\|\left\{\left\|\lag \xi \rag^{-\frac{\ga}{2}\ell_0} \na_\xi u\right\| +\left\|\lag \xi \rag^{-\frac{\ga}{2}\ell_0 +1}u \right\|\right\}\\
\leq C  \|\na_x\phi\|\left\{\left\|w_{-\frac{1}{2}\ell_0}(t,\xi)\na_\xi u\right\|+\left\|w_{-\frac{\ell_0}{2}+\frac{1}{\ga}}(t,\xi) u\right\|\right\}.
\end{multline}
Here, by recalling the assumption $\ell-1>\frac{\ell_0}{2}-\frac{1}{\ga}$, \eqref{lem.made.p4} is further bounded by $C \CE_{N,\ell-1}(t)$.

Similarly, by applying $L^\infty_x$-norm to the low-order derivative terms and using Sobolev's inequality, one also has
\begin{multline*}
\sum_{|\al|\leq N-1}\left\|\lag \xi \rag^{-\frac{\ga}{2}\ell_0} \pa^\al\left\{-\na_x\phi\cdot \na_\xi u +\frac{1}{2}\xi\cdot \na_x\phi u\right\}\right\|\\
\leq C \|\na_x\phi\|_{H^{N}} \sum_{|\al|+|\be|\leq N} \left\|w_{|\be|-(\ell-1)}(t,\xi)\pa_\be^\al u\right\|\leq C\CE_{N,\ell-1}(t).
\end{multline*}

\medskip

\noindent{\it Estimate on terms containing $\Ga(u,u)$:} First consider
\begin{equation}\label{lem.made.p5-0}
   \left \|\lag \xi \rag^{-\frac{\ga}{2}\ell_0} \Ga(u,u)\right\|_{Z_1}\leq C\sum_{\pm}\left\|\lag \xi \rag^{-\frac{\ga}{2}\ell_0} \Ga^\pm\left(\|u\|_{L^2_x},\|u\|_{L^2_x}\right)\right\|_{L^2_\xi},
\end{equation}
where H\"{o}lder's inequality has been used. For the loss term, by noticing
\begin{multline*}
\Ga^-\left(\|u\|_{L^2_x},\|u\|_{L^2_x}\right)=\|u(\xi)\|_{L^2_x}\iint |\xi-\xi_\ast|^\ga q_0(\theta) \FM^{\frac{1}{2}}(\xi_\ast) \|u(\xi_\ast)\|_{L^2_x} d\xi_\ast d\om\\
\leq C \|u(\xi)\|_{L^2_x}\cdot \lag \xi \rag^\ga \sup_{\xi_\ast} \|u(\xi_\ast)\|_{L^2_x},
\end{multline*}
one has
\begin{multline}\label{lem.made.p6}
\left\|\lag \xi \rag^{-\frac{\ga}{2}\ell_0} \Ga^-\left(\|u\|_{L^2_x},\|u\|_{L^2_x}\right)\right\|_{L^2_\xi}\leq C \sup_{\xi_\ast} \|u(\xi_\ast)\|_{L^2_x}\cdot\left\|\lag \xi \rag^{\ga (-\frac{\ell_0}{2}+1)}u\right\|_{L^2_{x,\xi}}\\
\leq C \left(\|\na_\xi u\|+\|\na_\xi^2 u\|\right)\left\|w_{-\frac{\ell_0}{2}+1}(t,\xi) u\right\|\leq C\CE_{N,\ell-1}(t).
\end{multline}
For the gain term,
\begin{multline*}
J_1(t,\xi):=\lag \xi \rag^{-\frac{\ga}{2}\ell_0} \Ga^+\left(\|u\|_{L^2_x},\|u\|_{L^2_x}\right)\\
=\lag \xi \rag^{-\frac{\ga}{2}\ell_0}\iint |\xi-\xi_\ast|^\ga q_0(\theta) \FM^{\frac{1}{2}}(\xi_\ast)  \|u(\xi_\ast')\|_{L^2_x}\|u(\xi')\|_{L^2_x}\,d\xi_\ast d\om.
\end{multline*}
We then consider the following three cases:

\medskip
\noindent\underline{In $\D_1=\{|\xi_\ast|\geq \frac{1}{2}|\xi|\}$}:
\begin{equation*}
  J_1(t,\xi)\leq  \lag \xi \rag^{-\frac{\ga}{2}\ell_0}\int_{\xi_\ast,\om} |\xi-\xi_\ast|^\ga q_0(\theta) \FM^{\frac{q'}{2}}(\xi_\ast) \FM^{\frac{q'}{2}}(\xi)  \|u(\xi_\ast')\|_{L^2_x}\|u(\xi')\|_{L^2_x}.
\end{equation*}
From H\"{o}lder's inequality, as before, $J_1(t,\xi)$ is bounded by
\begin{equation*}
   C\lag \xi \rag^{-\frac{\ga}{2}\ell_0+\frac{1}{2}\ga}\left\{\int_{\xi_\ast,\om} |\xi-\xi_\ast|^\ga q_0(\theta) \FM^{\frac{q'}{2}}(\xi_\ast) \FM^{\frac{q'}{2}}(\xi)  \|u(\xi_\ast')\|_{L^2_x}^2\|u(\xi')\|_{L^2_x}^2\right\}^{\frac{1}{2}},
\end{equation*}
so that
\begin{multline*}
\|J_1(t)\chi_{\D_1}\|_{L^2_\xi}^2\leq C\int_{\xi,\xi_\ast,\om}  |\xi-\xi_\ast|^\ga q_0(\theta)\FM^{\frac{q''}{2}}(\xi_\ast) \FM^{\frac{q''}{2}}(\xi)  \|u(\xi_\ast')\|_{L^2_x}^2\|u(\xi')\|_{L^2_x}^2\\
\leq C \int_{\xi,\xi_\ast}  |\xi-\xi_\ast|^\ga\FM^{\frac{q''}{2}}(\xi_\ast) \FM^{\frac{q''}{2}}(\xi)  \|u(\xi_\ast)\|_{L^2_x}^2\|u(\xi)\|_{L^2_x}^2\\
\leq C \sup_{\xi_\ast} \|u(\xi_\ast)\|_{L^2_x}^2\cdot \int_{x,\xi}|u(x,\xi)|^2\leq C \CE_{N,\ell-1}(t)^2.
\end{multline*}
\underline{In $\D_2=\{|\xi_\ast|\leq \frac{1}{2}|\xi|,|\xi|\leq 1\}$}: From H\"{o}lder's inequality, one has
\begin{equation*}
   J_1(t,\xi)\chi_{\D_2}\leq \left\{\int_{\xi,\om}\chi_{\D_2} |\xi-\xi_\ast|^\ga q_0(\theta) \FM^{\frac{1}{2}}(\xi_\ast) \|u(\xi_\ast')\|_{L^2_x}^2\|u(\xi')\|_{L^2_x}^2\right\}^{\frac{1}{2}}.
\end{equation*}
In $\D_2$, $|\xi-\xi_\ast|\geq \frac{1}{2}|\xi|$ and moreover,
\begin{equation*}
    |\xi'|\leq 2|\xi|+|\xi_\ast|\leq \frac{5}{2}|\xi|,\quad |\xi_\ast'|\leq 2|\xi_\ast|+|\xi|\leq 2|\xi|.
\end{equation*}
These imply that in  $\D_2$,
\begin{equation*}
   |\xi-\xi_\ast|^\ga\leq C \min\{|\xi'|^\ga,|\xi_\ast'|^\ga\} \chi_{|\xi'|\leq 3,|\xi_\ast'|\leq 3 }.
\end{equation*}
Then, using the above estimate and then taking change of variable $(\xi,\xi_\ast)\to (\xi',\xi_\ast')$, one has
\begin{multline*}
\|J_1(t)\chi_{\D_2}\|_{L^2_\xi}^2\leq C \int_{\xi,\xi_\ast,\om}\chi_{\D_2}  |\xi-\xi_\ast|^\ga q_0(\theta) \FM^{\frac{1}{2}}(\xi_\ast)    \|u(\xi_\ast')\|_{L^2_x}^2\|u(\xi')\|_{L^2_x}^2\\
\leq C \int_{\xi,\xi_\ast}\chi_{|\xi|\leq 3,|\xi_\ast|\leq 3}\min\{|\xi|^\ga,|\xi_\ast|^\ga\} \|u(\xi_\ast)\|_{L^2_x}^2\|u(\xi)\|_{L^2_x}^2\\
\leq C \sup_{\xi}\|u(\xi)\|_{L^2_x}^2\cdot \|u\|^2\leq C \CE_{N,\ell-1}(t)^2.
\end{multline*}
\underline{In $\D_3=\{|\xi_\ast|\leq \frac{1}{2}|\xi|,|\xi|\geq 1\}$}: In this case, as in the proof \eqref{lem.ga2.p2-0} for $I_{2,123}^+$, from H\"{o}lder's inequality, we can deduce that
 $J_1(t,\xi)$ is bounded by
\begin{equation*}
   C\lag \xi \rag^{-\frac{\ga}{2}\ell_0}\left\{\int_{\xi_\ast,\om} \left(1+|\xi|^2+|\xi_\ast|^2\right)^\ga q_0(\theta) \FM^{\frac{1}{2}}(\xi_\ast) \|u(\xi_\ast')\|_{L^2_x}^2\|u(\xi')\|_{L^2_x}^2\right\}^{\frac{1}{2}},
\end{equation*}
which further implies that $\|J_1(t)\chi_{\D_3}\|_{L^2_\xi}^2$ is bounded by
\begin{multline*}
C \int_{\xi,\xi_\ast,\om} \lag \xi\rag^{-\ga \ell_0}  \left(1+|\xi|^2+|\xi_\ast|^2\right)^\ga q_0(\theta) \FM^{\frac{1}{2}}(\xi_\ast) \|u(\xi_\ast')\|_{L^2_x}^2\|u(\xi')\|_{L^2_x}^2\\
\leq C \int_{\xi,\xi_\ast,\om} \left\{\lag \xi'\rag^{-\ga \ell_0}+\lag \xi_\ast'\rag^{-\ga \ell_0}\right\}  \left(1+|\xi|^2+|\xi_\ast|^2\right)^\ga q_0(\theta) \FM^{\frac{1}{2}}(\xi_\ast) \|u(\xi_\ast')\|_{L^2_x}^2\|u(\xi')\|_{L^2_x}^2\\
\leq C\int_{\xi,\xi_\ast} \left\{\lag \xi\rag^{-\ga \ell_0}+\lag \xi_\ast\rag^{-\ga \ell_0}\right\}  \left(1+|\xi|^2+|\xi_\ast|^2\right)^\ga \|u(\xi_\ast)\|_{L^2_x}^2\|u(\xi)\|_{L^2_x}^2.
\end{multline*}
Thus, it follows that
\begin{multline*}
\|J_1(t)\chi_{\D_3}\|_{L^2_\xi}^2\leq C \int \lag \xi\rag^{-\ga \ell_0+\ga}\|u(\xi)\|_{L^2_x}^2d\xi \cdot \int \|u(\xi_\ast)\|_{L^2_x}^2d\xi_\ast\\
+C \int \lag \xi_\ast\rag^{-\ga \ell_0+\ga}\|u(\xi_\ast)\|_{L^2_x}^2d\xi_\ast \cdot \int \|u(\xi)\|_{L^2_x}^2d\xi\\
\leq C \left\|w_{-\frac{\ell_0}{2}+\frac{1}{2}}(t,\xi) u\right\|^2\|u\|^2\leq C \CE_{N,\ell-1}(t)^2.
\end{multline*}
Collecting the above estimates,  one has the estimate on the gain term
\begin{equation*}
    \left\|\lag \xi \rag^{-\frac{\ga}{2}\ell_0} \Ga^+\left(\|u\|_{L^2_x},\|u\|_{L^2_x}\right)\right\|_{L^2_\xi}\leq C\|J_1(t)\|_{L^2_\xi}\leq C \CE_{N,\ell-1}(t).
\end{equation*}
Combining the above with \eqref{lem.made.p6} and recalling \eqref{lem.made.p5-0}, we have
\begin{equation}\label{lem.made.p7}
     \left\|\lag \xi \rag^{-\frac{\ga}{2}\ell_0} \Ga(u,u)\right\|_{Z_1}\leq C\CE_{N,\ell}(t).
\end{equation}
Now, we consider the $L^2$-norm of $\lag \xi \rag^{-\frac{\ga}{2}\ell_0}  \pa^\al\Ga(u,u)$ with $|\al|\leq N-1$. Notice
\begin{multline*}
\sum_{|\al|\leq N-1}\left\|\lag \xi \rag^{-\frac{\ga}{2}\ell_0} \pa^\al\Ga(u,u)\right\|
\leq C \sum_{|\al|\leq N-1}\sum_{\al_1+\al_2=\al}\sum_\pm \left\|\lag \xi \rag^{-\frac{\ga}{2}\ell_0} \Ga^\pm\left(\pa^{\al_1}u,\pa^{\al_2}u\right)\right\| \\
\leq C \sum_{|\al|\leq N-1}\sum_{\al_1+\al_2=\al, |\al_1|\leq N/2}\sum_\pm \left\|\lag \xi \rag^{-\frac{\ga}{2}\ell_0} \Ga^\pm\left(\|\na_x\pa^{\al_1}u\|_{H^1_x},\|\pa^{\al_2}u\|_{L^2_x}\right)\right\|_{L^2_\xi}\\
+C \sum_{|\al|\leq N-1}\sum_{\al_1+\al_2=\al, |\al_1|> N/2}\sum_\pm \left\|\lag \xi \rag^{-\frac{\ga}{2}\ell_0} \Ga^\pm\left(\|\pa^{\al_1}u\|_{L^2_x},\|\na_x\pa^{\al_2}u\|_{H^1_x}\right)\right\|_{L^2_\xi},
\end{multline*}
where Sobolev's inequality $\|f\|_{L^\infty_x}\leq C\|\na_x f\|_{H^1_x}$ has been used. Then, as for \eqref{lem.made.p5-0}, one has
\begin{equation*}
  \sum_{|\al|\leq N-1}\left\|\lag \xi \rag^{-\frac{\ga}{2}\ell_0} \pa^\al\Ga(u,u)\right\|\leq C   \CE_{N,\ell-1}(t).
\end{equation*}
This together with \eqref{lem.made.p4}, \eqref{lem.made.p5-0} and \eqref{lem.made.p7} give \eqref{lem.made.p3}. \qed

\medskip

Hence, \eqref{lem.made.p3} together with \eqref{def.x} imply
\begin{multline*}
  \left\|\lag \xi \rag^{-\frac{\ga}{2}\ell_0} G(s)\right\|_{Z_1}+\sum_{|\al|\leq N-1}\left\|\lag \xi \rag^{-\frac{\ga}{2}\ell_0} \pa^\al G(s)\right\|\leq C   \CE_{N,\ell-1}(s)\\
  \leq C(1+s)^{-\frac{3}{2}} \sup_{0\leq s\leq t}(1+s)^{\frac{3}{2}}\CE_{N,\ell-1}(s)\leq C  (1+s)^{-\frac{3}{2}}
  X_{N,\ell}(t),
\end{multline*}
for any $0\leq s\leq t<T$.
Further plugging the above estimate into \eqref{lem.made.p1} and \eqref{lem.made.p2} yields \eqref{lem.made.1} and \eqref{lem.made.2}, respectively, where the following inequalities
\begin{eqnarray*}
 && \int_0^t (1+t-s)^{-\frac{3}{4}} (1+s)^{-\frac{3}{2}}\,ds\leq C(1+t)^{-\frac{3}{4}},\\
 && \int_0^t (1+t-s)^{-\frac{5}{4}} (1+s)^{-\frac{3}{2}}\,ds\leq C(1+t)^{-\frac{5}{4}},
\end{eqnarray*}
together with \eqref{def.e} and \eqref{lem.ee.p10} for the definition of $\CE_{N,\ell,q}$ have been used.
This completes the proof of Lemma \ref{lem.made}.
\end{proof}

\noindent{\bf Proof of Lemma \ref{lem.X}:} We divide it by three steps. Recall \eqref{def.x} for $ X_{N,\ell,q}(t)$.

\medskip
\noindent{\bf Step 1.} From Lemma \ref{lem.ee},
the time integration of \eqref{lem.ee.1} directly gives
\begin{equation}\label{lem.X.pp4}
  \sup_{0\leq s\leq t}\CE_{N,\ell,q}(s)\leq \CE_{N,\ell,q}(0)\leq C\eps_{N,\ell,q}^2.
\end{equation}
 Here, noticing $\De_x\phi(0,x)=a_0(x)$, we have removed $\|\na_x\phi(0,x)\|_{H^N}^2$ from $\CE_{N,\ell,q}(0)$ by the definition of \eqref{def.tri} or \eqref{def.e} in terms of the inequalities
$$
\|\na_x\phi_0\| \leq C \|a_0\|_{L^1_x}^{2/3}\|a_0\|_{L^2_x}^{1/3}
$$
and
\begin{equation*}
   \left \|\na_x^2\phi_0\right\|_{H^{N-1}}= \left\|\na_x^2\De_x^{-1}a_0\right\|_{H^{N-1}}\leq C \|a_0\|_{H^{N-1}}.
\end{equation*}
Therefore, $\CE_{N,\ell,q}(0)\leq C\eps_{N,\ell,q}^2$ holds true with $\eps_{N,\ell,q}$ given by \eqref{lem.X.3}.

\medskip
\noindent{\bf Step 2.}
Take $0<\eps<1/2$ small enough. Recall \eqref{lem.ee.1}. Notice that \eqref{lem.ee.1} also holds true when $\ell$ is replaced by $\ell-1$ since all the conditions of Lemma \ref{lem.ee} are still satisfied under the assumption that $\ell\geq 1+ N$ and $\sup_{0\leq s<T}X_{N,\ell,q}(s)\leq \de$ with $\de>0$ small enough. Thus, it holds that
\begin{equation*}
     \frac{d}{dt}\CE_{N,\ell-1,q}(t)+\kappa \CD_{N,\ell-1,q}(t)\leq 0.
\end{equation*}
Multiplying the above inequality by $(1+t)^{3/2+\eps}$ gives
\begin{multline}\label{lem.X.pp1}
    \frac{d}{dt}\left[(1+t)^{\frac{3}{2}+\eps} \CE_{N,\ell-1,q}(t)\right]+\kappa (1+t)^{\frac{3}{2}+\eps}\CD_{N,\ell-1,q}(t)\\
    \leq \left(\frac{3}{2}+\eps\right)(1+t)^{\frac{1}{2}+\eps} \CE_{N,\ell-1,q}(t).
\end{multline}
Similarly, from \eqref{lem.ee.1} with $\ell$ replaced by $\ell-1/2$ and further multiplying it by $(1+t)^{1/2+\eps}$, one has
\begin{multline}\label{lem.X.pp2}
\frac{d}{dt}\left[(1+t)^{\frac{1}{2}+\eps} \CE_{N,\ell-\frac{1}{2},q}(t)\right]+\kappa (1+t)^{\frac{1}{2}+\eps}\CD_{N,\ell-\frac{1}{2},q}(t)\\
\leq \left(\frac{1}{2}+\eps\right)(1+t)^{-\frac{1}{2}+\eps} \CE_{N,\ell-\frac{1}{2},q}(t)\leq C\CE_{N,\ell-\frac{1}{2},q}(t).
\end{multline}
Observe from \eqref{def.tri}, \eqref{def.e} and \eqref{def.ed} that
\begin{equation*}
  \CD_{N,\tilde{\ell},q}(t)+ \|(b,c)(t)\|^2+\|\na_x\phi(t)\|^2\geq  \kappa \CE_{N,\tilde{\ell}-\frac{1}{2},q}(t)
\end{equation*}
holds for any given $\tilde{\ell}$. Then,
from taking the time integration over $[0,t]$ of \eqref{lem.X.pp1}, \eqref{lem.X.pp2} and  \eqref{lem.ee.1} and further taking the appropriate linear combination, one has
\begin{multline*}
(1+t)^{\frac{3}{2}+\eps} \CE_{N,\ell-1,q}(t)\leq C \CE_{N,\ell,q}(0)\\
+C\int_0^t (1+s)^{\frac{1}{2}+\eps}\left\{\|(b,c)(s)\|^2+\|\na_x\phi(s)\|^2\right\}\,ds.
\end{multline*}
Here, applying the first estimate \eqref{lem.made.1} in Lemma \ref{lem.made} to the right-hand time integral term of the above inequality and noticing
\begin{equation*}
    \int_0^t  (1+s)^{\frac{1}{2}+\eps}(1+s)^{-\frac{3}{2}}\,ds\leq C(1+t)^\eps,
\end{equation*}
it follows that
\begin{equation*}
    (1+t)^{\frac{3}{2}+\eps} \CE_{N,\ell-1,q}(t)\leq C \CE_{N,\ell,q}(0)+C(1+t)^{\eps} \left\{\eps_{N,\ell}^2+X_{N,\ell}(t)^2\right\},
\end{equation*}
which implies
\begin{equation}\label{ap.td1}
    \sup_{0\leq s\leq t} (1+s)^{\frac{3}{2}} \CE_{N,\ell-1,q}(s)\leq C \left\{\eps_{N,\ell,q}^2+X_{N,\ell}(t)^2\right\}
\end{equation}
holds for any $0\leq t<T$.  Here, $\CE_{N,\ell,q}(0)\leq C\eps_{N,\ell,q}^2$ has been used.

\medskip
\noindent{\bf Step 3.} Notice
\begin{equation*}
   \|\na_x^2\phi \|_{H^{N-1}}^2 \leq C\left\{\|\na_x^2\phi \|^2+\|\na_x a \|_{H^{N-2}}^2\right\}\leq C\left\{\|\na_x^2\phi \|^2+\|\na_x u \|_{L^2_\xi(H^{N-2}_x)}^2\right\}.
\end{equation*}
Then, it is direct to deduce from \eqref{lem.made.2}
\begin{equation}\label{ap.tdh}
\sup_{0\leq s\leq t} (1+s)^{\frac{5}{2}}\|\na_x^2\phi (s)\|_{H^{N-1}}^2\leq C \left\{\eps_{N,\ell}^2+X_{N,\ell}(t)^2\right\}.
\end{equation}
Hence, the desired estimate \eqref{lem.X.2} follows by summing  \eqref{lem.X.pp4}, \eqref{ap.td1} and \eqref{ap.tdh}. This completes the proof of Lemma \ref{lem.X}. \qed

\medskip

Now, we are in a position to give the

\medskip

\noindent{\bf Proof of Theorem \ref{thm.m}:} Given $-2\leq \ga<0$, we fix $N$, $\ell$ and $q(t)=q+\frac{\la}{(1+t)^\vth}$ as stated in Theorem \ref{thm.m}.
The local existence and uniqueness of the solution $u(t,x,\xi)$ to the Cauchy problem \eqref{vpb1}-\eqref{vpb3} can be proved in terms of the energy functional $\CE_{N,\ell,q}(t)$ given by \eqref{def.e}, and the details are omitted for simplicity; see \cite{Guo2}. Then, one only has to obtain the uniform-in-time estimate over $0\leq t<T$ with $0<T\leq \infty$. In fact, by the continuity argument, Lemma \ref{lem.X} implies that under the {\it a priori} assumption \eqref{apa} for $\de>0$ small enough, one has
\begin{equation}\label{thm.g.p1}
X_{N,\ell,q}(t)\leq C \eps_{N,\ell,q}^2, \quad 0\leq t<T,
\end{equation}
provided that $ \eps_{N,\ell,q}$ defined by \eqref{lem.X.3} is sufficiently small. Recalling the condition \eqref{thm.ge.1} for initial data $u_0$ which coincides with \eqref{lem.X.3}, the {\it a priori} assumption \eqref{apa} can be closed. Then, the global existence follows, and \eqref{thm.ge.2} holds true from \eqref{thm.g.p1}, \eqref{def.x}, \eqref{def.e} and \eqref{def.tri}. This completes the proof of Theorem \ref{thm.m}. \qed

\medskip
\noindent
{\bf Acknowledgements:}
The research of the first author was supported partially by the Direct Grant 2010/2011 from CUHK and by the General Research Fund (Project No.~400511) from RGC of Hong Kong.
The research of the second author was supported by the General Research Fund of Hong Kong, CityU No.103108, and the Croucher Foundation. And research of the third author was supported by the grants
from the National Natural Science Foundation of China under
contracts 10871151 and 10925103. This work is also
supported by ``the Fundamental Research Funds for the Central
Universities".

\bigskip


\end{document}